\newcommand{\R}{\mathbb{R}}
\newcommand{\Z}{\mathbb{Z}}
\renewcommand{\leq}{\leqslant}
\renewcommand{\geq}{\geqslant}
\renewcommand{\phi}{\varphi}
\newcommand{\hf}{{\unitfrac{1}{2}}}
\renewcommand{\epsilon}{\varepsilon}
\newcommand{\bx}{\mathbf{x}}
\newcommand{\diff}{\textrm{d}}
\newcommand{\rhostat}{\rho_\infty}
\newcommand{\brho}{{\bar{\rho}}}
\newcommand{\norme}[1]{\left\lVert#1\right\rVert}
\DeclareMathOperator{\sgn}{sgn}
\DeclareMathOperator\erf{erf}
\newtheorem{theorem}{Theorem}
\newtheorem{proposition}[theorem]{Proposition}
\newtheorem{remark}[theorem]{Remark}
\newtheorem{definition}[theorem]{Definition}
\newtheorem{lemma}[theorem]{Lemma}
\newtheorem{corollary}[theorem]{Corollary}
\numberwithin{equation}{section}
\numberwithin{theorem}{section}
\def\@cite#1#2{\textup{[{#1\if@tempswa , #2\fi}]}}
\title{Well-posedness and stability of a stochastic neural field in the form of a partial differential equation}
\author{José Antonio Carrillo\thanks{Mathematical Institute, University of Oxford, Oxford OX2 6GG, UK (carrillo@maths.ox.ac.uk)} \and Pierre Roux\thanks{Mathematical Institute, University of Oxford, Oxford OX2 6GG, UK (pierre.roux@maths.ox.ac.uk)} \and Susanne Solem\thanks{Department of Mathematics, Norwegian University of Life Sciences, NO-1433 \AA s, Norway (susanne.solem@nmbu.no)}}
\begin{document}

\maketitle

\begin{abstract}
    A system of partial differential equations representing stochastic neural fields was recently proposed with the aim of modelling the activity of noisy grid cells when a mammal travels through physical space. The system was rigorously derived from a stochastic particle system and its noise-driven pattern-forming bifurcations have been characterised. However, due to its nonlinear and non-local nature, standard well-posedness theory for smooth time-dependent solutions of parabolic equations does not apply. In this article, we transform the problem through a suitable change of variables into a nonlinear Stefan-like free boundary problem and use its representation formulae to construct local-in-time smooth solutions under mild hypotheses. Then, we prove that fast-decaying initial conditions and globally Lipschitz modulation functions imply that solutions are global-in-time. Last, we improve previous linear stability results by showing nonlinear asymptotic stability of stationary solutions under suitable assumptions.
\end{abstract}

\vspace{0.5cm}

\section{Introduction}

Since their introduction \cite{WC1,WC2,A}, the study of neural field models has played a crucial role in the understanding of many cortical phenomena; see the reviews \cite{B,C,TWPC} for descriptions of the extensive literature. It has been argued that noise plays a pivotal role in the formation of complex patterned firing activity \cite{RD}. In the last years, numerous works have focused on stochastic neural field models and how noise strength affects the apparition and the evolution of patterns like travelling waves, wandering bumps, multistable equilibria and bifurcation branches (see for instance \cite{KE,MB,BK,B2,KS,CEK,CRS,KT} and the review \cite[Sec. 6]{B}).

The aim of this manuscript is to provide a mathematical framework for the study of solutions of a recent neural field model in the form of a nonlinear Fokker--Planck equation:
\begin{equation}\label{eq:PDE}
	\tau_c\dfrac{\partial \rho}{\partial t} = -\dfrac{\partial }{\partial s}\left[  \left(  \Phi\left( \int_{\mathbb{T}^d} W(x-y)\int_0^{+\infty}s\rho(y,s,t)d s d y + B(t)\right) -s  \right)\rho(x,s,t)   \right] + \sigma \dfrac{\partial^2 \rho}{\partial s^2},
\end{equation}
where $\rho(x,s,t)$ is at time $t$ the probability density of neurons at location $x\in \mathbb T^d := \R^d /( L \Z)^d$, $L\in\R_+^*$, with firing activity level $s\in[0, +\infty)$. The interaction between neurons at different space locations happens through a connectivity kernel $W\in L^1(\mathbb T^d)$ and a continuous modulation function $\Phi$. The external input to the system is denoted $B(t)$, and formal mass conservation and positivity of the activity variable are enforced with a no-flux boundary condition: for all $(x,t)\in\mathbb T^d\times\R_+$,
\begin{equation}\label{eq:BC}
	\left( \Phi\left( \int_{\mathbb T^d} W(x-y)\int_0^{+\infty}s\rho(y,s,t)d s d y + B(t)\right)\rho(x,s,t) - \sigma \dfrac{\partial \rho}{\partial s}(x,s,t) \right)\bigg|_{s=0} = 0.
\end{equation}
Last, we assume that neurons are homogeneously distributed in space, which is imposed by the condition
\begin{equation}\label{eq:homogeneousBrain}  
\forall t\in\R_+, \ \forall x\in\mathbb T^d, \quad \int_0^{+\infty} \rho(x,s,t) ds = \dfrac{1}{L^d}. 
\end{equation}
 
This type of nonlinear Fokker--Planck system was introduced in \cite{CHS} as a way to study the impact of noise upon networks of noisy grid cells. Grid cells are a specific type of neuron that plays a role in the navigational system of mammals; it has been shown that they exhibit a typical hexagonal firing pattern \cite{HFMMM,RYMM,MMB} in physical space. With a generalisation of \eqref{eq:PDE} (that we expose in more details in Section \ref{sec:4}), the authors of \cite{CHS} were able to numerically explain the apparition of neural firing patterns as noise-driven bifurcations from a space-homogeneous stationary state and provided a study of the stability of the linearised equations. The bifurcation analysis was then made rigorous in \cite{CRS} for both system \eqref{eq:PDE} and its generalisation. Furthermore, the systems were derived as mean-field limits of particle systems in \cite{CCS}, and the fluctuations of the empirical measure associated to the particle systems around their mean-field limits were proved to converge to the solution of a Langevin SPDE in \cite{C2}.

Systems of the form \eqref{eq:PDE} are challenging to analyse because of the non-local nonlinear drift and the nonlinear Robin-like boundary condition. Although the mean-field limit in \cite{CCS} provides the existence of weak solutions, there is to our knowledge no existing mathematical framework for the time evolution of classical solutions. In this work, we build on methods developed for the study of other nonlinear Fokker--Planck equations in neuroscience \cite{CGGS,CCP,CPSS, CRSS}: first, we use classical change of variables to reformulate the problem as a nonlinear Stefan-like free boundary problem in order to construct local and global in time solutions, and obtain representation formulae and universal estimates as a by-product (Section \ref{sec:2}); second, we use a generalised relative entropy method and insights from the linear stability proof of \cite{CHS} to prove results on the asymptotic behaviour of solutions (Section \ref{sec:3}).

Let us be more precise about the notion of solution we deal with, and the main results we show. Our goal is to show that continuous initial conditions with appropriate decay lead to classical solutions which are continuous in space, $C^2$ in activity and $C^1$ in time, and to propose representation formulae and criteria for global-in-time existence. Throughout this work, we use the notation $\R_+=[0,\infty)$ and $\R_+^*=(0,\infty)$. The space of continuous functions (resp. $C^k$ functions) from a space $X$ to a space $Y$ is denoted by $C^0(X,Y)$ (resp. $C^k(X,Y)$). When $Y = \R$, we simply denote $C^k(X)$; we use the same convention for Lebesgue spaces $L^p(X,Y)$ and $L^p(X)$. When regularity is uneven between different variables, we write for example $C^{K,l,m}(X\times Y\times Z)$ to tell $C^k$ on $X$, $C^l$ on $Y$, $C^m$ on $Z$ and $C^{\min(k,l,m)}$ on $X\times Y\times Z$.

Since \eqref{eq:PDE}-\eqref{eq:BC} involves the average in $s$ and due to specific technicalities, we require decay on the initial conditions which we then can propagate on the solutions.

\begin{definition}(Fast decay at $s=+\infty$)
    Let $a\in\R$. We say that a function $\rho : \mathbb T^d \times [a,+\infty) \to \R$, is fast-decaying if for all polynomial functions $P$ of one variable, for all $x\in\mathbb T^d$,
    \[  \lim_{s\to +\infty} \rho(x,s) P(s) = 0. \]
\end{definition}

\begin{definition}[Compatible initial condition]\label{def:compatible}
    We say that the function $\rho^0\in C^{0,1}(\mathbb T^d \times \R_+)$ is a compatible initial condition for \eqref{eq:PDE} if $\rho^0$ is a non-negative probability density, if $\rho^0$ and $\partial_s\rho^0$ are fast-decaying,
    and if
    \begin{equation*} \forall x\in\mathbb T^d, \quad \int_0^{+\infty} \rho^0(x,s)ds = \dfrac{1}{L^d}.  \end{equation*}
\end{definition}

\begin{remark}[Non-optimality of the existence results]
    We want to emphasise that the regularity and the decay of the solutions we construct are by no means optimal. Optimal regularity for this problem is beyond the scope of this work. With higher regularity on $W$, $\Phi$ and $B$, the parabolic nature of the equation and the smoothing effect of the convolution should lead to more regularity even with non-smooth initial conditions. 
    
    Concerning the decay assumptions, biologically relevant solutions do not have fat tails near infinite activity, and higher order moments can be useful to obtain estimates.  Therefore, although it is not necessary for construction of solutions, we choose to work with fast-decaying initial conditions.
    
    The condition \eqref{eq:homogeneousBrain} is not necessary for existence of solutions, but it simplifies the proof and it will be used in later estimates. From a neuroscience perspective, it is tantamount to assume a homogeneous repartition of the neurons in the brain area we model.
\end{remark}

\begin{definition}[Classical solution]\label{def:classical}
    We say that a non-negative function $\rho\in C^{0}(\mathbb T^d\times\R_+\times [0,T_{\mathrm{max}}))$, $T_{\mathrm{max}}\in [0,+\infty]$ is a classical solution of \eqref{eq:PDE}-\eqref{eq:BC} if    
    \begin{itemize}
        \item it has regularity 
        \begin{align*}
        \rho\in \, &C^{0,2,1}(\mathbb T^d\times\R_+^*\times (0,T_{\mathrm{max}}))\cap C^{0,1,1}(\mathbb T^d\times\R_+\times (0,T_{\mathrm{max}})),
    \end{align*}
    and satisfies \eqref{eq:PDE}-\eqref{eq:BC} in the classical sense;
        \item for all $t\in[0,T_{\mathrm{max}})$, $\rho(\cdot,\cdot,t)$, $\partial_s \rho(\cdot,\cdot,t)$ are fast-decaying, and $\int_{\mathbb T^d}\int_{0}^{+\infty} \rho(x,s,t) dsdx = 1 $;
        \item  for all $(x,t)\in\mathbb T^d\times [0,T_{\mathrm{max}})$, 
    \[ 
    \int_{0}^{+\infty} \rho(x,s,t)ds = \dfrac{1}{L^d}.
    \]
    \end{itemize}
    
\end{definition}

Section \ref{sec:2} is devoted to proving the different claims of the following theorem.

\begin{theorem}[Existence and uniform bounds]\label{thm:main}
    Assume $\Phi$ is locally Lipschitz, $B\in C^0(\R_+)$ and $W\in L^1(\mathbb T^d)$. For any compatible initial condition $\rho^0$, \eqref{eq:PDE}-\eqref{eq:BC} admits a unique maximal solution in the sense of Definition \ref{def:classical}.
    Moreover, if $\Phi$ is globally Lipschitz,
    \begin{itemize}
        \item the solution is global-in-time;
        \item if $\Phi$ is non-negative, then
    \begin{equation*}
    \forall t\in\R_+,\ \forall x\in\mathbb T^d, \quad \rho(x,0,t) \leqslant \dfrac{1}{L^d}\sqrt{ \dfrac{2}{ \pi \sigma } } \dfrac1{\sqrt{1-e^{-\frac{2t}{\tau_c}}}};
    \end{equation*}
    \item if $\Phi$ is non-positive, then
        \[ \forall t\in \R_+,\ \forall x\in \mathbb T^d, \quad  \bar\rho(x,t) \leqslant  \dfrac{1}{L^d}(1+\sigma - \sigma e^{-\frac{2t}{\tau_c}}) +  e^{-\frac{2t}{\tau_c}} \sup_{x\in\mathbb T^d}\int_0^{+\infty} s^2\rho^0(x,s)ds . \]
    \end{itemize}
\end{theorem}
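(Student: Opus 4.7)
The plan is to follow the roadmap suggested in the introduction: freeze the nonlocal drift $J(x,t) := \Phi(W*\bar\rho(\cdot,t)(x)+B(t))$, reduce the resulting linear problem to a heat equation on a moving domain, derive an explicit representation formula, and then close the nonlinear problem by a fixed-point argument.

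For a given continuous candidate drift $J^*$, set $m(x,t)=\tau_c^{-1}\int_0^t e^{r/\tau_c}J^*(x,r)\,dr$ and perform the change of variables
\[
y = s\,e^{t/\tau_c} - m(x,t),\qquad \rho(x,s,t) = e^{t/\tau_c}\,u(x,y,t),
\]
followed by the time rescaling $\theta=\tfrac{1}{2}(e^{2t/\tau_c}-1)$. A direct computation turns \eqref{eq:PDE} into the pure heat equation $u_\theta=\sigma u_{yy}$ on the moving half-line $y>-m(x,t(\theta))$, while the no-flux condition \eqref{eq:BC} becomes a Stefan-type condition $\sigma u_y+u\,\dot y_b=0$ at the free boundary $y_b=-m$. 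Solving this linear problem by the method of images produces an integral representation for $u$ in terms of the initial datum and of $J^*$. A Picard iteration on the map $J^*\mapsto \Phi(W*\bar\rho[J^*]+B)$ in $C^0(\mathbb T^d\times[0,T])$ is then a contraction for small $T$, thanks to Young's inequality $\|W*\bar\rho\|_\infty\leq \|W\|_{L^1}\|\bar\rho\|_\infty$, the local Lipschitz property of $\Phi$, and the fast-decay of $\rho^0$ (which ensures finiteness of $\bar\rho$ through the explicit kernel). Local existence, uniqueness and the definition of $T_{\mathrm{max}}$ follow; the regularity listed in Definition \ref{def:classical} is inherited from smoothness of the heat kernel together with the integral representation.

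For the global statements with $\Phi$ globally Lipschitz, the key ingredient is the moment identity obtained by multiplying \eqref{eq:PDE} by $s^2$, integrating over $\R_+$ and using fast decay and \eqref{eq:BC}:
\[
\tau_c\,\partial_t m_2(x,t) \;=\; 2\,\Phi\bigl(W*\bar\rho+B\bigr)\,\bar\rho(x,t) \;-\; 2\,m_2(x,t) \;+\; \tfrac{2\sigma}{L^d},
\]
with $m_2:=\int_0^\infty s^2\rho\,ds$ and using the normalisation $\int\rho\,ds=1/L^d$. The elementary splitting $\bar\rho\leq 1/L^d+m_2$ (at $s=1$) together with the linear growth of $\Phi$ closes a Gronwall bound on $m_2$, ruling out blow-up and giving $T_{\mathrm{max}}=+\infty$. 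When $\Phi\leq 0$, the drift-related term is non-positive, so $\tau_c\partial_t m_2\leq -2m_2+2\sigma/L^d$ integrates to $m_2(x,t)\leq e^{-2t/\tau_c}m_2(x,0)+(\sigma/L^d)(1-e^{-2t/\tau_c})$, and substituting into $\bar\rho\leq 1/L^d+m_2$ yields the announced estimate. When $\Phi\geq 0$, one has $m(x,t)\geq 0$, so the Stefan boundary $y_b=-m$ only retracts from $0$; by the maximum principle, $u$ is dominated by the solution of the reflected heat equation on the fixed half-line $\{y>0\}$ with Neumann condition and mass $1/L^d$, yielding the uniform bound $u(y,\theta)\leq \tfrac{1}{L^d\sqrt{\pi\sigma\theta}}$. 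Inserting $\rho(x,0,t)=e^{t/\tau_c}u(x,-m,\theta)$ and $\theta=\tfrac{1}{2}(e^{2t/\tau_c}-1)$ gives exactly the stated upper bound on $\rho(x,0,t)$.

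The main obstacle is the coupling between the free boundary $y_b=-m$ and the unknown itself: both the boundary location and the integrand in the image-method formula depend on the still-unknown drift $J^*$, so the representation formula is in fact implicit. Quantifying the Lipschitz dependence of $u$ (and of the first moment $\bar\rho$) on $J^*$ through both the moving boundary and the reflected kernel — uniformly with respect to the fast-decaying initial data — is the technical heart of the proof.
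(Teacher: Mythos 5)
Your local-existence architecture is essentially the paper's: the change of variables $y=se^{t/\tau_c}-m$, $\theta=\tfrac12(e^{2t/\tau_c}-1)$ is exactly the composition of the self-similar and characteristic changes of Section \ref{sec:2}, and the Picard iteration on the frozen drift $J^*$ is a reorganisation of the paper's fixed point on the pair (boundary trace, first moment), with the free boundary handled as an auxiliary ODE. You also correctly identify the real technical burden (Lipschitz dependence of the representation formula on the drift through the moving boundary), and your $\Phi\leq 0$ bound via the second-moment identity is the paper's argument verbatim. The gaps are in the global-in-time step. First, the Gronwall does not close as you set it up: with the splitting $\bar\rho\leq 1/L^d+m_2$ and $\Phi$ globally Lipschitz you get $|\Phi_{\bar\rho}|\lesssim 1+\sup_x m_2$, so the source term $2\Phi_{\bar\rho}\,\bar\rho$ in the moment identity is quadratic in $\sup_x m_2$, and the resulting Riccati inequality $\dot Y\leq C(1+Y)^2$ is compatible with finite-time blow-up. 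You need a linear closure: either Cauchy--Schwarz, $\bar\rho\leq L^{-d/2}m_2^{1/2}$, or the paper's truncated test function $h$ with $h(s)=s$ for $s\geq 1$ and $h(0)=h'(0)=0$, whose bounded derivative makes the drift contribution linear in $\norme{\bar\rho}_\infty$.

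Second, a bound on $m_2$ (hence on $\bar\rho$ and the drift) is not by itself a continuation criterion: the blow-up alternative from your fixed point also involves the boundary trace $\rho(x,0,t)$, and you must convert the mean bound into a boundary-trace bound before restarting the local construction. The paper does this by iterating the boundary-trace Duhamel formula over short time steps (Lemma \ref{lm:bar_rho_to_v}); some such step is missing from your outline. Relatedly, for $\Phi\geq 0$ your ``maximum principle'' comparison with the Neumann problem on the fixed half-line is not a legitimate comparison principle: the two problems live on different domains, and the point you need to evaluate, $y=-m<0$, lies outside the fixed half-line. The rigorous version is the sign observation in the boundary-trace formula: when $\Psi\geq 0$ the kernel $\partial_\xi G(\gamma(\tau),\tau,\gamma(\eta),\eta)$ carries the sign of $\gamma(\tau)-\gamma(\eta)\leq 0$, so the history integral is non-positive and $v(x,\tau)\leq 2\int_0^{+\infty}G(\gamma(x,\tau),\tau,\xi,0)u^0(x,\xi)\,d\xi\leq (L^d\sqrt{\pi\tau})^{-1}$, which is precisely the reflected-kernel bound you were aiming for.
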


Section \ref{sec:3} shows the asymptotic stability of stationary states as in the next result.

\begin{theorem}[Asymptotic stability]\label{thm:main2}
Assume $B(t)=B \in \R$. Let $\rhostat$ be a stationary state of \eqref{eq:PDE}-\eqref{eq:BC} for a given $\sigma$, and let $W \in L^2(\mathbb T^d)$. Assume either
\begin{enumerate}
    \item that $\Phi$ is Lipschitz and that $\sigma$ satisfies
    \begin{align*}
        \|\Phi' \|_\infty \frac{\|W\|_{L^2({\mathbb T^d})}}{L^\frac d2} \sup_{x \in {\mathbb T^d}} \left(\int_0^{+\infty}(s-\brho_\infty(x))^2\rhostat(x) ds \right)^\hf < \frac{\sigma}{2} \tilde\gamma (\rhostat )^\hf,
    \end{align*}
    where $\tilde\gamma (\rhostat ) = \inf_{x \in \mathbb T^d} \gamma(\rhostat (x))$, and $\gamma(\rhostat(x))$ is the Poincar\'e constant in $s$ with respect to $\rhostat(x)$, see \eqref{eq:poincare}, or
    \item that $\Phi \in C^2(\mathbb R)$, $W$ is componentwise symmetric, and that for a suitably small $\alpha >0$, $\sigma$ satisfies
    \begin{align*}
    \int_{\mathbb T^d} (1-\alpha)g^2(x) - \frac{\Phi^\delta_g(x)}{\sigma} g(x) \, dx > 0, \quad g \in L^2(\mathbb T^d),
    \end{align*}
    where $$\Phi^\delta_g(x) =\Phi\big( M_\infty W\ast g (x) +W_0 \brho_\infty +B\big)-\Phi_0,$$
    and $\rho_\infty\equiv\rho_\infty(s)$ is homogeneous in space.
\end{enumerate}
Then, the $L^1$ norm of the relative entropy 
\begin{align*}
 \int_{\mathbb T^d} \int_{0}^{+\infty} \left( \dfrac{ \rho(x,s,t) - \rho_\infty(x,s) }{ \rho_\infty(x,s) } \right)^2 \rho_\infty(x,s) ds dx,
\end{align*} 
decays exponentially fast whenever the compatible initial data $\rho^0$ is close enough to the stationary state $\rhostat$ in relative entropy.
\end{theorem}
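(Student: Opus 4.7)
The plan is to implement a weighted $L^2$ relative entropy method with $\rhostat$ as the reference measure. I set $h(x,s,t) = \rho(x,s,t)/\rhostat(x,s)$ and exploit the stationary identity $\sigma\partial_s\rhostat = (M_\infty(x) - s)\rhostat$ on $(0,+\infty)$, where $M_\infty(x) := \Phi(W\ast\brho_\infty(x) + B)$. Substituting $\rho = h\rhostat$ into the flux and using this identity factorises it as $J = \rhostat\bigl[\sigma\partial_s h - (M-M_\infty)h\bigr]$. Differentiating $H(t) := \int_{\mathbb T^d}\int_0^{+\infty}(h-1)^2\rhostat\,ds\,dx$ in time and integrating by parts in $s$, the no-flux condition $J|_{s=0}=0$ together with the fast decay at $s=+\infty$ make the boundary contributions vanish and yield
\begin{equation*}
\tau_c\frac{dH}{dt} = -2\sigma \int_{\mathbb T^d}\int_0^{+\infty}(\partial_s h)^2 \rhostat\,ds\,dx + 2\int_{\mathbb T^d}\int_0^{+\infty} h(M - M_\infty)\partial_s h\,\rhostat\,ds\,dx.
\end{equation*}

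Mass conservation from Definition~\ref{def:classical} forces $\int_0^{+\infty} h\,\rhostat\,ds = 1/L^d$ at every $x$, so the mean of $h$ against the probability $L^d\rhostat(x,\cdot)$ equals $1$; the Poincar\'e inequality \eqref{eq:poincare} then gives $\int (h-1)^2 \rhostat\,ds \leq \gamma(\rhostat(x))^{-1}\int (\partial_s h)^2 \rhostat\,ds$, so the dissipation dominates $2\sigma\tilde\gamma(\rhostat)H(t)$. For the cross-term, Lipschitzness of $\Phi$ gives $|M-M_\infty|(x) \leq \|\Phi'\|_\infty|W\ast(\brho-\brho_\infty)|(x)$; using $\int (h-1)\rhostat\,ds = 0$ to centre the $s$-variable, Cauchy-Schwarz produces $|\brho-\brho_\infty|^2(x) \leq V(x)\int (h-1)^2\rhostat\,ds$ with $V(x) = \int_0^{+\infty}(s-\brho_\infty(x))^2\rhostat(x,s)\,ds$, and the pointwise convolution bound $|W\ast f|(x) \leq \|W\|_{L^2(\mathbb T^d)}\|f\|_{L^2(\mathbb T^d)}$ converts this into a quantitative estimate structured exactly as the stated hypothesis.

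In case (1), I apply Young's inequality with parameter $\eta \in (0,2\sigma)$ to the cross-term, absorbing $\eta\int\int(\partial_s h)^2\rhostat$ into the dissipation and using $\int h^2\rhostat\,ds = 1/L^d + H_x(t)$ (with $H_x(t) := \int_0^{+\infty}(h-1)^2\rhostat(x,s)\,ds$) to split the remainder into a linear and a quadratic contribution in $H$. Optimising $\eta$ and invoking the stability inequality gives $\tau_c H'(t) \leq -c_1 H(t) + c_2 H(t)^2$ with $c_1>0$, and the closeness of $\rho^0$ to $\rhostat$ in relative entropy (i.e.\ $H(0)$ small) closes a Gronwall argument to yield exponential decay. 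Case (2) follows the same skeleton but exploits that componentwise symmetry of $W$ makes $f\mapsto W\ast f$ self-adjoint on $L^2(\mathbb T^d)$ and that $\rhostat$ being space-homogeneous turns $V$ and $\gamma$ into constants; the $C^2$-regularity of $\Phi$ permits a pointwise Taylor expansion of $\Phi^\delta_g$ around $g=0$, and applying the hypothesis to $g = \brho(\cdot,t) - \brho_\infty$ provides the $\alpha$-coercivity gap required to absorb both the linearised cross-term and, for $H(0)$ small, the $C^2$ Taylor remainder.

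The main technical obstacle is the precise tuning of constants in Young's inequality so that the linear coefficient $c_1$ is positive exactly under the stated hypotheses (the factors $1/L^{d/2}$ and $1/2$ in case (1) arise from this optimisation), together with the careful management of the quadratic-in-$H$ remainder that forces the smallness-of-initial-entropy assumption; ensuring that the boundary terms at $s=+\infty$ from integration by parts genuinely vanish despite $h = \rho/\rhostat$ possibly being unbounded in $s$ is a minor subtlety resolved by the fast decay of $\rho$ and $\partial_s\rho$ acting on the explicit structure of $J$.
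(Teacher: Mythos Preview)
Your treatment of case~(1) is correct and essentially matches the paper's Proposition~3.1: both compute the time derivative of the weighted $L^2$ relative entropy, split the cross term, and close via Young's inequality, the Poincar\'e inequality~\eqref{eq:poincare}, and the Cauchy--Schwarz chain bounding $|\Phi_{\bar\rho}-\Phi_0|$ by a constant times $H(t)^{1/2}$. Your variant (applying Young to the full $h\,\partial_s h$ term and then using $\int h^2\rhostat = L^{-d}+H_x$) is a minor reorganisation that in fact yields a slightly less restrictive constraint than the stated one, so the theorem's hypothesis is certainly sufficient.

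Case~(2) is where there is a genuine gap. The hypothesis in~(2) is strictly weaker than that in~(1) --- it is essentially the spectral (H-stability) condition on $W$ tied to the linear analysis of~\cite{CHS} --- and the ``same skeleton'' entropy argument does not close under it. Concretely: if you try to exploit the hypothesis on $g=\brho-\brho_\infty$ inside $\tfrac{d}{dt}H$, the linear part of the cross term is $\int_{\mathbb T^d}\Phi^\delta\int_0^\infty\partial_s h\,\rhostat\,ds\,dx$. Either you bound $\big|\int\partial_s h\,\rhostat\big|$ by $D_x^{1/2}$ via Cauchy--Schwarz, which destroys the sign structure and reduces you to the case-(1) estimate (so you need the stronger hypothesis~(1), not~(2)); or you use the exact identity $\int_0^\infty\partial_s h\,\rhostat\,ds=\sigma^{-1}\brho^\delta-(\rho-\rhostat)(x,0,t)$ to produce $\sigma^{-1}\int\Phi^\delta\brho^\delta$, but this introduces a boundary trace $(\rho-\rhostat)(x,0,t)$ that is not controlled by $H$. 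Your sketch does not address this obstruction, and the sentence ``applying the hypothesis to $g=\brho-\brho_\infty$ provides the $\alpha$-coercivity gap'' does not explain how the quadratic-form condition on $g$ feeds back into a differential inequality for $H$.

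The paper's resolution (Proposition~3.5) is structurally different from what you propose: it does \emph{not} work with $H$ alone but with the modified Lyapunov functional
\[
\mathcal Q \;=\; \int_{\mathbb T^d}\!\Big(\int_0^{+\infty}(h-1)^2\rhostat\,ds \;-\; \tfrac{1}{\sigma}\,\Phi^\delta\,\brho^\delta\Big)\,dx,
\]
together with the orthogonal splitting $h-1 = V + A(s-\brho_\infty)$ where $A=\brho^\delta/M_\infty$. The hypothesis~(2) is used precisely to show that $\mathcal Q$ is equivalent to the relative entropy (this is where the $\alpha$ enters), and the extra $-\tfrac{1}{\sigma}\Phi^\delta\brho^\delta$ term, when differentiated via~\eqref{eq:barrhoevo}, produces cancellations that reorganise $\tfrac{d}{dt}\mathcal Q$ into $-\tfrac{2}{\sigma}\|\Phi^\delta-\sigma\partial_s h\|_{L^2_{\rhostat}}^2$ plus controllable remainders. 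The symmetry of $W$ is used to move the convolution across in the term coming from $\tfrac{d}{dt}\Phi^\delta$, and the $C^2$ regularity of $\Phi$ is used to bound $|\Phi^\delta - W\ast(\Phi'_{\bar\rho}\brho^\delta)|$ by a quadratic expression in $\brho^\delta$ via the mean-value theorem --- not merely to Taylor-expand $\Phi^\delta_g$ as you suggest. This modified-entropy mechanism is the key idea missing from your case-(2) outline.
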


In Section \ref{sec:4}, we extend the above results to the four population system from \cite{CHS} modelling networks of noisy grid cells.

\section{Well-posedness theory}\label{sec:2}

Let us first remark that we can always assume without loss of generality that $\tau_c=1$ and $\sigma = 1$. Notice that it is possible to set these values by considering the change of variables:
\[
\tilde \rho(x,s,t) = \sqrt{\sigma} \rho\left( x , \sqrt{\sigma} s, \tau_c\, t\right) \quad \mbox{or equivalently} \quad \rho(x,s,t) = \dfrac{1}{\sqrt\sigma} \tilde \rho\left(x, \dfrac{s}{\sqrt\sigma},\frac{t}{\tau_c} \right) .  
\]
The scaled density $\tilde\rho$ satisfies \eqref{eq:PDE} with $\tau_c=1$, $\sigma = 1$, $\tilde \Phi = \tfrac{1}{\sqrt\sigma}\Phi$ and $\tilde W = \tfrac{1}{\sqrt{\sigma}}W$.
For the sake of clarity, we will always drop the tilde and write $\rho(x,s,t), \Phi, W$ even while rescaling to normalise $\tau_c$ and $\sigma$. Some of the results below are obtained by implicitly undoing this change of variables after obtaining preliminary results in the normalised case.

\subsection{Equivalence to a free boundary problem with Robin boundary condition}

We first make the change of variables using selfsimilar variables for the linear Fokker-Planck equation followed by a change through characteristics as in \cite{CGGS,CRSS}. More precisely, we define the new variable as $y=e^{t}s$ and $\tau = \tfrac{1}{2}(e^{2t}-1)$,
or equivalently
\[
s=\dfrac{y}{\sqrt{2\tau +1}} = \alpha(\tau)y  \quad \mathrm{and} \quad t = \dfrac{1}{2}\log(2\tau + 1 ) = -\log(\alpha(\tau)),  
\]
where we denote
$\alpha(\tau)=(2\tau+1)^{-\tfrac12} = e^{-t}.$
We define the new function
\[ 
q(x,y,\tau) = \alpha(\tau) \rho\Big(x,y\alpha(\tau), -\log(\alpha(\tau))\Big) \quad
\mbox{or equivalently} \quad
 \rho(x,s,t) = e^t q(x,e^t s , \tfrac12 (e^{2t}-1)). 
 \]
An uninspiring exercise in utilising the classical chain rule leads to
\begin{align*}  \dfrac{\partial q}{\partial \tau}(x,y,\tau) =&\, \alpha'(\tau)\rho\big(x,y\alpha(\tau), -\log(\alpha(\tau)) \big)
+ \ y\alpha'(\tau)\alpha(\tau)\dfrac{\partial \rho}{\partial s}\big(x,y\alpha(\tau), -\log(\alpha(\tau)) \big)\\
&- \ \alpha'(\tau)\dfrac{\partial \rho}{\partial t}\big(x,y\alpha(\tau), -\log(\alpha(\tau)), 
\\ 
\dfrac{\partial q}{\partial y}(x,y,\tau) = &\,\alpha(\tau)^2 \dfrac{\partial \rho}{\partial s}\big(x,y\alpha(\tau), -\log(\alpha(\tau)) \big),  \\
\dfrac{\partial^2 q}{\partial y^2}(x,y,\tau) =&\, \alpha(\tau)^3 \dfrac{\partial^2 \rho}{\partial s^2}\big(x,y\alpha(\tau), -\log(\alpha(\tau)) \big)= -\alpha'(\tau) \dfrac{\partial^2 \rho}{\partial s^2}\big(x,y\alpha(\tau), -\log(\alpha(\tau)) \big),
\end{align*}
where $-\alpha'(\tau) =(2\tau+1)^{-\tfrac32} =  \alpha(\tau)^3$ is used.
Using \eqref{eq:PDE} and defining $\beta(\tau) = B(-\log(\alpha(\tau)))$, we obtain
\begin{align*}  \dfrac{\partial \rho}{\partial t}\big(x,y\alpha(\tau), -\log(\alpha(\tau))\big)  = &\,  y\alpha(\tau)\dfrac{\partial \rho}{\partial s}\big(x,y\alpha(\tau), -\log(\alpha(\tau)) \big) + \rho\big(x,y\alpha(\tau), -\log(\alpha(\tau)) \big)\\ &  - \Psi(x,\tau) \alpha(\tau)^{-1} \dfrac{\partial \rho}{\partial s}\big(x,y\alpha(\tau), -\log(\alpha(\tau)) \big)   \\ &+  \dfrac{\partial^2 \rho}{\partial s^2}\big(x,y\alpha(\tau), -\log(\alpha(\tau)) \big)  . 
\end{align*}
with $\Psi(x,\tau)=\Phi\big( W \ast \bar \rho(x,-\log(\alpha(\tau)) + \beta(\tau)  \big)\alpha(\tau)$. Denoting
\[  
\bar q (x,\tau) = \int_{0}^{+\infty} y q(x,y,\tau) d y ,
\]
we have 
\[
\bar \rho(x,t) = \int_0^{+\infty} s \rho(x,s,t) ds = \int_{0}^{+\infty} \alpha(\tau) y \dfrac{1}{\alpha(\tau)} q(x,y,\tau) \alpha(\tau)d y   =  \alpha(\tau) \bar q(x,\tau).
\] 
Collecting the previous computations, we conclude
\begin{equation}\label{eq:for_q}
    \dfrac{\partial q}{\partial \tau}(y,\tau) = \dfrac{\partial^2 q}{\partial y^2}(y,\tau) - \Psi(x,\tau) \dfrac{\partial q}{\partial y}(y,\tau).
\end{equation}
determining $q$ with boundary condition
\begin{equation}
     \Psi(x,\tau)q(x,0,\tau) - \dfrac{\partial q}{\partial y}(x,0,t)  = 0.
\end{equation}
In order to get rid of the drift term in \eqref{eq:for_q}, we make the second change of variable
\[
z = y - \int_{0}^{\tau} \Psi(x,\eta) d \eta.
\]
Define $u$ by $u(x,z,\tau)=q(x,y,\tau)$ and denote
\[ 
\gamma(x,\tau) = - \int_{0}^{\tau} \Psi(x,\eta) d \eta, \qquad \bar u(x,\tau) = \int_{\gamma(x,\tau)}^{+\infty} zu(x,z,\tau)dz. 
\]
Since 
$\bar q (x,\tau) = \bar u(x,\tau) - \gamma(x,\tau),$
the function $u$ solves, for all $x\in \mathbb T^d$, the Stefan-like free boundary problem
\begin{equation}\label{eq:Stefan}
\left\{\begin{array}{rcll}
\displaystyle \dfrac{\partial u}{\partial \tau}(x,z,\tau) &=& \dfrac{\partial^2 u}{\partial z^2}(x,z,\tau), \qquad\qquad\qquad\  z\in(\gamma(x,\tau), +\infty),&  \tau\in\R_+, \\[3mm]
\displaystyle \gamma(x,\tau) &=& - \displaystyle\int_{0}^{\tau} \Psi(x,\eta) d \eta, & \tau\in\R_+,\\
\dfrac{\partial u}{\partial y}(x,\gamma(x,\tau),\tau) &=& \Psi(x,\tau) u(x,\gamma(x,\tau),\tau), &\tau\in\R_+,\\
\Psi(x,\tau) &=& \Phi\big( \alpha(\tau) W \ast [ \bar u (x,\tau) - \gamma(x,\tau)] + \beta(\tau)\big)\alpha(\tau),\ &\tau\in\R_+,  \\[2mm]
\displaystyle u(x,z,0)&=&u^0(x,z),& z\in\,(0,+\infty).
\end{array}\right.
\end{equation}
This representation is similar to the ones obtained for the NNLIF model \cite{CGGS,CRSS}. Because it would be cumbersome to write in full details, we implicitely consider as a definition of solution for system \eqref{eq:Stefan} the natural extension of Definition \ref{def:classical} after applications of the changes of variables.

\subsection{Duhamel formulae}

In order to obtain a Duhamel formula for $u$, we use the heat kernel
\[  G(z,\tau,\xi,\eta) = \frac{1}{\sqrt{4\pi (\tau-\eta) }}e^{-\frac{(z-\xi)^2}{4(\tau-\eta)}}. \]
This kernel satisfies the Green identity
\[ \dfrac{\partial}{\partial \xi} \left( G\dfrac{\partial u}{\partial \xi} - u\dfrac{\partial G}{\partial \xi} \right) - \dfrac{\partial }{\partial \eta}\big( Gu \big)  = 0.\]
For each $x\in \mathbb T^d$, we integrate the identity in the domain $(\gamma(x,t),+\infty)\times (0,\tau)$; we thereby obtain
\[ \underbrace{\int_{0}^\tau\int_{\gamma(x,\eta)}^{+\infty}  \dfrac{\partial}{\partial \xi} \left( G\dfrac{\partial u}{\partial \xi}  \right) d\xi d\eta }_{I} - \underbrace{\int_{0}^\tau \int_{\gamma(x,\eta)}^{+\infty}  \dfrac{\partial}{\partial \xi} \left( u\dfrac{\partial G}{\partial \xi} \right) d\xi d\eta}_{II} - \underbrace{\int_{0}^\tau \int_{\gamma(x,\eta)}^{+\infty}  \dfrac{\partial }{\partial \eta}\big( Gu \big) d\xi d\eta}_{III}   = 0.  \]
We first compute, using the boundary condition and fast decay of the solution
\begin{align*}
I &=\displaystyle \int_0^\tau \left[ G(z,\tau,\xi,\eta)\dfrac{\partial u}{\partial \xi}(x,\xi,\eta) \right]^{\xi=+\infty}_{\xi=\gamma(x,\eta)} d\eta
= \displaystyle -  \int_0^\tau G(z,\tau,\gamma(x,\eta),\eta)\dfrac{\partial u}{\partial \xi}(x,\gamma(x,\eta),\eta) d\eta\\
&= \displaystyle -  \int_0^\tau G(z,\tau,\gamma(x,\eta),\eta) \Psi(x,\eta) u(x,\gamma(x,\eta),\eta) d\eta.
\end{align*}
Similarly, we get
\begin{align*}
II = \displaystyle  \int_0^\tau \left[ u(x,\xi,\eta)\dfrac{\partial G}{\partial \xi}(z,\tau,\xi,\eta) \right]^{\xi=+\infty}_{\xi=\gamma(x,\eta)} d\eta 
= \displaystyle - \int_0^\tau \dfrac{\partial G}{\partial \xi}(z,\tau,\gamma(x,\eta),\eta) u(x,\gamma(x,\eta),\eta) d\eta.
\end{align*}
Let us assume for now that $\gamma$ is smooth enough to be differentiated in time, which will be proved later; then
\begin{align*}
    \int_{\gamma(x,\eta)}^{+\infty}  \dfrac{\partial }{\partial \eta}\big( Gu \big) & d\xi d\eta\, \\=&\, \dfrac{\partial }{\partial \eta} \int_{\gamma(x,\eta)}^{+\infty}  G(z,\tau,\xi,\eta) u(x,\xi,\eta)  d\xi d\eta \, +\, \dfrac{\partial \gamma}{\partial \eta}(x,\eta) G(z,\tau,\gamma(x,\eta),\eta)u(x,\gamma(x,\eta),\eta)\\  
    =&\, \dfrac{\partial }{\partial \eta} \int_{\gamma(x,\eta)}^{+\infty}  G(z,\tau,\xi,\eta) u(x,\xi,\eta)  d\xi d\eta \, -\, \Psi(x,\eta) G(z,\tau,\gamma(x,\eta),\eta)u(x,\gamma(x,\eta),\eta).
\end{align*}
Last, using that in the limit $\eta\to\tau$ the heat kernel satisfies $G(z,\tau,\xi,\eta) = \delta_{z = \xi}$, the previous result yields
\begin{align*} 
III\, =&\, \displaystyle   \left[ \int_{\gamma(x,\eta)}^{+\infty} G(z,\tau, \xi ,\eta) u(x,\xi,\tau) d\xi\right]^{\eta=\tau}_{\eta=0} - \int_0^\tau  G(z,\tau,\gamma(x,\eta),\eta)\Psi(x,\eta)u(x,\gamma(x,\eta),\eta)d\eta\\
=&\,  u(x,z,\tau) - \int_{0}^{+\infty} G(z,\tau, \xi ,0) u^0(x,\xi) d\xi - \int_0^\tau  G(z,\tau,\gamma(x,\eta),\eta)\Psi(x,\eta) u(x,\gamma(x,\eta),\eta)d\eta.
\end{align*}
Hence, we conclude
\begin{align}\label{eq:Duhamel}
    u(x,z,\tau) =&\, \int_{0}^{+\infty} G(z,\tau, \xi ,0) u^0(x,\xi) d\xi + \int_{0}^{\tau} \dfrac{\partial G}{\partial \xi}(z,\tau,\gamma(x,\eta),\eta)u(x,\gamma(x,\eta),\eta) d\eta.
\end{align}
Let us denote $v(x,\tau)= u(x,\gamma(x,\tau),\tau)$. Before passing to the limit $z\to \gamma(x,\tau)$ to get the boundary condition, we handle with care the most singular term: by \cite[Lem. 1, p. 217]{god}, as long as the function $v$ is continuous and the function $\gamma$ is Lipschitz with respect to the second variable (these regularity assumptions will be enforced by construction later on), we have for each $x\in\mathbb T^d$
\[
\lim_{z\to \gamma(x,\tau)} \int_0^\tau \dfrac{\partial G}{\partial \xi}(z,\tau,\gamma(x,\eta),\eta) v(x,\eta) d\eta = \dfrac{v(x,\tau)}{2} + \int_0^\tau \dfrac{\partial G}{\partial \xi}(\gamma(x,\tau),\tau, \gamma(x,\eta),\eta) v(x,\eta) d\eta.  
\]
Hence, taking the limit $z\to \gamma(x,\tau)$ in \eqref{eq:Duhamel}, we obtain
\begin{equation}\label{du}
    v(x,\tau) = 2 \int_{0}^{+\infty} G(\gamma(x,\tau),\tau, \xi ,0) u^0(x,\xi) d\xi +2\int_{0}^{\tau}  \dfrac{\partial G}{\partial \xi}(\gamma(x,\tau),\tau,\gamma(x,\eta),\eta)v(x,\eta) d\eta .
\end{equation}
Then, we take the average of \eqref{eq:Duhamel} to deduce
\begin{align}\label{du_hast}
    \bar u (x,\tau) =& \, \int_{0}^{+\infty} \int_{\gamma(x,\tau)}^{+\infty} zG(z,\tau, \xi ,0)dz u^0(x,\xi) d\xi\nonumber\\& + \int_{0}^{\tau} \int_{\gamma(x,\tau)}^{+\infty} z \dfrac{\partial G}{\partial \xi}(z,\tau,\gamma(x,\eta),\eta) dz\, v(x,\eta) d\eta .
\end{align}
Last, the boundary $\gamma(x,\tau)$ satisfies
\begin{equation}\label{du_hast_mich}
    \gamma(x,\tau) =  - \int_0^\tau \Phi\big( \alpha(\eta) W \ast [ \bar u (x,\eta) - \gamma(x,\eta)] + \beta(\eta)\big)\alpha(\eta)d\eta =  - \int_0^\tau \Psi(x,\eta)d\eta .
\end{equation}
Hence, the triplet $(v, \gamma, \bar u)$ satisfies the coupled Duhamel formulae \eqref{du}--\eqref{du_hast}--\eqref{du_hast_mich} that we write in short as
\begin{equation}
    \left\{\begin{array}{rcl}
        v(x,\tau)      &=&   F_v[v,\gamma, \bar u](x,\tau) \\
        \gamma(x,\tau) &=&   F_\gamma[v,\gamma,\bar u](x,\tau)\\
        \bar u(x,\tau) &=&  F_{\bar u}[v,\gamma,\bar u](x,\tau).
    \end{array}\right.
\end{equation}

\subsection{Existence of a local solution}

A natural idea that is akin to the method used in \cite{CGGS} would be to look for a continuous solution of the closed system \eqref{du}-\eqref{du_hast}-\eqref{du_hast_mich} by applying the Banach fixed point theorem in a closed bounded subspace of $C^0(\mathbb T^d\times [0,\tau_0])^3$. An issue with this strategy is that we need slightly higher regularity for $\gamma$ than for $\bar u$ and $v$. Rather than including this regularity into the space, which would raise additional technical difficulties, it is more convenient to hide the equation for $\gamma$ inside a two-component system for $(v,\bar u)$ and derive some handy estimates on $\gamma$ along the way. Let us consider $\tau_0 \in (0,1]$; the choice $\tau_0\leqslant 1$ is arbitrary and serves the sole purpose of estimating the contribution of the exterior input function $B(t)$. We define the following mapping:
\begin{equation}
    \begin{array}{rccl}
         \mathcal T:& C^0(\mathbb T^d\times [0,\tau_0])^2 & \to & C^0(\mathbb T^d\times [0,\tau_0])^2  \\
                   & (v,\bar u) & \mapsto &  ( F_v[v,\gamma, \bar u], F_{\bar u}[v,\gamma,\bar u]), 
    \end{array}
\end{equation}
where the function $\gamma$ is the unique continuous solution of the initial value problem
\begin{equation}\label{eq:gamma_ODE}
     \left\{\begin{array}{rcll}
          \dfrac{\partial \gamma}{\partial \tau}(x,\tau) & = & -\Phi\big( \alpha(\tau) W \ast [ \bar u (x,\tau) - \gamma(x,\tau)] + \beta(\tau)\big)\alpha(\tau),  \quad & x\in\mathbb T^d,\ \tau\in(0,\tau_0], \\
          \gamma(x,0)& = &  \gamma^0(x),       &x\in\mathbb T^d,
     \end{array}\right.
\end{equation}
as we prove next.
\begin{lemma}\label{lm:gamma}
    Assume $\Phi$ is locally Lipschitz and $W\in L^1(\mathbb T^d)$. Let $m>0$ be a positive real number. There exists $\tau_0\in(0,1]\,$ small enough, depending only on $\Phi,\beta,W$ and $m$, such that for all $\bar u\in C^0(\mathbb T^d\times [0,\tau_0])$ and $\gamma^0\in C^0(\mathbb T^d)$ satisfying 
    \[  \norme{\bar u}_{L^\infty(\mathbb T^d\times[0,\tau_0])} \leqslant m, \qquad \mathrm{and} \qquad \norme{\gamma^0}_{L^\infty(\mathbb T^d)}\leqslant \frac m2, \]
    there exists a unique solution $\gamma\in C^0(\mathbb T^d\times [0,\tau_0])$ to \eqref{eq:gamma_ODE}, and for all $x\in \mathbb T^d$, $\tau\mapsto \gamma(x,\tau)$ is a $C^1$ function. Moreover, there exists a constant $C_\Psi>0$ depending only on $\Phi,\Psi,\norme{W}_{L^1},m$ such that
    \begin{equation*}
        \norme{\dfrac{\partial \gamma}{\partial \tau}}_{L^\infty(\mathbb T^d\times [0,\tau_0])} = \norme{\Psi}_{L^\infty(\mathbb T^d\times [0,\tau_0])} \leqslant C_\Psi, \qquad \mathrm{and} \qquad \norme{\gamma}_{L^\infty(\mathbb T^d\times [0,\tau_0])} \leqslant m.
    \end{equation*}
    and, for all $\bar u_1, \bar u_2 \in  C^0(\mathbb T^d\times [0,\tau_0])$ satisfying $\norme{\bar u_1},\norme{\bar u_2}\leqslant m$, if $\gamma_1$ and $\gamma_2$ are the respective solutions of \eqref{eq:gamma_ODE}, then for all $\tau\in(0,\tau_0)$,
    \begin{equation*}
        \norme{\gamma_1 - \gamma_2}_{L^{\infty}(\mathbb T^d\times [0,\tau])} \leqslant \tau C_\Psi\norme{\bar u_1 - \bar u_2 }_{L^{\infty}(\mathbb T^d\times [0,\tau_0])}.
    \end{equation*}
\end{lemma}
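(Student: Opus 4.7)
I would first rewrite \eqref{eq:gamma_ODE} in integral form
\[
\gamma(x,\tau) = \gamma^0(x) - \int_0^\tau \Phi\big( \alpha(\eta)\,W\ast[\bar u(x,\eta) - \gamma(x,\eta)] + \beta(\eta)\big)\alpha(\eta)\,d\eta,
\]
and treat it as a fixed-point equation $\gamma = \mathcal G[\gamma]$ on the closed ball $\mathcal B_m = \{\gamma \in C^0(\mathbb T^d \times [0,\tau_0]) : \norme{\gamma}_{L^\infty} \leq m\}$ endowed with the sup norm. The key observation is that as long as $\gamma \in \mathcal B_m$ and $\norme{\bar u}_{L^\infty}\leq m$, Young's convolution inequality gives $\norme{W\ast[\bar u - \gamma]}_{L^\infty} \leq 2m\norme{W}_{L^1}$, and, since $0\leq \alpha(\tau)\leq 1$ and $\tau_0 \leq 1$, the argument of $\Phi$ stays in a compact interval $I_m \subset \mathbb R$ determined by $m$, $\norme{W}_{L^1}$ and $\norme{\beta}_{L^\infty([0,1])}$. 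Setting $\Phi_{\max} := \sup_{I_m}|\Phi|$ and $L_\Phi := \mathrm{Lip}(\Phi;I_m)$, both finite by local Lipschitzness, I would control everything through these two constants.

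Invariance of $\mathcal B_m$ then follows from $|\mathcal G[\gamma](x,\tau)| \leq m/2 + \tau_0 \Phi_{\max}$, which holds as soon as $\tau_0 \leq m/(2\Phi_{\max})$; and for $\gamma_1,\gamma_2\in\mathcal B_m$, the Lipschitz bound on $\Phi$ yields
\[
\norme{\mathcal G[\gamma_1] - \mathcal G[\gamma_2]}_{L^\infty} \leq \tau_0 L_\Phi \norme{W}_{L^1}\norme{\gamma_1-\gamma_2}_{L^\infty},
\]
a strict contraction after imposing $\tau_0 < 1/(L_\Phi \norme{W}_{L^1})$. Banach's fixed-point theorem then produces a unique solution $\gamma \in \mathcal B_m$, and continuity of the integrand in $\tau$ together with the fundamental theorem of calculus makes $\tau\mapsto \gamma(x,\tau)$ of class $C^1$ with $\partial_\tau \gamma = -\Psi$. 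The pointwise bound $|\Psi(x,\tau)| \leq \Phi_{\max}$, inherited from the definition of $I_m$, gives at once the claimed control on $\norme{\partial_\tau\gamma}_{L^\infty} = \norme{\Psi}_{L^\infty}$.

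For the Lipschitz-in-$\bar u$ estimate, I would subtract the integral equations for two solutions $\gamma_1,\gamma_2$ associated with inputs $\bar u_1,\bar u_2\in\mathcal B_m$, apply the Lipschitz bound on $\Phi$ and Young's inequality to obtain
\[
\norme{\gamma_1-\gamma_2}_{L^\infty(\mathbb T^d\times[0,\tau])} \leq L_\Phi \norme{W}_{L^1}\left(\tau\norme{\bar u_1-\bar u_2}_{L^\infty} + \int_0^\tau \norme{\gamma_1-\gamma_2}_{L^\infty(\mathbb T^d\times[0,\eta])}\,d\eta\right),
\]
then close by Grönwall's inequality, yielding a constant of the shape $\tau L_\Phi\norme{W}_{L^1}e^{L_\Phi\norme{W}_{L^1}\tau_0}\norme{\bar u_1-\bar u_2}_{L^\infty}$, which is absorbed into a single $C_\Psi$ depending only on $\Phi,\beta,\norme{W}_{L^1},m$.

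The only genuine difficulty I foresee is that the convolution $W\ast[\bar u-\gamma]$ couples different spatial points, which forbids a purely pointwise ODE treatment and compels working in the sup norm on all of $\mathbb T^d\times[0,\tau_0]$ throughout. This is precisely what Young's inequality handles; the real care is in setting up the fixed-point framework so that the argument of $\Phi$ remains confined to a single compact set $I_m$, allowing the local Lipschitz constant of $\Phi$ to play the role of a global one and producing constants that depend only on $m$, $W$, $\Phi$ and $\beta$ as required.
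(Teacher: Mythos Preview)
Your proof is correct and follows essentially the same route as the paper: recast \eqref{eq:gamma_ODE} in integral form, set up a contraction on the closed ball of radius $m$ in $C^0(\mathbb T^d\times[0,\tau_0])$ using that the argument of $\Phi$ stays in a fixed compact set (so the local Lipschitz constant serves as a global one), and apply Banach's fixed-point theorem. The only cosmetic difference is in the final Lipschitz-in-$\bar u$ estimate, where the paper absorbs the $\norme{\gamma_1-\gamma_2}$ term directly via the already-enforced smallness $\tau_0 L_\Phi\norme{W}_{L^1}<1$ to get the constant $\dfrac{L_\Phi\norme{W}_{L^1}}{1-\tau_0 L_\Phi\norme{W}_{L^1}}$, whereas you use Gr\"onwall to obtain $L_\Phi\norme{W}_{L^1}e^{L_\Phi\norme{W}_{L^1}\tau_0}$; both are of the required form $\tau\,C_\Psi$ with $C_\Psi$ depending only on the stated data.
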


\begin{proof}
Note first that $\alpha(\tau)=e^{-t}\leqslant 1$. Then, given $\bar u \in C^0(\mathbb T^d\times [0,\tau_0])$, define the mapping
\begin{equation}
    \begin{array}{rccl}
         \mathcal T_{\bar u}:& C^0(\mathbb T^d\times [0,\tau_0]) & \to & C^0(\mathbb T^d\times [0,\tau_0]),  \\
                   & \gamma & \mapsto &\displaystyle  \tilde \gamma,
    \end{array}
\end{equation} 
where
\[\tilde \gamma (x,\tau) = \gamma^0(x) - \int_0^\tau \Phi\big( \alpha(\eta) W \ast [ \bar u (x,\eta) - \gamma(x,\eta)] + \beta(\eta)\big)\alpha(\eta) d\eta.\]
For any $\tau_1\in (0,1]$, define the space 
\[  \mathcal U_{\tau_1,m} = \{ \gamma \in C^0(\mathbb T^d \times [0,\tau_1])\ | \  \norme{\gamma}_{L^{\infty}(\mathbb T^d\times [0,\tau_1])} \leqslant m\},  \]
We will prove that for $\tau_1$ small enough, $\mathcal T_{\bar u}$ is a contraction on the complete metric space $\mathcal U_{\tau_1,m}$ endowed witht the norm $\norme{\cdot}_{L^{\infty}(\mathbb T^d\times [0,\tau_1])}$ and therefore admits a unique fixed point $\gamma$, the solution to \eqref{eq:gamma_ODE}.

Let us prove first that $\mathcal T_{\bar u}\left( \mathcal U_{\tau_1,m}\right) \subset \mathcal U_{\tau_1,m}$ for $\tau_1$ small enough. For all $\gamma\in C^0(\mathbb T^d\times [0,\tau_1])$, recalling that $\tau_1\leqslant1$, we can write the bound
    \[ |\alpha(\eta) W \ast [ \bar u (x,\eta) - \gamma(x,\eta)] + \beta(\eta)| \leqslant 2\norme{W}_{L^1} m + \norme{\beta}_{L^\infty(0,1)}  =: C. \]
    Hence, taking into account \eqref{eq:Stefan} we have
    \[ \norme{\Psi}_{L^{\infty}(\mathbb T^d\times [0,\tau_1])} \leqslant \norme{\Phi}_{L^\infty(0,C)},    \]
    and
    \[ \norme{\mathcal T_{\bar u}(\gamma)}_{L^{\infty}(\mathbb T^d\times [0,\tau_1])} \leqslant \norme{\gamma^0}_{L^\infty(\mathbb T^d)} + \int_{0}^{\tau_1} \norme{\Phi}_{L^\infty(0,C)}d \eta \leqslant  \frac m2 +  \tau_1\norme{\Phi}_{L^\infty(0,C)}.   \]
    If we choose $\tau_1$ small enough with respect to $\Phi$, $\beta$, $\norme{W}_{L^1}$ and $m$, then $\mathcal T_{\bar u}\left( \mathcal U_{\tau_1,m}\right) \subset \mathcal U_{\tau_1,m}$.
    
    Let us prove now that $\mathcal T_{\bar u}$ is a contraction for $\tau_1$ small enough. Let $\gamma_1,\gamma_2\in \mathcal U_{\tau_1,m}$. The function $\Phi$ being locally Lipschitz, there exists a constant $C_\Phi$ depending only on $C$ such that for all $p_1,p_2\in [-C,C]$, $ |\Phi(p_1)-\Phi(p_2)| \leqslant C_\Phi |p_1 - p_2| $. 
    Therefore,
    \begin{align*}
       |\mathcal T_{\bar u}(\gamma_1) - \mathcal T_{\bar u}(\gamma_2)| &\leqslant \int_0^\tau \big|\Phi(\alpha(\eta) W \ast [ \bar u (x,\eta) - \gamma_1(x,\eta)] + \beta(\eta)) \\
       &\qquad\quad- \Phi(\alpha(\eta) W \ast [ \bar u (x,\eta) - \gamma_2(x,\eta)] + \beta(\eta))\big| d\eta \\
        &\leqslant \int_0^\tau C_\Phi |W \ast [ \gamma_1 - \gamma_2](x,\eta)| d\eta \\
        & \leqslant \tau_1 C_\Phi \norme{W}_{L^1} \norme{\gamma_1 -\gamma_2}_{L^{\infty}(\mathbb T^d\times [0,\tau_1])}.
    \end{align*}
    If $\tau_1$ is small enough, then $\tau_1 C_\Phi \norme{W}_{L^1} < 1$ and $\mathcal T_{\bar u}$ is a contraction on $\mathcal U_{\tau_1,m}$.
    Hence, there exists a unique solution $\gamma$ to \eqref{eq:gamma_ODE} in $C^0(\mathbb T^d\times [0,\tau_1])$ and it satisfies $\norme{\gamma}_{L^\infty(\mathbb T^d\times [0,\tau_1])} \leqslant m$. For all $x\in\mathbb T^d$, $t\mapsto \gamma(x,t)$ is differentiable and admits the derivative $\Psi$. Since $\bar u, \gamma\in C^0(\mathbb T^d\times [0,\tau_1])$, $\Psi\in C^0(\mathbb T^d\times [0,\tau_1])$; thus $t\mapsto \gamma(x,t)$ is a $C^1$ function on $[0,\tau_1]$. The previous computations ensure that
    \[\norme{\dfrac{\partial \gamma}{\partial \tau}}_{L^\infty(\mathbb T^d\times [0,\tau_1])} = \norme{\Psi}_{L^\infty(\mathbb T^d\times [0,\tau_1])} \leqslant C_\Psi^1(\tau_1) := C(\Phi,\beta,\norme{W}_{L^1},m,\tau_1).\]
    Last, considering two functions $\bar u_1, \bar u_2\in \mathcal U_{\tau_1,m}$ and denoting $\gamma_1,\gamma_2$ the corresponding solutions, we compute like above
    \begin{align*}
       |\gamma_1 -\gamma_2 | &\leqslant \int_0^\tau \big|\Phi(\alpha(\eta) W \ast [ \bar u_1 (x,\eta) - \gamma_1(x,\eta)] + \beta(\eta)) \\
       &\qquad\quad- \Phi(\alpha(\eta) W \ast [ \bar u_2 (x,\eta) - \gamma_2(x,\eta)] + \beta(\eta))\big| d\eta \\
        &\leqslant \int_0^\tau C_\Phi |W \ast [\bar u_1 - \bar u_2 + \gamma_1 - \gamma_2](x,\eta)| d\eta \\
        & \leqslant \tau C_\Phi \norme{W}_{L^1} \norme{\gamma_1 -\gamma_2}_{L^{\infty}(\mathbb T^d\times [0,\tau])} + \tau C_\Phi \norme{W}_{L^1} \norme{\bar u_1 -\bar u_2}_{L^{\infty}(\mathbb T^d\times [0,\tau_1])}.
    \end{align*}
    Taking the supremum on $\mathbb T^d\times [0,\tau]$ in the left-hand side of the last inequality, we obtain
    \[  \norme{\gamma_1 -\gamma_2}_{L^{\infty}(\mathbb T^d\times [0,\tau])} \leqslant \dfrac{\tau\, C_\Phi \norme{W}_{L^1}}{1 -  \tau_1 C_\Phi \norme{W}_{L^1} } \norme{\bar u_1 -\bar u_2}_{L^{\infty}(\mathbb T^d\times [0,\tau_1])}.  \]
    By the previous choice of $\tau_1$, we are sure that the division was allowed and that
    \[ C_\Psi^2(\tau_1) :=  \dfrac{ C_\Phi \norme{W}_{L^1}}{1 -  \tau_1 C_\Phi \norme{W}_{L^1} }  >0. \]
    The constants $C_\Psi^1(\tau_1), C_\Psi^2(\tau_1)$ being non-increasing functions of $\tau_1$, we can choose any $\tau_0\in(0,\tau_1)$ and use the constant $C_\Psi = \max(C_\Psi^1(\tau_1), C_\Psi^2(\tau_1) )$ as a bound that is independent of $\tau_0$ small enough (smaller than $\tau_1$), which concludes the proof of the lemma.
\end{proof}

\begin{remark}[Blow-up of the free boundary]
    In the previous lemma, we treated the case where the initial position of the boundary is not 0 and we can see in the proof that $\norme{\gamma^0}_{L^\infty(\mathbb T^d)}$ impacts the local existence time $\tau_0$. Note that, regardless of $\bar u$, it is easy to construct examples of blow-up of the free boundary: taking $\Phi(p) = p^2$, $\gamma^0$ constant, $W(x)\equiv 1$, $\beta\equiv 0$, and $\bar u \equiv 0$, \eqref{eq:gamma_ODE} reduces to $\gamma'(\tau) = \alpha(\tau)^3 \gamma(\tau)^2$. If the initial value of $\gamma^0$ is spatially constant and large enough, this equation blows-up in finite time before $\alpha(\tau)$ has the chance to dampen the dynamics.
    
\end{remark} 

An initial position of the boundary which is not uniformly 0 allows us to take a solution of the Stefan problem at some time $\tau_0$ and construct a new local solution on $[\tau_0,\tau_1]$ from there. However, the initial position of the boundary would lead to unecessary technical difficulties in the following proofs. Therefore, we assume without loss of generality that $\gamma^0\equiv 0$. This can be achieved by a change of variable $z' = z - \gamma^0$ and it comes with a minor modification of $u^0$. The effect of $\gamma^0$ upon the local time of existence is not lost in this process, for the new initial average $\bar u^0(x) + \gamma^0(x) \norme{u^0(x)}_{L^1(\gamma^0(x),+\infty)}$ now bears it. This change of variables has to be done at each final time of the intervals in which the extension of the solution is performed.

\begin{proposition}\label{prop:local}
    Assume $\Phi$ is locally Lipschitz, $\beta\in C^0(\R_+)$ and $W\in L^1(\mathbb T^d)$. For all compatible initial conditions 
    $u^0$, there exists a unique maximal local-in-time solution $(v,\gamma,\bar u)\in C^0(\mathbb T^d \times [0,\tau^*))^3$ to the system \eqref{du}--\eqref{du_hast}--\eqref{du_hast_mich}. The maximal time of existence $\tau^*$ satisfies
    \begin{equation}\label{eq:time_max}
        \sup\,\{\ \tau\in\R_+^* \ | \  \sup_{x\in\mathbb T^d}\max(|v(x,\tau)|,|\gamma(x,\tau)|,|\bar u(x,\tau)|) <  +\infty \ \}.
    \end{equation}
\end{proposition}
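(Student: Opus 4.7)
The plan is to apply Banach's fixed point theorem to the mapping $\mathcal T$ on a suitable complete metric subspace of $C^0(\mathbb T^d\times[0,\tau_0])^2$. Fix $m>0$ larger than the natural initial data norms (in particular larger than $\norme{u^0(\cdot,0)}_{L^\infty}$ and $\sup_{x\in\mathbb T^d}\int_0^{+\infty}\xi u^0(x,\xi)d\xi$), and, for $\tau_0\in(0,1]$ to be determined, consider the closed ball
\begin{equation*}
\mathcal B_{\tau_0,m} = \{(v,\bar u)\in C^0(\mathbb T^d\times[0,\tau_0])^2 \,:\, \norme{v}_{L^\infty}\leqslant m,\ \norme{\bar u}_{L^\infty}\leqslant m\}
\end{equation*}
endowed with the sup norm. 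For each $\bar u\in\mathcal B_{\tau_0,m}$, Lemma~\ref{lm:gamma} provides a unique $\gamma\in C^0(\mathbb T^d\times[0,\tau_0])$ satisfying $\norme{\gamma}_{L^\infty}\leqslant m$ and $\norme{\partial_\tau \gamma}_{L^\infty}\leqslant C_\Psi$, so that the operator $\mathcal T:(v,\bar u)\mapsto (F_v[v,\gamma,\bar u],F_{\bar u}[v,\gamma,\bar u])$ is well-defined.

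The first step is to prove $\mathcal T(\mathcal B_{\tau_0,m})\subset\mathcal B_{\tau_0,m}$. The Gaussian contribution $\int_0^{+\infty}G(\gamma(x,\tau),\tau,\xi,0)u^0(x,\xi)d\xi$ is bounded independently of $\tau_0$ by standard heat-kernel estimates on $u^0$. The singular integral in \eqref{du} is controlled by using
\begin{equation*}
\dfrac{\partial G}{\partial \xi}(\gamma(x,\tau),\tau,\gamma(x,\eta),\eta) = \dfrac{\gamma(x,\tau)-\gamma(x,\eta)}{2(\tau-\eta)}G(\gamma(x,\tau),\tau,\gamma(x,\eta),\eta),
\end{equation*}
and writing $\gamma(x,\tau)-\gamma(x,\eta)=\int_\eta^\tau \partial_s\gamma(x,s)ds$ to bound the first factor in absolute value by $\tfrac12 C_\Psi$; what remains is $(\tau-\eta)^{-\hf}$ times a Gaussian, which integrates to a quantity of order $\sqrt{\tau_0}$. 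A similar computation with the explicit primitives in $z$ of $zG$ and $z\partial_\xi G$ (Gaussians plus error functions) gives the analogous bound for $F_{\bar u}$, using the moment control on $u^0$. Choosing $\tau_0$ small then enforces $\norme{F_v}_{L^\infty},\norme{F_{\bar u}}_{L^\infty}\leqslant m$.

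For the contraction step, take two pairs $(v_i,\bar u_i)\in\mathcal B_{\tau_0,m}$ with associated $\gamma_i$ from Lemma~\ref{lm:gamma}, which already provides $\norme{\gamma_1-\gamma_2}_{L^\infty}\leqslant \tau_0 C_\Psi\norme{\bar u_1-\bar u_2}_{L^\infty}$, and combine this with the local Lipschitz estimate $\norme{\Psi_1-\Psi_2}_{L^\infty}\leqslant C(\norme{\bar u_1-\bar u_2}_{L^\infty}+\norme{\gamma_1-\gamma_2}_{L^\infty})$. Using the smoothness of $G$ away from the diagonal, the mean value theorem in the $\gamma$-arguments of $G$ and $\partial_\xi G$, and the same singular-kernel manipulation as above, one arrives at an estimate of the form
\begin{equation*}
\norme{\mathcal T(v_1,\bar u_1)-\mathcal T(v_2,\bar u_2)}_{L^\infty}\leqslant K(\tau_0)\sqrt{\tau_0}\,\norme{(v_1-v_2,\bar u_1-\bar u_2)}_{L^\infty},
\end{equation*}
where $K(\tau_0)$ is non-decreasing in $\tau_0$. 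Shrinking $\tau_0$ further makes $\mathcal T$ a strict contraction on $\mathcal B_{\tau_0,m}$, so Banach's theorem yields a unique fixed point, which together with the associated $\gamma$ gives a local solution of \eqref{du}--\eqref{du_hast}--\eqref{du_hast_mich} on $[0,\tau_0]$.

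The maximal solution is then obtained by the standard extension procedure: whenever $v,\gamma,\bar u$ remain finite up to some time $\tau'$, restarting the construction at $\tau'$ produces a prolongation of length depending only on $\Phi,W,\beta$ and on the bounds of $v,\gamma,\bar u$ at $\tau'$ (the change of variables discussed before the proposition absorbs the nonzero initial boundary position). Iterating and taking the supremum of all existence times yields $\tau^*$ together with the blow-up characterisation \eqref{eq:time_max}; uniqueness is inherited from uniqueness of each local step. The main technical obstacle is the treatment of the singular kernel $\partial_\xi G(\gamma(x,\tau),\tau,\gamma(x,\eta),\eta)$: its $(\tau-\eta)^{-\hf}$ singularity is tame only because $\gamma$ is uniformly Lipschitz in $\tau$, which is precisely the reason $\gamma$ is handled separately through Lemma~\ref{lm:gamma} rather than being included as a third coordinate in the fixed-point space.
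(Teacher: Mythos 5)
Your proposal is correct and follows essentially the same route as the paper: a Banach fixed point argument for $(v,\bar u)$ on a ball of $C^0(\mathbb T^d\times[0,\tau_0])^2$, with $\gamma$ handled separately through Lemma \ref{lm:gamma} so that its uniform Lipschitz bound in $\tau$ tames the $(\tau-\eta)^{-\hf}$ singularity of $\partial_\xi G$, followed by the standard extension-to-maximal-solution argument with the shift of the boundary position at each restart. The only quantitative point to watch is that the Duhamel formula for $v$ carries a factor $2$ in front of the initial-data term, so the ball radius $m$ must exceed $2\norme{u^0}_{L^\infty}$ rather than just $\norme{u^0}_{L^\infty}$; your ``$m$ larger than the natural initial data norms'' covers this as long as it is read generously.
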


\begin{proof}
For any $\tau_0\in(0,1]$ and $m\in(0,+\infty)$, consider the space
    \[  \mathcal C_{\tau_0,m} = \{ \textbf{w} \in C^0(\mathbb T^d \times [0,\tau_0])^2\ | \  \norme{\textbf{w}}_{\infty} < m\},  \]
where for any $\textbf{w}=(v,\bar u)$, we denote $\norme{\textbf{w}}_\infty = \max(\ \norme{v}_{L^{\infty}(\mathbb T^d\times [0,\tau_0])}\ ,\  \norme{\bar u}_{L^{\infty}(\mathbb T^d\times [0,\tau_0])}  \ )$.
Let us choose
\[ m = \max\left(\ 2\norme{u^0}_{L^\infty(\mathbb T^d \times \R_+)}\ ,\ \norme{z u^0}_{L^\infty(\mathbb T^d, L^1(\R_+))}\ \right) + 1.  \]
We are going to proceed with the following steps.
\begin{enumerate}
    \item Compute some preliminary upper bounds for quantities which appear a lot in the next steps.
    \item Prove that with the choice of $m$ above, if $\tau$ is small enough then $\mathcal T ( \mathcal C_{\tau_0,m}) \subset \mathcal C_{\tau_0,m} $.
    \item Prove that if $\tau_0$ is small enough, $\mathcal T$ is a contraction on $C_{\tau_0,m}$, \textit{i.e.} there exists $k\in(0,1)$ such that
    \[ \forall w_1,w_2\in \mathcal C_{\tau_0,m}, \quad \norme{\mathcal T(w_1)-\mathcal T(w_2)}_\infty \leqslant k\norme{w_1-w_2}_\infty \]
    \item Apply the Banach fixed point theorem on $C_{\tau_0,m}$ and conclude the existence of a local-in-time solution, then conclude that there is a maximal solution and a naturally associated criterion for finite maximal time.\\
\end{enumerate}

{\bf Step 1. Derive preliminary estimates.-}   By Lemma \ref{lm:gamma} and owing to $\norme{\bar u}_{L^{\infty}(\mathbb T^d\times [0,\tau_0])} \leqslant m$, we have the Lipschitz bound
    \begin{equation}\label{eq:lipsch_boundary}
        |\gamma(x,\tau) - \gamma(x,\eta)| \leqslant C_\Psi | \tau - \eta |.
    \end{equation}
    Moreover, we compute
    \begin{equation}\label{eq:partial_G}
        \left| \dfrac{\partial G}{\partial \xi} (z,\tau,\xi,\eta)\right| = \dfrac{1}{2\sqrt{4\pi}}\dfrac{|z - \xi|}{|\tau-\eta|^{\frac{3}{2}}}e^{-\frac{(z-\xi)^2}{4(\tau-\eta)}}.
    \end{equation}
    Combined with the estimate \eqref{eq:lipsch_boundary}, it yields
    \begin{equation}\label{eq:int_partial_G}
        \begin{array}{rcl}
        \displaystyle\int_0^\tau \left| \dfrac{\partial G}{\partial \xi}(\gamma(x,\tau),\tau,\gamma(x,\eta),\eta)\right| d\eta &\leqslant&\displaystyle \dfrac{C_\Psi}{2\sqrt{4\pi}} \int_0^\tau \dfrac{1}{\sqrt{\tau-\eta}}d \eta\\[4mm]
        &=& \dfrac{C_\Psi}{2\sqrt{\pi}} \sqrt{\tau}.\end{array}
    \end{equation}
    Then, given $\bar u_1,\bar u_2\in \mathcal C_{\tau_0,m}$, $\gamma_1,\gamma_2$ the associated boundary positions and denoting for $i=1,2$,
    \begin{align}\label{eq:psi}
    \Psi_i =  \Phi\big( \alpha(\tau) W \ast [ \bar u_i (x,\tau) - \gamma_i(x,\tau)] + \beta(\tau)\big)\alpha(\tau), 
    \end{align}
    we can apply Lemma \ref{lm:gamma} and parts of its proof in order to conclude that there exists $K_\Psi>0$ depending only on $\Phi,\norme{W}_{L^1},\beta,m$ such that
    \begin{equation}\label{eq:Psi_1-Psi_2}
        \norme{\Psi_1-\Psi_2}_{L^\infty(\mathbb T^d\times[0,\tau_0])} \leqslant K_\Psi \norme{\bar u_1 - \bar u_2}_{L^\infty(\mathbb T^d \times [0,\tau_0])}.
    \end{equation}
    In view of the formula \eqref{du_hast} for $\bar u$, we also pre-compute some of the integrals involved in order to make our life more convenient later on. Let us recall first that, given the error function
    \[\erf(z) = \dfrac{2}{\sqrt\pi}\int_0^z e^{-y^2}d y \in (-1,1), \]
   for all $\sigma>0, \mu\in\R$, we have
    \begin{equation}\label{eq:error} \int_0^{+\infty} e^{-\frac{(y-\mu)^2}{2\sigma}} dy = \sqrt{\dfrac{\pi\sigma}{2}}\left(1+ \erf\left( \dfrac{\mu}{\sqrt{2\sigma}}  \right)\right) \,.
    \end{equation}
    Using \eqref{eq:error}, we compute
    \begin{align*}  \int_{\gamma(x,\tau)}^{+\infty} zG(z,\tau, \xi ,\eta)dz
      &=  \int_{\gamma(x,\tau)}^{+\infty} z\dfrac{1}{\sqrt{4\pi(\tau-\eta)}}e^{-\frac{(z-\xi)^2}{4(\tau-\eta)}}dz
    \\
    &= \dfrac{\sqrt{\tau-\eta}}{\sqrt\pi}\left( \int_{\gamma(x,\tau)}^{+\infty} 2(z-\xi)\dfrac{1}{4(\tau-\eta)}e^{-\frac{(z-\xi)^2}{4(\tau-\eta)}}dz\right.\\ &\qquad \qquad \qquad \qquad \qquad \qquad + \left. \int_{\gamma(x,\tau)}^{+\infty} 2\xi\dfrac{1}{4(\tau-\eta)}e^{-\frac{(z-\xi)^2}{4(\tau-\eta)}}dz\right)\\
    &= \dfrac{\sqrt{\tau-\eta}}{\sqrt\pi}\left( \left[ - e^{-\frac{(z-\xi)^2}{4(\tau-\eta)}}  \right]_{z = \gamma(x,\tau)}^{z = +\infty} + \dfrac{\xi}{2(\tau-\eta)} \int_0^{+\infty} e^{-\frac{(\omega + \gamma(x,\tau) - \xi)^2}{4(\tau-\eta)}} d\omega \right) \\
    &= \dfrac{\sqrt{\tau-\eta}}{\sqrt{\pi}} e^{-\frac{(\xi-\gamma(x,\tau))^2}{4(\tau-\eta)}} + \dfrac{\xi\sqrt{\pi(\tau-\eta)}}{2\sqrt{\pi(\tau-\eta)}} \left(1+\erf\left(\dfrac{\xi-\gamma(x,\tau)}{2\sqrt{\tau-\eta}}\right)\right),
    \end{align*}
    which yields
    \begin{equation}\label{eq:brick1}
        \int_{\gamma(x,\tau)}^{+\infty} zG(z,\tau, \xi ,\eta)dz =  \sqrt{\dfrac{\tau-\eta}{\pi}} e^{-\frac{(\xi-\gamma(x,\tau))^2}{4(\tau-\eta)}} + \dfrac{\xi}{2} \left(1+\erf\left(\dfrac{\xi-\gamma(x,\tau)}{2\sqrt{\tau-\eta}}\right)\right).
    \end{equation}
    We also compute by integration by parts and application of \eqref{eq:error},
    \begin{align*}
         \int_{\gamma(x,\tau)}^{+\infty} & z \dfrac{\partial G}{\partial \xi}(z,\tau,\gamma(x,\eta),\eta) dz\\
        &=\int_{\gamma(x,\tau)}^{+\infty} z \dfrac{2(z-\gamma(x,\eta))}{\big(4\pi(\tau-\eta)\big)^{\frac32}} e^{-\frac{(z-\gamma(x,\eta))^2}{4(\tau-\eta)}} dz \\ &= \dfrac{1}{\sqrt{4\pi(\tau-\eta)}}\int_{\gamma(x,\tau)}^{+\infty} z \dfrac{2 (z-\gamma(x,\eta))}{4(\tau-\eta)} e^{-\frac{(z-\gamma(x,\eta))^2}{4(\tau-\eta)}} dz\\
     &=\dfrac{-1}{\sqrt{4\pi(\tau-\eta)}}\left(\left[ z e^{-\frac{(z-\gamma(x,\eta))^2}{4(\tau-\eta)}}  \right]_{z = \gamma(x,\tau)}^{z=+\infty} - \int_{\gamma(x,\tau)}^{+\infty} e^{-\frac{(z-\gamma(x,\eta))^2}{4(\tau-\eta)}} dz\right) \\
     &=\dfrac{1}{\sqrt{4\pi(\tau-\eta)}}\left(\gamma(x,\tau) e^{-\frac{(\gamma(x,\tau)-\gamma(x,\eta))^2}{4(\tau-\eta)}}   + \int_{0}^{+\infty} e^{-\frac{(\omega + \gamma(x,\tau) - \gamma(x,\eta))^2}{4(\tau-\eta)}} d\omega\right),
    \end{align*}
    which in combination with \eqref{eq:error} becomes
    \begin{multline}\label{eq:brick2}
        \quad \int_{\gamma(x,\tau)}^{+\infty} z \dfrac{\partial G}{\partial \xi}(z,\tau,\gamma(x,\eta),\eta) dz = \dfrac{1}{\sqrt{4\pi(\tau-\eta)}}\gamma(x,\tau) e^{-\frac{(\gamma(x,\tau)-\gamma(x,\eta))^2}{4(\tau-\eta)}}  \\ + \dfrac12\left(1+\erf\left( \dfrac{\gamma(x,\eta)-\gamma(x,\tau)}{2\sqrt{\tau-\eta}}\right)\right).
    \end{multline}
    Last, we want to estimate terms of the form
    \begin{align*}
        \left| G(\gamma_1(x,\tau),\tau, \gamma_1(x,\eta) ,\eta) \right.&-\left. G(\gamma_2(x,\tau),\tau, \gamma_2(x,\eta) ,\eta)   \right| \\
        =&\, \dfrac{1}{\sqrt{4\pi (\tau-\eta)}} \left| e^{-\frac{(\gamma_1(x,\tau)-\gamma_1(x,\eta))^2}{4(\tau-\eta)}} - e^{-\frac{(\gamma_2(x,\tau)-\gamma_2(x,\eta))^2}{4(\tau-\eta)}} \right|.  
    \end{align*} 
    Let us introduce for this purpose the function $(x,\tau)\mapsto \lambda(x,\tau) = \gamma_1(x,\tau)-\gamma_2(x,\tau)$, which satisfies $\partial_\tau \lambda = \Psi_1 - \Psi_2$, where $\Psi_i$ is defined in \eqref{eq:psi}. We apply the Lipschitz inequality \eqref{eq:lipsch_boundary}, the mean value theorem to $\lambda$ and \eqref{eq:Psi_1-Psi_2}, leading to 
    \begin{align*}
        \left|(\gamma_1(x,\tau)-\gamma_1(x,\eta))^2\right.&-\left.(\gamma_2(x,\tau)-\gamma_2(x,\eta))^2\right|\\
        =&\,\big| \gamma_1(x,\tau) - \gamma_1(x,\eta) + \gamma_2(x,\eta) - \gamma_2(x,\tau) \big|\,\big| \lambda(x,\tau) - \lambda(x,\eta) \big|\\
        \leqslant&\, 2 C_\Psi | \tau - \eta | \norme{\frac{\partial \lambda}{\partial\tau}}_{L^\infty(\mathbb T^d\times[0,\tau_0])} |\tau-\eta|\\
        \leqslant&\, 2 C_\Psi \norme{\Psi_1-\Psi_2}_{L^\infty(\mathbb T^d\times[0,\tau_0])} |\tau-\eta|^2 \\
        \leqslant &\, 2 C_\Psi K_\Psi \norme{\bar u_1 -\bar u_2}_{L^\infty(\mathbb T^d\times[0,\tau_0])} |\tau-\eta| \tau_0.
    \end{align*}
    Since the exponential function is $1$-Lipschitz on $(-\infty,0]$, if we denote $ C_\lambda= \tfrac12 C_\Psi K_\Psi  $, the previous bound yields
    \begin{align}
        \left| e^{-\frac{(\gamma_1(x,\tau)-\gamma_1(x,\eta))^2}{4(\tau-\eta)}} - e^{-\frac{(\gamma_2(x,\tau)-\gamma_2(x,\eta))^2}{4(\tau-\eta)}} \right|\,
        \leqslant \, C_\lambda\, \tau_0 \norme{\textbf{w}_1 - \textbf{w}_2}_\infty.\label{eq:brick3a}
    \end{align}
    We have thus proved
    \begin{equation}\label{eq:brick3}
        \left| G(\gamma_1(x,\tau),\tau, \gamma_1(x,\eta) ,\eta) - G(\gamma_2(x,\tau),\tau, \gamma_2(x,\eta) ,\eta)   \right| 
        \leqslant  \dfrac{C_\lambda\, \tau_0 }{\sqrt{4\pi (\tau-\eta)}}\norme{\textbf{w}_1 - \textbf{w}_2}_\infty.
    \end{equation}
    
    {\bf Step 2. Prove that $\mathcal T ( \mathcal C_{\tau_0,m}) \subset \mathcal C_{\tau_0,m} $ for $\tau_0$ small enough.-} Consider $m$ defined in the beginning of the proof.
    Let $\textbf{w}=(v,\bar u)\in C_{\tau_0,m}$ and denote $\mathcal T(\textbf{w}) = (\tilde v,\tilde u)$.
    
    We use the decomposition
    \[ \tilde v = 2( V_1  + V_2), \]
    with
    \begin{align*} V_1 &= \int_{0}^{+\infty} G(\gamma(x,\tau),\tau, \xi ,0) u^0(x,\xi) d\xi, \\ V_2&= \int_{0}^{\tau}\dfrac{\partial G}{\partial \xi}(\gamma(x,\tau),\tau,\gamma(x,\eta),\eta) v(x,\eta) d\eta. \end{align*}
    We have first
    \[ |V_1| \leqslant \norme{u^0}_\infty \int_{0}^{+\infty} G(\gamma(x,\tau),\tau, \xi ,0) d\xi \leqslant \norme{u^0}_\infty . \]
    Then, owing to \eqref{eq:int_partial_G}, we can write
    \begin{align*}|V_2| &\leqslant  m \int_{0}^\tau  \left| \dfrac{\partial G}{\partial \xi}(\gamma(x,\tau),\tau,\gamma(x,\eta),\eta)\right| d \eta \leqslant  \dfrac{mC_\Psi}{2\sqrt{\pi}} \sqrt{\tau_0}.\end{align*}
    In the same vein, we write for the second component
    \[ \tilde u = U_1 + U_2, \]
    where
    \begin{align*} U_1 &= \int_{0}^{+\infty} \int_{\gamma(x,\tau)}^{+\infty} zG(z,\tau, \xi ,0)dz u^0(x,\xi) d\xi,
    \\  U_2&= \int_{0}^{\tau} \int_{\gamma(x,\tau)}^{+\infty} z \dfrac{\partial G}{\partial \xi}(z,\tau,\gamma(x,\eta),\eta) dz v(x,\eta) d\eta. \end{align*}
    Using \eqref{eq:brick1} with $\eta=0$ and \eqref{eq:error}, we can perform the bounds
    \begin{align*} |U_1| &\leqslant \norme{u^0}_\infty \int_{0}^{+\infty} \sqrt{\dfrac{\tau}{\pi}} e^{-\frac{(\xi-\gamma(x,\tau))^2}{4\tau}} d\xi + \int_{0}^{+\infty} \left|\dfrac{\xi}{2} \left(1+\erf\left(\dfrac{\xi-\gamma(x,\tau)}{2\sqrt{\tau}}\right)\right) u^0(x,\xi) \right|d\xi
    \\ 
    &\leqslant \norme{u^0}_\infty \sqrt{\dfrac{\tau}{\pi}}\int_{0}^{+\infty} e^{-\frac{(\xi-\gamma(x,\tau))^2}{4\tau}}   d\xi + \int_{0}^{+\infty} \left|\xi u^0(x,\xi)\right| d\xi\\
    &\leqslant \norme{u^0}_\infty\sqrt{\dfrac{\tau}{\pi}} \sqrt{\pi\tau}\left(1+ \erf\left( \dfrac{\gamma(x,\tau)}{\sqrt{4\tau}}  \right)\right) + \sup_{x\in \mathbb T^d}\int_{0}^{+\infty} \left|\xi u^0(x,\xi)\right| d\xi\\
    &\leqslant 2\norme{u^0}_\infty \tau_0 + \sup_{x\in \mathbb T^d}\int_{0}^{+\infty} \left|\xi u^0(x,\xi)\right| d\xi. \end{align*}
    Eventually, we have
    \begin{equation*} |U_2|\leqslant m \int_{0}^{\tau} \left| \int_{\gamma(x,\tau)}^{+\infty} z \dfrac{\partial G}{\partial \xi}(z,\tau,\gamma(x,\eta),\eta) dz\right| d\eta. \end{equation*}
    By \eqref{eq:brick2}, 
    \[ \left| \int_{\gamma(x,\tau)}^{+\infty} z \dfrac{\partial G}{\partial \xi}(z,\tau,\gamma(x,\eta),\eta) dz \right| \leqslant \dfrac{|\gamma(x,\tau)|}{\sqrt{4\pi(\tau-\eta)}} + 1 \leqslant \dfrac{\norme{\gamma}_{L^\infty(\mathbb T^d \times [0,\tau_0])}}{\sqrt{4\pi(\tau-\eta)}} + 1 , \]
    Thereby, by Lemma \ref{lm:gamma} we conclude
    \[ |U_2| \leqslant \dfrac{m^2}{\sqrt\pi} \sqrt{\tau_0} + m\tau_0. \]
    
    Collecting the four previous bounds for $V_1,V_2,U_1,U_2$, we obtain
    \[ \norme{\tilde v}_{L^\infty(\mathbb T^d \times [0,\tau_0])} \leqslant 2 \norme{u^0}_{\infty} + \dfrac{mC_\Psi}{2\sqrt\pi} \sqrt{\tau_0} \]
    and
    \begin{align*}
    \norme{\tilde u}_{L^\infty(\mathbb T^d \times [0,\tau_0])} \leqslant  \norme{z u^0}_{L^\infty(\mathbb T^d,L^1(\R_+))} + \left(  \dfrac{m^2}{\sqrt\pi} +  \left[ 2 \norme{u^0}_{\infty}  + m  \right]\sqrt{\tau_0} \right)\sqrt{\tau_0}.    
    \end{align*} 
    Therefore, if $\tau_0$ is chosen small enough, $\norme{\mathcal T(\textbf{w})}_\infty \leqslant m$ and $\mathcal T ( \mathcal C_{\tau_0,m}) \subset \mathcal C_{\tau_0,m} $.
    
    {\bf Step 3. Prove that $\mathcal T$ is a contraction on $\mathcal C_{\tau_0,m}$ for $\tau_0$ small enough.-}\\
    Let $\textbf{w}_1=(v_1,\bar u_1)\in C_{\tau_0,m}$ and $\textbf{w}_2=(v_2,\bar u_2)\in C_{\tau_0,m}$ two initial elements of our functional space with their associated boundary functions $\gamma_1, \gamma_2$ and let us denote $\mathcal T(\textbf{w}_1) = (\tilde v_1,\tilde u_1)$, $\mathcal T(\textbf{w}_2) = (\tilde v_2,\tilde u_2)$.
    
    We use the same decompositions as in step 2:
    \[ \tilde v_1 = 2(V_1^1  + V_2^1),\qquad \tilde u_1 = U_1^1  + U_2^1,  \]
    and
    \[ \tilde v_2 = 2(V_1^2  + V_2^2),\qquad \tilde u_2 = U_1^2  + U_2^2.  \]
    We are going to pair each of these terms and prove that each one of those pairs can be bounded from above, for $\tau_0$ small enough, by $k\norme{\textbf{w}_1 - \textbf{w}_2}_\infty$ with $k=\frac18$ or $k=\frac14$.
    Our first pair is
    \[ V_1^1 - V_1^2 = \int_{0}^{+\infty} G(\gamma_1(x,\tau),\tau, \xi ,0) u^0(x,\xi) - \int_{0}^{+\infty} G(\gamma_2(x,\tau),\tau, \xi ,0) u^0(x,\xi) d\xi d\xi. \]
    We bound it from above,
    \begin{equation*}
       |V_1^1 - V_1^2| = \norme{u^0}_\infty\int_{0}^{+\infty} \left| G(\gamma_1(x,\tau),\tau, \xi ,0) - G(\gamma_2(x,\tau),\tau, \xi ,0)   \right|  d\xi.
    \end{equation*}
    By the mean value theorem, for all $\xi\in\R_+$, for all $\tau\in[0,\tau_0]$, for all $x\in\mathbb T^d$, there exists $\mu\in[\min(\gamma_1(x,\tau),\gamma_2(x,\tau)),\max(\gamma_1(x,\tau),\gamma_2(x,\tau))]$ such that
    \[ \left| G(\gamma_1(x,\tau),\tau, \xi ,0) - G(\gamma_2(x,\tau),\tau, \xi ,0)   \right| \leqslant \left|\dfrac{\partial G}{\partial z}(\mu,\tau, \xi ,0)\right| |\gamma_1(x,\tau)-\gamma_2(x,\tau)|.   \]
    Furthermore, using the inequality $ye^{-y^2}\leqslant e^{-\frac{y^2}{2}}$, we have
    \[ \left|\dfrac{\partial G}{\partial z}(\mu,\tau, \xi ,0)\right| = \dfrac{|\xi-\mu|}{4\sqrt \pi \tau^{\frac32}} e^{-\frac{(\xi-\mu)^2}{4\tau}} \leqslant \dfrac{1}{2\sqrt \pi \tau} e^{-\frac{(\xi-\mu)^2}{8\tau}} = \dfrac{\sqrt2}{\sqrt\tau} G(\mu,2\tau,\xi,0). \]
    The three previous inequalities, Lemma \ref{lm:gamma} and
    $ \int_0^{+\infty}G(\mu,2\tau,\xi,0) d\xi \leqslant 1 $
    yield the following bounds
    \begin{align*} |V_1^1 - V_1^2|   & \leqslant \norme{u_0}_\infty \frac{\sqrt 2}{\sqrt \tau} \int_0^{+\infty}G(\mu,2\tau,\xi,0) d\xi \norme{\gamma_1 - \gamma_2}_{L^{\infty}(\mathbb T^d\times [0,\tau])} \\
    &\leqslant \dfrac{\sqrt 2\norme{u_0}_\infty }{\sqrt \tau}\, \tau\, C_\Psi\norme{\bar u_1 - \bar u_2 }_{L^{\infty}(\mathbb T^d\times [0,\tau_0])}\leqslant \sqrt 2\norme{u_0}_\infty  C_\Psi \sqrt{\tau_0} \norme{\textbf{w}_1 - \textbf{w}_2}_\infty.\end{align*}
    Hence, choosing $\tau_0$ small enough, we can ensure that
    \begin{equation}\label{eq:b1}
        |V_1^1 - V_1^2| \leqslant \dfrac1{8} \norme{\textbf{w}_1 - \textbf{w}_2}_\infty.
    \end{equation}

    The second pair is none other than
    \begin{align*} V_2^1 - V_2^2 =&\, \int_{0}^{\tau} \dfrac{\partial G}{\partial \xi}(\gamma_1(x,\tau),\tau,\gamma_1(x,\eta),\eta) v_1(x,\eta)  d\eta\\ &- \int_{0}^{\tau} \dfrac{\partial G}{\partial \xi}(\gamma_2(x,\tau),\tau,\gamma_2(x,\eta),\eta) v_2(x,\eta)  d\eta, 
    \end{align*}
    that we readily decompose into $
     V_2^1 - V_2^2 = J_a + J_b $,
    with
    \begin{align*}
        J_a &= \int_{0}^{\tau}\left( \dfrac{\partial G}{\partial \xi}(\gamma_1(x,\tau),\tau,\gamma_1(x,\eta),\eta)- \dfrac{\partial G}{\partial \xi}(\gamma_2(x,\tau),\tau,\gamma_2(x,\eta),\eta)\right) v_1(x,\eta)  d\eta,\\
        J_b &= \int_{0}^{\tau} \dfrac{\partial G}{\partial \xi}(\gamma_2(x,\tau),\tau,\gamma_2(x,\eta),\eta) \big(v_1(x,\eta)-v_2(x,\eta)\big)  d\eta.
    \end{align*}
    We first write
    \[ |J_a| \leqslant m \int_{0}^{\tau}\left| \dfrac{\partial G}{\partial \xi}(\gamma_1(x,\tau),\tau,\gamma_1(x,\eta),\eta)- \dfrac{\partial G}{\partial \xi}(\gamma_2(x,\tau),\tau,\gamma_2(x,\eta),\eta)\right|  d\eta. \]
    We notice that
    \begin{align*}
        \left| \dfrac{\partial G}{\partial \xi}\right.&\left.(\gamma_1(x,\tau),\tau,\gamma_1(x,\eta),\eta) - \dfrac{\partial G}{\partial \xi}(\gamma_2(x,\tau),\tau,\gamma_2(x,\eta),\eta)\right| \\
        =&\, \dfrac{1}{2} \Big| \dfrac{\gamma_1(x,\tau)-\gamma_1(x,\eta)}{\tau-\eta} G(\gamma_1(x,\tau),\tau,\gamma_1(x,\eta),\eta) \\ & \qquad -  \dfrac{\gamma_2(x,\tau)-\gamma_2(x,\eta)}{\tau-\eta} G(\gamma_2(x,\tau),\tau,\gamma_2(x,\eta),\eta) \Big|  \\
        \leqslant &\,  \dfrac{1}{2} \left| \dfrac{(\gamma_1(x,\tau)-\gamma_1(x,\eta)) - (\gamma_2(x,\tau)-\gamma_2(x,\eta))}{\tau-\eta}\right| G(\gamma_1(x,\tau),\tau,\gamma_1(x,\eta),\eta) \\
        &\,  +  \dfrac{|\gamma_2(x,\tau)-\gamma_2(x,\eta)|}{2(\tau-\eta)} \big| G(\gamma_1(x,\tau),\tau,\gamma_1(x,\eta),\eta) - G(\gamma_2(x,\tau),\tau,\gamma_2(x,\eta),\eta) \big|.
    \end{align*}
    First, by the mean value theorem and inequality \eqref{eq:Psi_1-Psi_2}, there exists $\mu\in[\eta,\tau]$ such that
    \begin{align*}
         \frac12\left| \dfrac{(\gamma_1(x,\tau)-\gamma_1(x,\eta)) - (\gamma_2(x,\tau)-\gamma_2(x,\eta))}{\tau-\eta}\right|
        =&\, | \Psi_1(x,\mu) - \Psi_2(x,\mu)|\\
        \leqslant& \, \frac{K_\Psi}2  \norme{\bar u_1 - \bar u_2}_{L^\infty(\mathbb T^d \times [0,\tau_0])}.
    \end{align*}
    Then, by the Lipschitz estimate \eqref{eq:lipsch_boundary} and \eqref{eq:brick3},
    \begin{align*}
        & \dfrac{|\gamma_2(x,\tau)-\gamma_2(x,\eta)|}{2(\tau-\eta)} \big| G(\gamma_1(x,\tau),\tau,\gamma_1(x,\eta),\eta) - G(\gamma_2(x,\tau),\tau,\gamma_2(x,\eta),\eta) \big| \\
        \leqslant&\, \dfrac{C_\Psi C_\lambda\, \tau_0 }{2\sqrt{4\pi (\tau-\eta)}}\norme{\textbf{w}_1 - \textbf{w}_2}_\infty.
    \end{align*}
    Integrating in time the two final bounds of the difference of the derivatives of $G$, we find
   \[ |J_a| \leqslant \dfrac{m}{2\sqrt\pi}\left(  C_\Psi  C_\lambda  \tau_0 +    K_\Psi \right)\sqrt\tau_0 \norme{\textbf{w}_1 - \textbf{w}_2}_\infty. \]
   The bound of $J_b$ falls without much fight under the fire of our preliminary inequality \eqref{eq:int_partial_G}:
    \[ 
    |J_b| \leqslant \dfrac{C_\Psi}{2\sqrt{\pi}} \sqrt{\tau_0} \norme{\textbf{w}_1-\textbf{w}_2}_\infty \,. 
    \]
Taking $\tau_0$ small enough with respect to the constants involved in the $J_a$ and $J_b$ bounds shall suffice to obtain
    \begin{equation}\label{eq:b3}
        |V_2^1 - V_2^2| \leqslant \dfrac1{8} \norme{\textbf{w}_1 - \textbf{w}_2}_\infty.
    \end{equation}
    
    Spawning now on this page with a double spatial integral, the third pair is
    \begin{align*} U_1^1 - U_1^2 = &\,\int_{0}^{+\infty} \int_{\gamma_1(x,\tau)}^{+\infty} zG(z,\tau, \xi ,0)dz u^0(x,\xi) d\xi\\ &- \int_{0}^{+\infty} \int_{\gamma_2(x,\tau)}^{+\infty} zG(z,\tau, \xi ,0)dz u^0(x,\xi) d\xi.  \end{align*}
    We first notice that by \eqref{eq:Psi_1-Psi_2},
    \[  |\gamma_1(x,\tau) - \gamma_2(x,\tau)| \leqslant \int_0^\tau |\Psi_1(x,\eta)-\Psi_2(x,\eta)|d\eta \leqslant K_\Psi \tau \norme{\bar u_1 - \bar u_2}_{L^\infty(\mathbb T^d\times[0,\tau_0])}.  \]
    Then, bearing in mind that by Lemma \ref{lm:gamma}, $\norme{\gamma_i}_{L^\infty(\mathbb T^d\times[0,\tau_0])} \leqslant m $ , $i=1,2$, we compute
    \begin{align*}
        |U_1^1 - U_1^2| &= \left| \int_{0}^{+\infty} \int_{\gamma_1(x,\tau)}^{\gamma_2(x,\tau)} zG(z,\tau, \xi ,0)dz u^0(x,\xi) d\xi \right|\\
                        & \leqslant \int_{0}^{+\infty} u^0(x,\xi) d\xi  |\gamma_1(x,\tau) - \gamma_2(x,\tau)| \sup_{(z,\xi)\in [\gamma_1(x,\tau),\gamma_2(x,\tau)]\times \R_+ }  \left| zG(z,\tau, \xi ,0) \right| \\
                        & \leqslant \norme{u^0}_{L^\infty(\mathbb T^d, L^1(\R_+))} K_\Psi \tau \norme{\bar u_1 - \bar u_2}_{L^\infty(\mathbb T^d\times[0,\tau_0])} \dfrac{m}{2\sqrt{\pi\tau}}\\
                        & \leqslant \norme{u^0}_{L^\infty(\mathbb T^d, L^1(\R_+))} K_\Psi \sqrt{\tau_0} \dfrac{m}{2\sqrt{\pi}}\norme{\textbf{w}_1 - \textbf{w}_2}_\infty.
    \end{align*}
    Hence, for $\tau_0$ small enough, we harvest
    \begin{equation}\label{eq:b4}
        |U_1^1 - U_1^2| \leqslant \dfrac14 \norme{\textbf{w}_1 - \textbf{w}_2}_\infty.
    \end{equation}
    
    Last but not least, our fourth pair comes under the very form
    \begin{align*}  U_2^1 - U_2^2 = &\,\int_{0}^{\tau} \int_{\gamma_1(x,\tau)}^{+\infty} z \dfrac{\partial G}{\partial \xi}(z,\tau,\gamma_1(x,\eta),\eta) dz v_1(x,\eta) d\eta \\ 
    &- \int_{0}^{\tau} \int_{\gamma_2(x,\tau)}^{+\infty} z \dfrac{\partial G}{\partial \xi}(z,\tau,\gamma_2(x,\eta),\eta) dz v_2(x,\eta) d\eta, 
    \end{align*}
    and it shall suffer the same fate as its predecessors, being divided into $U_2^1 - U_2^2 = K_a + K_b$, 
    with
    \begin{align*}
        K_a  &  = \int_{0}^{\tau} \left( \int_{\gamma_1(x,\tau)}^{+\infty} z  \dfrac{\partial G}{\partial \xi}(z,\tau,\gamma_1(x,\eta),\eta)dz -  \int_{\gamma_2(x,\tau)}^{+\infty} z \dfrac{\partial G}{\partial \xi}(z,\tau,\gamma_2(x,\eta),\eta)dz \right)  v_1(x,\eta) d\eta,\\
        K_b &= \int_{0}^{\tau} \int_{\gamma_2(x,\tau)}^{+\infty} z \dfrac{\partial G}{\partial \xi}(z,\tau,\gamma_2(x,\eta),\eta) dz \big(v_1(x,\eta)-v_2(x,\eta)\big) d\eta.
    \end{align*}
    Using \eqref{eq:brick2}, the fact that $\erf$ is $1$-Lipschitz and $|\gamma_2(x,\tau)|\leqslant m$, we obtain  
    \begin{align*}
        &\left| \int_{\gamma_1(x,\tau)}^{+\infty} z  \dfrac{\partial G}{\partial \xi}(z,\tau,\gamma_1(x,\eta),\eta)dz -  \int_{\gamma_2(x,\tau)}^{+\infty} z \dfrac{\partial G}{\partial \xi}(z,\tau,\gamma_2(x,\eta),\eta)dz \right|\\
        &\leqslant \, \dfrac{1}{\sqrt{4\pi(\tau-\eta)}}\left|\gamma_1(x,\tau) e^{-\frac{(\gamma_1(x,\tau)-\gamma_1(x,\eta))^2}{4(\tau-\eta)}} - \gamma_2(x,\tau) e^{-\frac{(\gamma_2(x,\tau)-\gamma_2(x,\eta))^2}{4(\tau-\eta)}}\right|   \\
              &  \qquad + \frac12 \left| \erf\left( \dfrac{\gamma_1(x,\eta)-\gamma_1(x,\tau)}{2\sqrt{\tau-\eta}}\right) - \erf\left( \dfrac{\gamma_2(x,\eta)-\gamma_2(x,\tau)}{2\sqrt{\tau-\eta}}\right) \right| \\
              &\leqslant \dfrac{1}{\sqrt{4\pi(\tau-\eta)}}\bigg(|\gamma_1(x,\tau) - \gamma_2(x,\tau) |\, e^{-\frac{(\gamma_1(x,\tau)-\gamma_1(x,\eta))^2}{4(\tau-\eta)}} \\&\qquad\qquad\qquad\qquad\qquad+ |\gamma_2(x,\tau)|\,\left| e^{-\frac{(\gamma_1(x,\tau)-\gamma_1(x,\eta))^2}{4(\tau-\eta)}} -  e^{-\frac{(\gamma_2(x,\tau)-\gamma_2(x,\eta))^2}{4(\tau-\eta)}}\right|\bigg)   \\
              &  \qquad + \frac{1}{\sqrt{\pi}}  \dfrac{\left| \gamma_1(x,\tau)-\gamma_2(x,\tau)+ \gamma_1(x,\eta)-\gamma_2(x,\eta)\right|}{2\sqrt{\tau-\eta}}\\
              &\leqslant \dfrac{1}{\sqrt{4\pi(\tau-\eta)}}\left(|\gamma_1(x,\tau) - \gamma_2(x,\tau) | + m\left| e^{-\frac{(\gamma_1(x,\tau)-\gamma_1(x,\eta))^2}{4(\tau-\eta)}} -  e^{-\frac{(\gamma_2(x,\tau)-\gamma_2(x,\eta))^2}{4(\tau-\eta)}}\right|\right)   \\
              &  \qquad + \frac{1}{\sqrt{\pi}}   \dfrac{\left| \gamma_1(x,\tau)-\gamma_2(x,\tau)| + |\gamma_1(x,\eta)-\gamma_2(x,\eta)\right|}{2\sqrt{\tau-\eta}}
    \end{align*}
    Then, applying Lemma \ref{lm:gamma} and \eqref{eq:brick3a}, we progress to
    \begin{align*}
        &\left| \int_{\gamma_1(x,\tau)}^{+\infty} z  \dfrac{\partial G}{\partial \xi}(z,\tau,\gamma_1(x,\eta),\eta)dz -  \int_{\gamma_2(x,\tau)}^{+\infty} z \dfrac{\partial G}{\partial \xi}(z,\tau,\gamma_2(x,\eta),\eta)dz \right|\\
              &\leqslant \dfrac{1}{\sqrt{4\pi(\tau-\eta)}}\left(\tau\, C_\Psi\norme{\bar u_1 - \bar u_2 }_{L^{\infty}(\mathbb T^d\times [0,\tau_0])} + m C_\lambda\, \tau_0 \norme{\textbf{w}_1 - \textbf{w}_2}_\infty\right)   \\
              &  \qquad + \frac{1}{\sqrt{\pi}}   \dfrac{\,2\,\tau_0\, C_\Psi\norme{\bar u_1 - \bar u_2 }_{L^{\infty}(\mathbb T^d\times [0,\tau_0])}}{2\sqrt{\tau-\eta}}  
              \\
              &\leqslant \dfrac{\tau_0}{2\sqrt{\pi(\tau-\eta)}}\left[ 3 C_\Psi+mC_\lambda\right]\norme{\textbf{w}_1 - \textbf{w}_2}_\infty .
    \end{align*}
    Integrating over $[0,\tau]$ the last expression, we obtain
    \[  |K_a| \leqslant \frac{1}{\sqrt\pi}\left[ 3C_\Psi + m C_\lambda\right] \tau_0^\frac32 \norme{\textbf{w}_1 - \textbf{w}_2}_\infty. \]
    
    Regarding $K_b$, note that
    \[  \int_{\gamma_2(x,\tau)}^{+\infty} z \dfrac{\partial G}{\partial \xi}(z,\tau,\gamma_2(x,\eta),\eta) dz \leqslant \dfrac{\norme{\gamma_2}_{L^\infty(\mathbb T^d \times [0,\tau_0])}}{\sqrt{4\pi(\tau-\eta)}} + 1 .  \]
    Consequently, we claim
    \[   |K_b| \leqslant \left( \frac{m}{\sqrt\pi}  + \sqrt{\tau_0} \right)\sqrt{\tau_0}  \norme{\textbf{w}_1 - \textbf{w}_2}_\infty. \]
    Imposing for the last time adequate smallness upon $\tau_0$, we get down to
    \begin{equation}\label{eq:b6}
        |U_2^1 - U_2^2| \leqslant \dfrac14 \norme{\textbf{w}_1 - \textbf{w}_2}_\infty.
    \end{equation}
    
    Collecting now the fruits of our efforts in the form of inequalities \eqref{eq:b1},  \eqref{eq:b3}, \eqref{eq:b4} and \eqref{eq:b6}, we reach the conclusion we longed after:
    \begin{equation}
        \norme{\mathcal T(\textbf{w}_1) - \mathcal T(\textbf{w}_2)}_\infty \leqslant  \frac 12 \norme{\textbf{w}_1 - \textbf{w}_2}_\infty.
    \end{equation}

{\bf Step 4. Existence of a maximal solution.-} We can then apply the Banach fixed point theorem and conclude that there exists a small time $\tau_0 > 0$ depending only on $m$, $\beta$, $W$, $\Phi$ and $u^0$, and a unique local-in-time solution $(v,\gamma,\bar u)$ to \eqref{du}--\eqref{du_hast}--\eqref{du_hast_mich} on $\mathbb T^d\times [0,\tau_0]$. By successive applications of the Banach fixed point theorem (the new $u^0$ is given by application of \eqref{eq:Duhamel}) we can extend this solution to a unique maximal solution on the time interval $[0,\tau^*)$, $\tau^*\in(0,+\infty]$. As we mentioned before this proof, each time we extend the solution on the next time interval, say from $[\tau_k,\tau_{k+1}]$ to $[\tau_{k+1},\tau_{k+2}]$, we start by changing the variable $z$ so as to obtain $\gamma(\cdot,\tau_{k+1}) \equiv 0$ in order to apply our construction. We reason by contradiction and assume that both $\tau^*< +\infty$ and
\begin{equation*}
    \sup_{\tau\in [0,\tau^*)} \sup_{x\in\mathbb T^d}\max(|v(x,\tau)|,|\gamma(x,\tau)|,|\bar u(x,\tau)|) <  +\infty \ \} = \Lambda < +\infty 
\end{equation*}
hold. Then by \eqref{eq:Duhamel}, there exists a constant $C(\Lambda,u^0)$ such that for all $\tau_1\in[0,\tau^*)$, 
$$
\norme{u(\cdot,\cdot,\tau_1)}_{L^\infty(\mathbb T^d\times \R_+)}\leqslant C(\Lambda,u^0)
$$
and for all $x\in\mathbb T^d$, $\norme{z\mapsto z u(x,z,\tau_1)}_{L^\infty(\mathbb T^d,L^1(\gamma(x,\tau_1),+\infty))}\leqslant C(\Lambda, u^0)$. Then we can do the proof above with $\tilde m = K( \Lambda,u^0)$ depending only on $\Lambda$, $\Phi$, $B$, $W$, $u^0$. The existence time $\tau_2$ for the local solution arising from $u(\cdot,\cdot,\tau_1)$ depends on $K(\Lambda,u^0)$ and not on $\tau_1$; thereby, we can choose $\tau_1=\tau^*-\varepsilon$ for $\varepsilon$ positive and small enough and the initial condition $(v(x,\tau^*-\varepsilon),\gamma(x,\tau^*-\varepsilon),\bar u(x,\tau^*-\varepsilon))$ and obtain a contradiction with the maximality of $\tau^*$.
\end{proof}

We can then exploit the Duhamel formula \eqref{eq:Duhamel} and deduce the following existence result for the solution $u$ of \eqref{eq:Stefan}.

\begin{theorem}
    Assume $\Phi$ is locally Lipschitz, $\beta\in C^0(\R_+)$ and $W\in L^1(\mathbb T^d)$. For all compatible initial conditions 
    $u^0$, there exists a unique maximal local-in-time solution
    \[ u\in C^{0}(\mathbb T^d \times  \Omega_{z,\tau} )\cap C^{0,2,1}(\mathbb T^d \times \mathring\Omega_{z,\tau} )  \]
    of \eqref{eq:Stefan}, where
    \[ \Omega_{z,\tau} = \{ (z,\tau) \ | \ \gamma(x,\tau) \leqslant z < +\infty,\ 0 \leqslant \tau < \tau^*  \}. \]
    and $\mathring\Omega_{z,\tau}$ is the interior of $\Omega_{z,\tau}$. Moreover, for all $\tau\in [0,\tau^*)$, $u(\cdot,\cdot,\tau)$ and $\partial_z u(\cdot,\cdot,\tau)$ are fast-decaying.
\end{theorem}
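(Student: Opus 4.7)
The plan is to construct the solution $u$ of \eqref{eq:Stefan} directly from the Duhamel representation \eqref{eq:Duhamel} using the triplet $(v,\gamma,\bar u)$ delivered by Proposition \ref{prop:local}, and then verify the three requirements in turn: continuity on $\Omega_{z,\tau}$, $C^{0,2,1}$ regularity on the interior together with the PDE, and the Robin-type boundary condition. The first reduction is to set
\[
u(x,z,\tau) = \int_0^{+\infty} G(z,\tau,\xi,0)\,u^0(x,\xi)\,d\xi + \int_0^\tau \frac{\partial G}{\partial \xi}(z,\tau,\gamma(x,\eta),\eta)\,v(x,\eta)\,d\eta
\]
on $\mathbb T^d\times\Omega_{z,\tau}$. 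Since $v,\gamma,\bar u$ are continuous and $\gamma$ is Lipschitz in $\tau$ (by Lemma \ref{lm:gamma}), the first term is jointly continuous on $\mathbb T^d\times\R\times[0,\tau^*)$ and the second is continuous strictly above the boundary by dominated convergence using the Gaussian decay of $\partial_\xi G$.

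For the interior regularity, I would differentiate under the integral sign on any compact subset of $\mathring\Omega_{z,\tau}$. Away from the point $\eta=\tau$, $\xi=\gamma(x,\tau)$, all derivatives of $G$ are uniformly Gaussian-bounded, and any compact $K\subset\mathring\Omega_{z,\tau}$ keeps $(z,\tau)$ at strictly positive distance from the boundary curve $\{(\gamma(x,\eta),\eta)\}$, which yields uniform integrable majorants of the form $C(\tau-\eta)^{-N}e^{-c/(\tau-\eta)}$. One can then differentiate once and twice in $z$ and once in $\tau$; because $G$ itself solves the heat equation in $(z,\tau)$ with respect to the back variables, the equality $\partial_\tau u=\partial_{zz} u$ follows immediately. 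This gives the required $C^{0,2,1}$ regularity on $\mathring\Omega_{z,\tau}$ and the PDE in \eqref{eq:Stefan}.

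The most delicate step is the boundary behaviour. The limit $z\to\gamma(x,\tau)^+$ of the second term is handled with the jump formula \cite[Lem.~1, p.~217]{god} already invoked to derive \eqref{du}; combined with $v=F_v[v,\gamma,\bar u]$ from Proposition \ref{prop:local}, it yields the boundary value $u(x,\gamma(x,\tau),\tau)=v(x,\tau)$ and, in particular, continuity of $u$ up to $\partial\Omega_{z,\tau}$. To recover $\partial_z u(x,\gamma(x,\tau),\tau)=\Psi(x,\tau)v(x,\tau)$, I would apply the analogous jump relation to $\partial_z u$, using that $\partial_z G=-\partial_\xi G$ so the boundary integral is a classical single-layer potential whose normal derivative admits an explicit jump; the smooth term contributes through an integration by parts against $v$ using the Lipschitz regularity of $\gamma$. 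The $C^{0,1,1}$ regularity up to $\{z=\gamma(x,\tau)\}$ demanded by Definition \ref{def:classical} (after inverting the change of variables) then follows from standard Stefan estimates, e.g. the kind used in \cite{CGGS,CRSS}.

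Fast decay of $u(\cdot,\cdot,\tau)$ and $\partial_z u(\cdot,\cdot,\tau)$ at $z=+\infty$ is the easiest part: for any polynomial $P$, the Gaussian factors in $G$ and $\partial_\xi G$ dominate $P(z)$ at infinity, and since $u^0$ is fast-decaying, the first term inherits the decay by the standard estimate $\int G(z,\tau,\xi,0)|P(z)u^0(x,\xi)|d\xi\to 0$ as $z\to\infty$, while the boundary integral over $\eta\in[0,\tau]$ keeps $\xi=\gamma(x,\eta)$ in a bounded set and therefore decays super-polynomially. Uniqueness and maximality of $u$ follow from the uniqueness and maximality of $(v,\gamma,\bar u)$ in Proposition \ref{prop:local}, because any classical solution of \eqref{eq:Stefan} automatically generates such a triplet through the Green-identity computation that led to \eqref{eq:Duhamel}, so two solutions sharing the same $u^0$ must coincide. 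The main obstacle I expect is the careful handling of the single-layer jump for $\partial_z u$ up to the moving boundary, because one must simultaneously use the Lipschitz bound on $\gamma$ from Lemma \ref{lm:gamma} and the consistency identity $v=F_v[v,\gamma,\bar u]$ to cancel the singular contributions and obtain the precise Robin condition of \eqref{eq:Stefan}.
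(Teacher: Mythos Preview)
Your approach is essentially the same as the paper's: apply Proposition~\ref{prop:local} to obtain the maximal triplet $(v,\gamma,\bar u)$, define $u$ through the Duhamel formula \eqref{eq:Duhamel}, and read off the interior regularity from the smoothing properties of the heat kernel once $v$ and $\Psi$ are treated as fixed continuous data. The paper's proof is in fact much terser than yours---it stops at ``the properties of the heat kernel $G$ imply the interior smoothness'' and defers the boundary regularity discussion to the subsequent remark---so your explicit treatment of the jump relation for the boundary value, the fast-decay argument, and the uniqueness-via-triplet argument are all reasonable expansions of what the paper leaves implicit.
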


\begin{proof} We first apply Proposition \ref{prop:local} and obtain a maximal continuous solution $(v,\gamma,\bar u)$. Then, using the Duhamel formula \eqref{eq:Duhamel} we can recover the solution
\begin{align*}
    u(x,z,\tau) = \int_{0}^{+\infty} G(z,\tau, \xi ,0) u^0(x,\xi) d\xi + \int_{0}^{\tau} \dfrac{\partial G}{\partial \xi}(z,\tau,\gamma(x,\eta),\eta) v(x,\eta) d\eta.
\end{align*}
of \eqref{eq:Stefan}. We can now consider $v(x,\tau)$ and $\Psi(x,\tau)$ as exterior continuous functions and thus the properties of the heat kernel $G$ imply the interior smoothness.
\end{proof}

\begin{remark}\label{rm:boundary_reg}
    Note that we have also $C^1$ regularity in $z$ of the solution near the boundary: since $\Psi$ is continuous and
\[ \dfrac{\partial u}{\partial z}(x,\gamma(x,\tau),\tau) = \Psi(x,\tau) u(x,\gamma(x,\tau),\tau),  \]
then $\partial_z u$ is continuous up to the boundary. To get $C^2$ regularity near the boundary by a bootstrap argument, we would need $\Psi$, and hence $\Phi$, to be $C^1$. For smoother $\Phi$ and $\beta$, we can get more smoothness up to the boundary.
\end{remark}

We conclude this subsection with a local existence result in the original variables of \eqref{eq:PDE}-\eqref{eq:BC}.

\begin{theorem}
    Assume $\Phi$ is locally Lipschitz, $B\in C^0(\R_+)$ and $W\in L^1(\mathbb T^d)$. For all compatible initial conditions $ \rho^0$,
     \eqref{eq:PDE}-\eqref{eq:BC} admits a unique maximal local-in-time solution in the sense of Definition \ref{def:classical}.
\end{theorem}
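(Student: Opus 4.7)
The plan is to simply reverse the chain of changes of variables laid out in Section 2.1, starting from the maximal solution $u$ of the Stefan-like problem \eqref{eq:Stefan} provided by the previous theorem. Given $u\in C^{0}(\mathbb T^d\times\Omega_{z,\tau})\cap C^{0,2,1}(\mathbb T^d\times\mathring\Omega_{z,\tau})$ on $[0,\tau^*)$, I would first define
\[
q(x,y,\tau) = u\bigl(x,y+\gamma(x,\tau),\tau\bigr),
\]
which translates the free boundary back to $y=0$, and then set
\[
\rho(x,s,t) = e^{t}\,q\bigl(x,e^{t}s,\tfrac12(e^{2t}-1)\bigr),
\]
defined on $\mathbb T^d\times\R_+\times[0,T_{\mathrm{max}})$ with $T_{\mathrm{max}}=\tfrac12\log(2\tau^*+1)$ (or $+\infty$ if $\tau^*=+\infty$).

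The next step is to verify that $\rho$ is a classical solution in the sense of Definition \ref{def:classical}. Because $\gamma(x,\cdot)$ is $C^1$ by Lemma \ref{lm:gamma} and $u$ has the stated interior/boundary regularity, the chain rule yields $\rho\in C^{0,2,1}(\mathbb T^d\times\R_+^*\times(0,T_{\mathrm{max}}))\cap C^{0,1,1}(\mathbb T^d\times\R_+\times(0,T_{\mathrm{max}}))$; the $C^1$ regularity in $s$ up to $s=0$ follows from Remark \ref{rm:boundary_reg}. A direct calculation, undoing the computations leading from \eqref{eq:PDE} to \eqref{eq:Stefan}, shows that $\rho$ satisfies \eqref{eq:PDE} in the classical sense and that the Robin-type boundary condition \eqref{eq:BC} is exactly the image of the Stefan boundary condition $\partial_y u(x,\gamma(x,\tau),\tau) = \Psi(x,\tau)\,u(x,\gamma(x,\tau),\tau)$ under the changes of variables. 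The fast-decay properties of $\rho$ and $\partial_s\rho$ follow from those of $u$ and $\partial_z u$, using that the changes of variables are linear in the spatial variable with bounded factors on compact time intervals.

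The condition $\int_0^{+\infty}\rho(x,s,t)\,ds = L^{-d}$ should be checked by integrating \eqref{eq:PDE} in $s$ on $[0,+\infty)$: the boundary condition \eqref{eq:BC} and the fast decay at $s=+\infty$ make the right-hand side vanish, so $\int_0^{+\infty}\rho(x,s,t)\,ds$ is constant in time at every $x\in\mathbb T^d$, equal to its initial value $L^{-d}$ by the hypothesis on $\rho^0$. Non-negativity is inherited from $u^0\geqslant 0$ via the representation formula \eqref{eq:Duhamel}; more precisely, it follows because $\rho$ solves a linear parabolic equation with prescribed continuous coefficient $\Psi$, so one can invoke the maximum principle on the regular domain $\mathbb T^d\times\R_+\times[0,T]$ for any $T<T_{\mathrm{max}}$.

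For uniqueness, the plan is to run the argument backward: if $\rho_1,\rho_2$ are two classical solutions of \eqref{eq:PDE}-\eqref{eq:BC} in the sense of Definition \ref{def:classical} on some common interval, the regularity and decay assumptions make the self-similar and shift changes of variables well defined and produce two triplets $(v_i,\gamma_i,\bar u_i)$ satisfying the coupled Duhamel system \eqref{du}-\eqref{du_hast}-\eqref{du_hast_mich}. Proposition \ref{prop:local} then forces $(v_1,\gamma_1,\bar u_1)=(v_2,\gamma_2,\bar u_2)$, hence $u_1=u_2$ via \eqref{eq:Duhamel} and thus $\rho_1=\rho_2$. Maximality of $T_{\mathrm{max}}$ corresponds directly to maximality of $\tau^*$. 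The main technical point in this plan, rather than any one deep obstacle, is the careful bookkeeping of regularity and of the fast-decay propagation through the two composed changes of variables; the mass-conservation identity is the only place where the hypothesis \eqref{eq:homogeneousBrain} is genuinely used, and it is stable under the transformation because the Jacobian factors produced by the self-similar change cancel exactly in the integral.
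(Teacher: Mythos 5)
Your proposal is correct and follows essentially the same route as the paper, which states this theorem without proof precisely because it is the composition of the two changes of variables applied in reverse to the maximal solution of the Stefan-like problem; your write-up simply makes explicit the bookkeeping (regularity via the chain rule and Remark \ref{rm:boundary_reg}, mass conservation from \eqref{eq:BC} and fast decay, non-negativity from the maximum principle with $\Phi_{\bar\rho}$ treated as an external coefficient, uniqueness by mapping any classical solution back to the fixed point of Proposition \ref{prop:local}) that the paper leaves implicit.
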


\subsection{Global-in-time existence}

If we choose a function $\Phi$ which is globally Lipschitz and not just locally Lipschitz, then the only possible obstruction for global-in-time existence is blowup of either $v$ or $\bar u$. Note that we have the original variables correspondences
\begin{align} \rho(x,0,t) &= \dfrac{1}{\sqrt\sigma}e^{\frac{t}{\tau_c}} v\left(x,\frac12 (e^{\frac{2t}{\tau_c}}-1)\right),\label{eq:modif_s=0}\\
\bar \rho(x,t) &= \sqrt\sigma e^{-\frac{t}{\tau_c}} \left( \bar u\left(x,\frac12 (e^{\frac{2t}{\tau_c}}-1)\right) - \gamma\left(x,\frac12 (e^{\frac{2t}{\tau_c}}-1)\right) \right)\label{eq:modif_mean},  \end{align}
where $v,\gamma,\bar u$ are obtained with $\tau_c = \sigma = 1$ and the modified $\tilde W, \tilde \Phi$previously described.

We will also make good use of a weaker formulation of our problem. Consider a solution $\rho$ to \eqref{eq:PDE}; for any test function $h\in C^2(\R_+)$ with at most polynomial growth, we obtain by integration by parts and applying the boundary condition,
\begin{align}\label{eq:test_functions}
    \forall x\in \mathbb T^d, \quad \tau_c \dfrac{\partial}{\partial t} \int_0^{+\infty} h(s) \rho(x,s,t) ds =&  \int_0^{+\infty}\left[ (\Phi_{\bar \rho}(x,t) - s)\dfrac{d h}{ds}(s) + \sigma \dfrac{d^2 h}{ds^2}(s)\right] \rho(x,s,t) ds \nonumber\\ &+ \sigma \dfrac{d h}{ds}(0) \rho(x,0,t),
\end{align}
where
\[ \Phi_{\bar \rho}(x,t) = \Phi( W\ast \bar\rho (x,t) + B(t) ). \]

Let us now prove that if the value $\rho(x,0,t)$ at the boundary is controlled uniformly in time, then we also have control over the average $\bar \rho$.
\begin{lemma}\label{lm:v_to_bar_rho}
    Assume $\Phi$ is globally Lipschitz, $B\in C^0(\R_+)$ and $W\in L^1(\mathbb T^d)$. Let $\rho$ be a solution of \eqref{eq:PDE}-\eqref{eq:BC} in the sense of Definition \ref{def:classical}. Let $T,C_T>0$ such that for all $t\in[0,T]$, for all $x\in\mathbb T^d$, $ |\rho(x,0,t)| \leqslant C_T$. Then, there exists a constant $\bar C_T$ depending only on $T$, $C_T$, $\Phi$, $B$, $L^d$, $\sigma$, $W$ and  $\norme{\bar \rho^0}_\infty$ such that
    \[  \forall t\in[0,T],\ \forall x\in\mathbb T^d, \quad |\bar\rho(x,t)| \leqslant \bar C_T . \]
    If instead we assume that there exists one point $x_0\in\mathbb T^d$ such that for all $t\in[0,T]$, $ |\rho(x_0,0,t)| \leqslant C_T$ and $\Phi(x_0,t)\leqslant C_T$, then, there exists a constant $\bar C_T$ depending only on $T$, $C_T$, $L^d$, $\sigma$ and  $\norme{\bar \rho^0}_\infty$ such that
    \[  \forall t\in[0,T], \quad |\bar\rho(x_0,t)| \leqslant \bar C_T . \]
\end{lemma}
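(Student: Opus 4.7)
The plan is to derive an ODE in time for $\bar\rho(x,t)$ by using the weak formulation \eqref{eq:test_functions} with the test function $h(s)=s$, and then close the estimate using the assumed control on $\rho(x,0,t)$ together with the global Lipschitz property of $\Phi$.

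\textbf{Step 1: Derive an ODE for $\bar\rho$.} Choose $h(s)=s$, so that $h'\equiv 1$ and $h''\equiv 0$. The test functions identity \eqref{eq:test_functions} is applicable because $\rho$ and $\partial_s\rho$ are fast-decaying (Definition \ref{def:classical}), so that all integrations by parts and the boundary terms at $s=+\infty$ are justified. This yields, for every $x\in\mathbb T^d$ and $t\in[0,T]$,
\begin{equation*}
\tau_c\,\dfrac{\partial \bar\rho}{\partial t}(x,t) + \bar\rho(x,t) \;=\; \dfrac{1}{L^d}\,\Phi_{\bar\rho}(x,t) + \sigma\,\rho(x,0,t),
\end{equation*}
where we used $\int_0^{+\infty}\rho(x,s,t)\,ds = 1/L^d$ to simplify the term $\int\Phi_{\bar\rho}\rho\,ds$.

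\textbf{Step 2: Duhamel representation.} Multiplying by the integrating factor $e^{t/\tau_c}$ and integrating in time gives the explicit representation
\begin{equation*}
\bar\rho(x,t) = e^{-t/\tau_c}\bar\rho^0(x) + \dfrac{1}{\tau_c}\int_0^t e^{(r-t)/\tau_c}\left[\dfrac{\Phi_{\bar\rho}(x,r)}{L^d} + \sigma\,\rho(x,0,r)\right]dr.
\end{equation*}

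\textbf{Step 3: Global bound via Grönwall.} For the first claim, the global Lipschitz property of $\Phi$ gives $|\Phi_{\bar\rho}(x,t)|\leq \|\Phi'\|_\infty(\|W\|_{L^1}\|\bar\rho(\cdot,t)\|_\infty + \|B\|_{L^\infty(0,T)})+|\Phi(0)|$. Setting $M(t)=\sup_{x\in\mathbb T^d}\bar\rho(x,t)$ (non-negativity of $\rho$ ensures $\bar\rho\geq 0$) and taking the supremum in $x$ of the Duhamel formula yields an inequality of the form
\begin{equation*}
M(t) \leq \|\bar\rho^0\|_\infty + C_1 + C_2\int_0^t M(r)\,dr,\qquad t\in[0,T],
\end{equation*}
where $C_1$ depends only on $T$, $C_T$, $\sigma$, $L^d$, $\|B\|_{L^\infty(0,T)}$, $\Phi$, $\|W\|_{L^1}$, and $C_2 = \|\Phi'\|_\infty\|W\|_{L^1}/(\tau_c L^d)$. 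Grönwall's lemma then produces the desired uniform bound $\bar C_T$.

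\textbf{Step 4: Pointwise bound at $x_0$.} For the second claim, the assumption $|\Phi_{\bar\rho}(x_0,t)|\leq C_T$ at the single point $x_0$ already makes the right-hand side of the Duhamel formula entirely explicit and bounded, so plugging in immediately gives
\begin{equation*}
\bar\rho(x_0,t) \leq \|\bar\rho^0\|_\infty + \Big(\dfrac{C_T}{L^d} + \sigma C_T\Big)(1 - e^{-t/\tau_c}),
\end{equation*}
which is the required estimate.

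The main (and only) subtlety is justifying Step~1: one must verify that $h(s)=s$ is admissible in \eqref{eq:test_functions}, which is exactly guaranteed by the fast decay of $\rho$ and $\partial_s\rho$ built into Definition \ref{def:classical}. Once the ODE is in hand, the rest is a textbook Duhamel plus Grönwall argument.
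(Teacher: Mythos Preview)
Your proof is correct and follows the same essential strategy as the paper: both start from the ODE for $\bar\rho$ obtained by testing against $h(s)=s$ (this is exactly \eqref{eq:test_s} in the paper), and for the second claim both use the same variation-of-constants formula at the single point $x_0$.

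The only difference is in how the first claim is closed. You absorb the $-\bar\rho$ term via the integrating factor $e^{t/\tau_c}$ and then apply Gr\"onwall to $M(t)=\sup_x\bar\rho(x,t)$. The paper instead integrates the ODE directly (keeping the $-\bar\rho$ term on the right), chooses a short time $T_1$ so that the resulting implicit inequality can be solved for $\|\bar\rho\|_{L^\infty(\mathbb T^d\times[0,T_1])}$, and then iterates $N\simeq T/T_1$ times. Both are textbook ODE arguments; your Gr\"onwall route is a bit cleaner and yields the bound in one stroke, while the paper's iteration makes the dependence of $\bar C_T$ on the data slightly more explicit. Neither approach introduces any new idea beyond the common Step~1.
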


\begin{proof}
    Applying \eqref{eq:test_functions} with $h(s)=s$ and using the conservation of mass property, we obtain
    \begin{equation}\label{eq:test_s} \tau_c \dfrac{\partial \bar\rho}{\partial t}(x,t) =  - \bar\rho(x,t) + \dfrac{\Phi( W \ast \bar \rho(x,t) + B(t) )}{L^d} + \sigma\rho(x,0,t).    \end{equation}
    We denote again $C_\Phi$ the Lipschitz constant of $\Phi$. Let $T_1\in(0,T]$ such that 
    \[ T_1 < \dfrac{L^d}{L^d+C_\Phi \norme{W}_{L^1}}. \]
    For all $t\in[0,T_1]$, we integrate \eqref{eq:test_s} and use the available bounds to obtain
    \begin{align*} \norme{\bar \rho(\cdot,t)}_{L^\infty(\mathbb T^d)} \leqslant\,& \norme{\bar \rho^0}_{L^\infty(\mathbb T^d)} + \dfrac{1}{\tau_c} \int_0^t \norme{\bar \rho(\cdot,t')}_{L^\infty(\mathbb T^d)} dt' \\& + \dfrac{1}{\tau_c L^d} C_\Phi \int_0^t \left( \norme{W}_{L^1}\norme{\bar \rho(\cdot,t')}_{L^\infty(\mathbb T^d)} + \norme{B}_{L^\infty([0,T])}  \right)dt'  + \dfrac{1}{\tau_c}\sigma C_T T_1. \end{align*}
    Taking the supremum on $[0,T_1]$ on both sides and under the integrals, we obtain
    \begin{align*} \norme{\bar \rho}_{L^\infty(\mathbb T^d\times [0,T_1])} \leqslant\,& \norme{\bar \rho^0}_{L^\infty(\mathbb T^d)} + \frac{T_1}{\tau_c}  \norme{\bar \rho}_{L^\infty(\mathbb T^d\times [0,T_1])} \\& + \dfrac{1}{\tau_c L^d} C_\Phi T_1 \left( \norme{W}_{L^1} \norme{\bar \rho}_{L^\infty(\mathbb T^d\times [0,T_1])} + \norme{B}_{L^\infty([0,T])}  \right)  + \frac{\sigma C_T T_1}{\tau_c} . \end{align*}
    Hence, we have
    \[  \norme{\bar \rho}_{L^\infty(\mathbb T^d\times [0,T_1])} \leqslant \dfrac{  \norme{\bar \rho^0}_{L^\infty(\mathbb T^d)}  + \dfrac{ C_\Phi T_1}{\tau_c L^d}\norme{B}_{L^\infty([0,T])} +  \frac{\sigma C_T T_1}{\tau_c}   }{ 1 - \frac{T_1}{\tau_c}\left(1 + \frac{C_\Phi \norme{W}_{L^1}}{L^d} \right)   }.  \]
    Let $N$ the smallest integer such that $N T_1 \geqslant T$; we can deduce by iteration and majorations that
    \[  \norme{\bar \rho}_{L^\infty(\mathbb T^d\times [0,T])} \leqslant \dfrac{  \norme{\bar \rho^0}_{L^\infty(\mathbb T^d)}  + N\dfrac{C_\Phi T_1}{\tau_c L^d}\norme{B}_{L^\infty([0,T])} + \frac {N}{\tau_c} \sigma C_T T_1   }{ \left(1 - \frac{T_1}{\tau_c}\left(1 + \frac{C_\Phi \norme{W}_{L^1}}{L^d} \right) \right)^N  }  =: \bar C_T.    \]
    In the second case, we simply consider $\Phi( W \ast \bar \rho(x,t) + B(t) )$ as an external function and write for the specific point $x$,
    \[ \bar \rho (x,t) = \bar\rho(x,0)e^{-\frac{t}{\tau_c}} + \frac{1}{\tau_c}\int_{0}^t  e^{\frac{t'-t}{\tau_c}} \left( \dfrac{\Phi( W \ast \bar \rho(x,t') + B(t') )}{L^d} + \sigma\rho(x,0,t') \right)dt', \]
    which leads to
    \[\bar \rho (x,t) \leqslant \bar\rho(x,0)e^{-\frac{t}{\tau_c}} + C_T\dfrac{1+\sigma L^d}{L^d}(1- e^{-\frac{t}{\tau_c}}) \leqslant  \norme{\bar \rho^0}_{L^\infty(\mathbb T^d)} + C_T\dfrac{1+\sigma L^d}{L^d}. \]
\end{proof}
Then, we obtain the opposite control of the boundary value from bounds over the average.
\begin{lemma}\label{lm:bar_rho_to_v}
    Assume $\Phi$ is globally Lipschitz, $B\in C^0(\R_+)$ and $W\in L^1(\mathbb T^d)$. Let $\rho$ be a solution of \eqref{eq:PDE}-\eqref{eq:BC} in the sense of Definition \ref{def:classical}. Let $T,\bar C_T>0$ such that for all $t\in[0,T]$, for all $x\in\mathbb T^d$, $ |\bar\rho(x,t)| \leqslant \bar C_T $. Then, there exists a constant $C_T$ depending only on $T$, $\bar C_T$, $\Phi$, $B$, $\sigma$, $W$ and $\bar \rho^0$ such that
    \[  \forall t\in[0,T], \forall x\in\mathbb T^d, \quad |\rho(x,0,t)| \leqslant C_T. \]
\end{lemma}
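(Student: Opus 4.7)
The plan is to revert to the transformed variables of Section 2.1 and exploit the Duhamel representation \eqref{du} for $v(x,\tau)=u(x,\gamma(x,\tau),\tau)$, since \eqref{eq:modif_s=0} tells us that controlling $v$ on the interval $[0,T']$ with $T'=\tfrac12(e^{2T/\tau_c}-1)$ is equivalent to controlling $\rho(\cdot,0,\cdot)$ on $[0,T]$ up to constants depending only on $T$ and $\sigma$.

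First, I translate the hypothesis: by \eqref{eq:modif_mean}, the assumed bound $|\bar\rho(x,t)|\leqslant\bar C_T$ yields a uniform bound $|\bar u(x,\tau)-\gamma(x,\tau)|\leqslant M$ on $\mathbb T^d\times[0,T']$, where $M$ depends on $\bar C_T$, $\sigma$ and $T$. Since $\Phi$ is globally Lipschitz, $|\Phi(p)|\leqslant |\Phi(0)|+C_\Phi|p|$, and the identity $\Psi(x,\tau)=\Phi(\alpha(\tau) W\!*\![\bar u-\gamma](x,\tau)+\beta(\tau))\alpha(\tau)$ together with $\alpha(\tau)\leqslant 1$, $\|W\!*\![\bar u-\gamma]\|_\infty\leqslant \|W\|_{L^1} M$ and continuity of $B$ on $[0,T]$ gives a uniform bound $\|\Psi\|_{L^\infty(\mathbb T^d\times[0,T'])}\leqslant \widetilde C_\Psi$. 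From the ODE \eqref{eq:gamma_ODE} we then obtain $|\partial_\tau\gamma|\leqslant \widetilde C_\Psi$ and hence both $|\gamma(x,\tau)|\leqslant \widetilde C_\Psi T'$ and the Lipschitz bound $|\gamma(x,\tau)-\gamma(x,\eta)|\leqslant \widetilde C_\Psi |\tau-\eta|$ on the whole interval.

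Next, I insert these bounds into \eqref{du}. The first term is bounded by $2\|u^0\|_{L^\infty}$ (as in Step 2 of the proof of Proposition \ref{prop:local}). For the second term, the Lipschitz control on $\gamma$ combined with \eqref{eq:partial_G} yields, exactly as in \eqref{eq:int_partial_G},
\begin{equation*}
\left|\frac{\partial G}{\partial \xi}(\gamma(x,\tau),\tau,\gamma(x,\eta),\eta)\right|\leqslant \frac{\widetilde C_\Psi}{2\sqrt{4\pi(\tau-\eta)}}.
\end{equation*}
Therefore $v$ satisfies the singular integral inequality
\begin{equation*}
|v(x,\tau)|\leqslant 2\|u^0\|_{L^\infty}+\frac{\widetilde C_\Psi}{\sqrt\pi}\int_0^\tau \frac{|v(x,\eta)|}{\sqrt{\tau-\eta}}\,d\eta,\qquad \tau\in[0,T'].
\end{equation*}

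The main step is now to close this estimate. I apply the generalised (singular) Gronwall lemma: since $\int_\eta^\tau (\tau-s)^{-1/2}(s-\eta)^{-1/2}\,ds=\pi$ is bounded, iterating the inequality once removes the singularity of the kernel, and one concludes $|v(x,\tau)|\leqslant K_T$ on $\mathbb T^d\times[0,T']$ for a constant $K_T$ depending only on $\|u^0\|_{L^\infty}$, $\widetilde C_\Psi$ and $T'$. (Equivalently, one may invoke the Henry--Gronwall lemma for Volterra kernels with fractional singularities.) Undoing the self-similar change of variables via \eqref{eq:modif_s=0} produces the sought bound $|\rho(x,0,t)|\leqslant C_T=\sigma^{-1/2}e^{T/\tau_c}K_T$ on $\mathbb T^d\times[0,T]$, with $C_T$ depending only on the quantities listed in the statement. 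The only nontrivial ingredient is the singular Gronwall step; everything else is bookkeeping through the changes of variables and the estimates already assembled for Proposition \ref{prop:local}.
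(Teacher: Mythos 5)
Your proof is correct and follows essentially the same route as the paper's: both translate the hypothesis into a uniform bound on $\Psi$ (hence a Lipschitz bound on $\gamma$), feed this into the Duhamel formula \eqref{du} to obtain a Volterra inequality for $v$ with kernel $(\tau-\eta)^{-1/2}$, and then return to $\rho(x,0,t)$ via \eqref{eq:modif_s=0}. The only divergence is in closing that inequality: the paper absorbs the integral term on subintervals chosen so that $2C_2\sqrt{\tau_1}<1$ and iterates finitely many times, whereas you invoke the singular (Henry--Gronwall) lemma; both are valid and produce a constant with the stated dependencies.
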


\begin{proof}
    We proceed in modified variables by looking at the equation satisfied by $v$:
    \begin{align}\label{eq:v}
    v(x,\tau) = 2\int_{0}^{+\infty} G(\gamma(x,\tau),\tau, \xi ,0) u^0(x,\xi) d\xi +2\int_{0}^{\tau}\dfrac{\partial G}{\partial \xi}(\gamma(x,\tau),\tau,\gamma(x,\eta),\eta) v(x,\eta) d\eta,
    \end{align}
    where
    \[ \gamma(x,\tau) = -\int_0^\tau\Psi(x,\eta)d\eta,\quad \Psi(x,\tau) = \frac1{\sqrt\sigma}\Phi\left( \alpha(\tau) \frac1{\sqrt\sigma}W \ast [ \bar u (x,\tau) - \gamma(x,\tau)] + \beta(\tau)\right)\alpha(\tau).  \]
    By hypothesis and Lemma \ref{lm:gamma}, there exists a constant $C_1$ such that for all $\tau\in[0,\tau^*]$, for all $x\in\mathbb T^d$, $\Psi(x,\tau)\leqslant C_1$.
    Recalling the estimate \eqref{eq:int_partial_G}, there exists a constant $C_2$ such that
    \begin{equation*}
        \displaystyle\int_0^\tau \left| \dfrac{\partial G}{\partial \xi}(\gamma(x,\tau),\tau,\gamma(x,\eta),\eta)\right| d\eta \leqslant C_2 \sqrt{\tau}.
    \end{equation*}
    Consider $\tau^* = \frac12 (e^{\frac{2t}{\tau_c}}-1)$ and let $\tau_1 \leqslant \tau^*$ such that $ 2C_2\sqrt{\tau_1} < 1$. Then, for all $\tau\in[0,\tau_1]$, for all $x\in\mathbb T^d$, we have
    \[ v(x,\tau) \leqslant 2\norme{u^0}_{L^\infty(\mathbb T^d \times \R_+)} + 2C_2\sqrt{\tau_1} \norme{v}_{L^\infty(\mathbb T^d \times [0,\tau_1])}.     \]
    Taking the supremum on $x,\tau$ in the left-hand side of the previous inequality yields
    \[  \norme{v}_{L^\infty(\mathbb T^d \times [0,\tau_1])} \leqslant \dfrac{2\norme{u^0}_{L^\infty(\mathbb T^d \times \R_+)}}{1-2C_2\sqrt{\tau_1} }. \]
    Let $N$ be the smallest integer such that $N \tau_1 \geqslant \tau^*$; we deduce by iteration and majorations
    \[  \norme{v}_{L^\infty(\mathbb T^d \times [0,\tau^*])} \leqslant \dfrac{2\norme{u^0}_{L^\infty(\mathbb T^d \times \R_+)}}{\left(1-2C_2\sqrt{\tau_1}\right)^N } =: C_3. \]
    We eventually apply \eqref{eq:modif_s=0}:
    \[ \rho(x,0,t) = \dfrac{1}{\sqrt\sigma}e^{\frac{t}{\tau_c}} v\left(x,\frac12 (e^{\frac{2t}{\tau_c}}-1)\right) \leqslant \dfrac{1}{\sqrt\sigma}e^{\frac{T}{\tau_c}} C_3 := C_T. \]
\end{proof}

We can now state and prove the following global existence result.

\begin{theorem}\label{thm:global}
    Assume $\Phi$ is globally Lipschitz, $B\in C^0(\R_+)$, $W\in L^1(\mathbb T^d)$, and that $\rho^0$ is a compatible initial condition. Then,
    \eqref{eq:PDE}-\eqref{eq:BC} admits a unique global-in-time solution $\rho$ in the sense of Definition \ref{def:classical}.
\end{theorem}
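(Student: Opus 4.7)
The plan is to combine Lemmas \ref{lm:v_to_bar_rho}--\ref{lm:bar_rho_to_v} with an independent a priori bound on the second moment of $\rho$ in order to rule out the blow-up criterion \eqref{eq:time_max} of Proposition \ref{prop:local}. By the local-in-time existence theorem there is a unique maximal solution $\rho$ on $[0,T_{\mathrm{max}})$. Assume $T_{\mathrm{max}}<+\infty$. The correspondences \eqref{eq:modif_s=0}--\eqref{eq:modif_mean} translate \eqref{eq:time_max} into the statement that at least one of $\bar\rho(\cdot,t)$ or $\rho(\cdot,0,t)$ must become unbounded in $L^\infty(\mathbb T^d)$ as $t\to T_{\mathrm{max}}^-$. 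Lemmas \ref{lm:v_to_bar_rho} and \ref{lm:bar_rho_to_v} show that these two quantities are bounded on any finite interval together or not at all, so it suffices to derive an a priori bound on $\bar\rho$.

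For this I would exploit the second moment $M(x,t):=\int_0^{+\infty} s^2\,\rho(x,s,t)\,ds$, controlled via the weak formulation \eqref{eq:test_functions} with $h(s)=s^2$: since $h'(0)=0$, the boundary term drops and one obtains
\begin{equation*}
\tau_c\,\partial_t M(x,t) \,=\, 2\,\Phi(W\ast\bar\rho+B)(x,t)\,\bar\rho(x,t) \,-\, 2M(x,t) \,+\, \frac{2\sigma}{L^d}.
\end{equation*}
The mass condition \eqref{eq:homogeneousBrain} together with Cauchy--Schwarz gives $\bar\rho(x,t)\leqslant\sqrt{M(x,t)/L^d}$, while the global Lipschitz property of $\Phi$ yields $|\Phi(W\ast\bar\rho+B)|\leqslant |\Phi(0)|+C_\Phi(\|W\|_{L^1}\|\bar\rho(\cdot,t)\|_\infty+\|B\|_{L^\infty(0,T_{\mathrm{max}})})$. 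Setting $\mathcal M(t):=\sup_{x\in\mathbb T^d} M(x,t)$, writing the ODE in mild (Duhamel) form at each $x$ and then taking the spatial supremum produces a scalar integral inequality
\begin{equation*}
\mathcal M(t) \,\leqslant\, e^{-2t/\tau_c}\mathcal M(0) + \frac{2}{\tau_c}\int_0^t e^{-2(t-s)/\tau_c}\bigl(A\sqrt{\mathcal M(s)}+B\,\mathcal M(s)+C\bigr)\,ds,
\end{equation*}
with $A,B,C$ depending only on $\Phi,W,B,\sigma,L^d$. After absorbing $\sqrt{\mathcal M}$ by Young's inequality the right-hand side is linear in $\mathcal M$, so Gronwall gives a finite-valued bound $\mathcal M(t)\leqslant F(t)$ on $[0,T_{\mathrm{max}}]$, whence $\sup_{t<T_{\mathrm{max}}}\|\bar\rho(\cdot,t)\|_{L^\infty(\mathbb T^d)}<+\infty$.

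Applying Lemma \ref{lm:bar_rho_to_v} then produces a uniform bound on $\|\rho(\cdot,0,t)\|_{L^\infty(\mathbb T^d)}$ over $[0,T_{\mathrm{max}})$; through \eqref{eq:modif_s=0}--\eqref{eq:modif_mean} this yields bounds on $v$ and on $\bar u-\gamma$ over the corresponding $\tau$-interval. Global Lipschitzianity of $\Phi$ keeps $\Psi=\alpha\,\Phi(\alpha W\ast(\bar u-\gamma)+\beta)$ bounded, so $\gamma=-\int_0^\tau\Psi\,d\eta$ and $\bar u=(\bar u-\gamma)+\gamma$ are bounded as well, contradicting \eqref{eq:time_max}. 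The main technical obstacle is the rigorous handling of the spatial supremum $\mathcal M(t)$, which is only Lipschitz in $t$ in general; this I would avoid by working with the mild form of the ODE at each fixed $x$, applying Cauchy--Schwarz and the Lipschitz bound on $\Phi$ pointwise, and only then passing to $\sup_x$, so that the Gronwall inequality for $\mathcal M$ is obtained without ever differentiating $\mathcal M$ directly.
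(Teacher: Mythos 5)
Your proof is correct and shares the paper's overall architecture: assume $T_{\mathrm{max}}<+\infty$, derive an a priori bound on $\bar\rho$, transfer it to the boundary value via Lemma \ref{lm:bar_rho_to_v}, then bound $\Psi$, $\gamma$ and $\bar u$ to contradict the criterion \eqref{eq:time_max}. Where you genuinely diverge is in the key estimate on $\bar\rho$ (the paper's Step~2). The paper applies \eqref{eq:test_functions} to a hand-crafted test function $h$ that is increasing, equals $s$ for $s\geqslant 1$, and has $h(0)=h'(0)=0$ with \emph{bounded} first and second derivatives; the resulting inequality is linear in $\sup\bar\rho$ and is closed by choosing $T^*$ small and iterating over finitely many subintervals. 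You instead take $h(s)=s^2$, obtain the exact second-moment ODE, control $\bar\rho(x,t)\leqslant\sqrt{M(x,t)/L^d}$ by Cauchy--Schwarz with the mass constraint \eqref{eq:homogeneousBrain}, and close with a single Young--Gronwall argument in mild form — arguably cleaner, with no iteration over small time intervals and with the dissipative $-2M$ term doing useful work. The trade-off, which the paper itself flags in the remark following the non-positive-$\Phi$ lemma, is that your route needs the second moment to be finite and uniformly bounded in $x$ at $t=0$ and to stay so on $[0,T_{\mathrm{max}})$; this is available here because compatible initial data are fast-decaying (and the paper uses $\sup_x\int s^2\rho^0\,ds$ elsewhere, e.g.\ in Theorem \ref{thm:main}), but it is exactly what the paper's choice of a bounded-derivative $h$ avoids, which is why their version survives under mere linear decay. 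The only point worth tightening in your write-up is the uniformity in $x$: fast decay in Definitions \ref{def:compatible} and \ref{def:classical} is stated pointwise in $x$, so the finiteness of $\mathcal M(0)=\sup_x M(x,0)$ and the local boundedness of $t\mapsto\mathcal M(t)$ needed to run Gronwall deserve a sentence (the paper is equally terse on this, so it is a shared, minor gap rather than a flaw specific to your argument).
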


\begin{proof} Consider a local solution $\rho$ over the maximal time interval $[0,T^*)$, $T^*\in (0,+\infty]$. If we consider $\Phi_{\bar \rho}(x,t)$ as an external input, the maximum principle for the linear Fokker-Planck equation implies that $\rho$ is non-negative. As a consequence, $\bar \rho(x,t)$ and $\rho(x,0,t)$ must remain non-negative as well. In modified variables, due to the equivalence formulae above, $v(x,\tau)$ is also positive but not necessarily $\bar u(x,\tau)$. Let us denote by $C_\Phi$ the Lipschitz coefficient of $\Phi$.\\

Assume for the sake of contradiction that $T^*$ is finite. Note that we can always choose $T^*$ as small as we want without loss of generality by changing the initial time to $T^*-\varepsilon$ and taking the new initial condition $\rho(\cdot,T^*-\varepsilon)$, for some small $\varepsilon$; then this solution blows-up at time $\varepsilon$. We proceed with tge proof in the following steps.
\begin{enumerate}
    \item Assume the average $\bar\rho$ and the boundary value $\rho(\cdot,0,\cdot)$ are uniformly bounded on $[0,T^*)$ and conclude that the free boundary position is then uniformly bounded.
    \item Prove a uniform bound on $[0,T^*)$ for the average $\bar\rho$.
    \item Use Lemma \ref{lm:bar_rho_to_v} and step 1 to get bounds on all the quantities involved in condition \eqref{eq:time_max} for the maximal time of existence.
\end{enumerate}

    {\bf Step 1. Conditional bound for the free boundary.-} Assume $\bar \rho$ and $\rho(\cdot,0,\cdot)$ (respectively $\bar u$ and $v$) are uniformly bounded on $[0,T^*)$ (respectively $[0,\tau^*)$, $\tau^* = \frac12 (e^{2\frac{T^*}{\tau_c}}-1)$). For all $t\in [0, \tau^*)$,
    \begin{align*} |\gamma(x,\tau)| &= \left| \gamma^0(x) - \int_{0}^{\tau} \frac{1}{\sqrt\sigma}\Phi\left( \alpha(\eta) \frac{1}{\sqrt\sigma}W \ast [ \bar u (x,\eta) - \gamma(x,\eta)] + \beta(\eta)\right)\alpha(\eta) d\eta \right|\\
    & \leqslant \norme{\gamma^0}_\infty + \frac{ T^* C_\Phi}{\sqrt\sigma}  \left( \frac{\norme{W}_{L^1}}{\sqrt\sigma}\left( \norme{\bar u}_{L^\infty(\mathbb T^d\times [0,\tau^*))} + \norme{\gamma(\cdot,t)}_{L^\infty(\mathbb T^d)}\right) + \norme{\beta}_{L^\infty([0,\tau^*])}  \right).
    \end{align*}
    Rearranging, taking the supremum on the left-hand side and choosing $T^*$ small enough without loss of generality (as explained above), we obtain the uniform bound
    \[  \norme{\gamma}_{L^\infty(\mathbb T^d\times [0,\tau^*))} \leqslant \dfrac{ \sqrt\sigma\norme{\gamma^0}_{L^\infty(\mathbb T^d)} + T^* C_\Phi \left( \frac{\norme{W}_{L^1}}{\sqrt\sigma}\norme{\bar u}_{L^\infty(\mathbb T^d\times [0,\tau^*))} + \norme{\beta}_{L^\infty([0,\tau^*])}  \right) }{\sqrt\sigma - T^* C_\Phi \norme{W}_{L^1} }.  \]

    {\bf Step 2. Bound for the average $\bar\rho$.-}  Consider a test function $h\in C^2([0,+\infty))$ satisfying
    \begin{itemize}
        \item $h$ is increasing on $[0,+\infty)$;
        \item $\forall s\in[1,+\infty)$, $h(s)=s$;
        \item $h(0) = \dfrac{dh}{ds}(0)=0$.
    \end{itemize}
    Hence, there exist constants $C_h^1,C_h^2>0$ such that
    \[  \norme{ \dfrac{dh}{ds}(s) }_{L^\infty(\R_+)} \leqslant C_h^1,\qquad \mathrm{and} \qquad \norme{ \dfrac{d^2h}{ds^2}(s) }_{L^\infty(\R_+)} \leqslant C_h^2. \]
    Applying \eqref{eq:test_functions} to this function $h$, we obtain
    \begin{multline}
    \forall x\in \mathbb T^d, \quad  \int_0^{+\infty} h(s) \rho(x,s,t) ds =\int_0^{+\infty} h(s) \rho^0(x,s) ds \\  \frac{1}{\tau_c} \int_0^t\int_0^{+\infty}\left[ (\Phi_{\bar \rho}(x,t') - s)\dfrac{d h}{ds}(s) + \sigma \dfrac{d^2 h}{ds^2}(s)\right] \rho(x,s,t') dsdt',
    \end{multline}
    and applying the bounds on the derivative of $h$, it give us for all $x\in\mathbb T^d$,
    \begin{align*}
    & \int_0^{+\infty} h(s) \rho(x,s,t) ds \leqslant   \int_0^{+\infty} h(s) \rho^0(x,s) ds + \frac{1}{\tau_c} \int_0^t \left( \dfrac{C_h^1}{L^d}|\Phi_{\bar \rho}(x,t')|  + \dfrac{\sigma C_h^2}{L^d} \right)dt'\\
     \leqslant & \int_0^{+\infty} h(s) \rho^0(x,s) ds + \frac{1}{\tau_c} \int_0^t \left( \dfrac{C_h^1 C_\Phi}{L^d}(\norme{W}_{L^1} \norme{\bar\rho(\cdot,t)}_{L^\infty(\mathbb T^d)} + \norme{B}_{L^\infty([0,T^*))}  + \dfrac{\sigma C_h^2}{L^d}\right)dt'\\
     \leqslant & \int_0^{+\infty} h(s) \rho^0(x,s) ds + \dfrac{T^*}{\tau_c L^d}\left( C_h^1 C_\Phi\left(\norme{W}_{L^1} \norme{\bar\rho}_{L^\infty(\mathbb T^d\times[0,t))} + \norme{B}_{L^\infty([0,T^*))}\right)  + \sigma C_h^2\right).
    \end{align*}
    Now, we notice that
    \[ \bar\rho(x,t) =  \int_{0}^1 s\rho(x,s,t) ds + \int_{1}^{+\infty}h(s)\rho(x,s,t) ds \leqslant  \dfrac{1}{L^d} + \int_{0}^{+\infty}h(s)\rho(x,s,t) ds.   \]
    Therefore, we have
    \begin{align*} \bar\rho(x,t) \leqslant \,&\dfrac{1}{L^d} + \int_0^{+\infty} h(s) \rho^0(x,s) ds\\ &+ \dfrac{ T^*}{\tau_c L^d}\left( C_h^1 C_\Phi\left(\norme{W}_{L^1} \norme{\bar\rho}_{L^\infty(\mathbb T^d\times[0,t))} + \norme{B}_{L^\infty([0,T^*))}\right)  + \sigma C_h^2\right).   \end{align*}
    Assuming $T^*$ is small enough and taking the supremum on $\mathbb T^d\times [0,t]$ in the left-hand side, we eventually get
    \[  \norme{\bar\rho}_{L^\infty(\mathbb T^d\times[0,t))}  \leqslant \dfrac{\tau_c + \tau_c L^d\int_0^{+\infty} h(s) \rho^0(x,s) ds + \sigma C_h^2 T^* +  C_h^1 C_\Phi T^* \norme{B}_{L^\infty([0,T^*))}  }{\tau_c L^d- T^* C_h^1 C_\Phi \norme{W}_{L^1}}. \]
    Letting $t$ tend to $T^*$, we obtain a uniform bound for $\bar \rho$: $\forall (x,t)\in \mathbb T^d\times [0,T^*)$, $\bar \rho \leqslant C_{T^*}$. 
    
    {\bf Step 3. Reaching the contradiction.-} Since we have a bound for the average, we can apply Lemma \ref{lm:bar_rho_to_v} and obtain
    a uniform bound for the boundary term $\rho(x,0,t)$ on $\mathbb T^d\times[0,T^*)$. Step 1 then also provides a uniform bound for $\gamma$ on $\mathbb T^d\times[0,T^*)$. By condition \eqref{eq:time_max}, these three uniform bounds constitute a contradiction with the finiteness of $T^*$.
\end{proof}

\begin{remark}
    It can be checked by looking at \eqref{eq:Duhamel} in modified variables that this global-in-time solution propagates many properties of the initial condition for all times. Like previously noted in Remark \ref{rm:boundary_reg}, if the functions $\Phi$ and $B$ are $C^\infty$, then the global solution $\rho$ becomes smooth up to the boundary for any strictly positive time.
\end{remark}

\subsection{Uniform bounds when $\Phi$ is of constant sign}

When we know the sign of the function $\Phi$, we can establish more precise bounds on the two main quantities $\bar\rho(x,t)$ and $\rho(x,0,t)$. The case $\Phi\geqslant 0$ is quite natural in computational neuroscience.

\begin{lemma}\label{lm:uniform_bound_rho(0)}
    Assume $\Phi$ is globally Lipschitz, $B\in C^0(\R_+)$, $W\in L^1(\mathbb T^d)$, and $\rho^0$ is a compatible initial condition. If $\Phi$ is non-negative, then
    \begin{equation}\label{eq:uniform_bound_rho(0)}
    \forall t\in\R_+,\ \forall x\in\mathbb T^d, \quad \rho(x,0,t) \leqslant \dfrac{1}{L^d}\sqrt{ \dfrac{2}{ \pi \sigma } } \dfrac1{\sqrt{1-e^{-\frac{2t}{\tau_c}}}}. 
    \end{equation} 
    For all $T\in\R_+^*$, there exists $\bar C_T$ such that
    \[ \forall t\in[0,T],\ \forall x\in\mathbb T^d, \quad \bar\rho(x,t) \leqslant \bar C_T.  \]
\end{lemma}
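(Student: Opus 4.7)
The plan is to work with the transformed Duhamel formula \eqref{du} for $v(x,\tau)$ and exploit the sign of $\Phi$, which controls the sign and monotonicity of the free boundary $\gamma$. Specifically, in normalized variables ($\tau_c = \sigma = 1$), if $\Phi \geq 0$ then $\Psi(x,\tau) \geq 0$, so
\[
\gamma(x,\tau) = -\int_0^\tau \Psi(x,\eta)\,d\eta \leq 0,
\qquad
\gamma(x,\tau) - \gamma(x,\eta) = -\int_\eta^\tau \Psi(x,\zeta)\,d\zeta \leq 0
\text{ for } \eta \leq \tau.
\]
Since
\[
\dfrac{\partial G}{\partial \xi}(\gamma(x,\tau),\tau,\gamma(x,\eta),\eta)
= \dfrac{\gamma(x,\tau)-\gamma(x,\eta)}{2(\tau-\eta)} G(\gamma(x,\tau),\tau,\gamma(x,\eta),\eta),
\]
this expression is non-positive. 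Recalling that $v \geq 0$ (because $\rho \geq 0$ by the maximum principle argument already invoked in the proof of Theorem \ref{thm:global}), the Volterra term in \eqref{du} is non-positive, whence
\[
v(x,\tau) \leq 2 \int_0^{+\infty} G(\gamma(x,\tau),\tau,\xi,0)\, u^0(x,\xi)\,d\xi.
\]

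Next I will use the trivial bound $G(\gamma(x,\tau),\tau,\xi,0) \leq \frac{1}{\sqrt{4\pi\tau}}$ together with the mass identity $\int_0^{+\infty} u^0(x,\xi)\,d\xi = \frac{1}{L^d}$ (which follows from \eqref{eq:homogeneousBrain} and the change of variables at $t=0$) to obtain
\[
v(x,\tau) \leq \dfrac{1}{L^d\sqrt{\pi\tau}}.
\]
Undoing the change of variables via \eqref{eq:modif_s=0}, with $\tau = \tfrac12(e^{2t/\tau_c}-1)$ so that $\sqrt{\tau} = \tfrac{1}{\sqrt{2}}e^{t/\tau_c}\sqrt{1-e^{-2t/\tau_c}}$, and restoring $\sigma$, gives exactly \eqref{eq:uniform_bound_rho(0)}.

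For the second assertion, I cannot directly apply Lemma \ref{lm:v_to_bar_rho} since the bound on $\rho(x,0,t)$ diverges as $t\to 0^+$. Instead, I will integrate \eqref{eq:test_s} via Duhamel,
\[
\bar\rho(x,t) = e^{-t/\tau_c}\bar\rho^0(x) + \dfrac{1}{\tau_c}\int_0^t e^{-(t-t')/\tau_c}\left[\dfrac{\Phi(W\ast\bar\rho(\cdot,t')+B(t'))}{L^d} + \sigma \rho(x,0,t')\right]dt',
\]
and observe that the singularity of \eqref{eq:uniform_bound_rho(0)} at $t=0$ is of order $1/\sqrt{t'}$, hence integrable with
\[
\int_0^t \dfrac{dt'}{\sqrt{1-e^{-2t'/\tau_c}}} < +\infty \text{ for every } t < +\infty.
\]
Using the global Lipschitz property of $\Phi$ to bound the first term by $|\Phi(0)| + C_\Phi(\|W\|_{L^1}\|\bar\rho(\cdot,t')\|_{L^\infty({\mathbb T^d})} + \|B\|_{L^\infty([0,T])})$ and taking the supremum over $x\in{\mathbb T^d}$, a standard Grönwall argument on the nondecreasing quantity $t\mapsto \|\bar\rho\|_{L^\infty({\mathbb T^d}\times[0,t])}$ yields a finite bound $\bar C_T$ on $[0,T]$ for every $T>0$. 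The only subtle point is that the $\rho(x,0,t')$ contribution must be controlled through an explicit integrable majorant rather than a uniform-in-time bound, so one has to bookkeep the $\tau_c$-dependent primitive of $1/\sqrt{1-e^{-2t'/\tau_c}}$; this is routine and I would simply bound it by $C\sqrt{T}$ on $[0,T]$ using $1-e^{-2t'/\tau_c} \geq \min(1, 2t'/\tau_c)\cdot(1-e^{-2})$ for $t' \in [0,T]$ (with a suitable constant depending on $T$).
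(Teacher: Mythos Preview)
Your argument for the main inequality \eqref{eq:uniform_bound_rho(0)} is identical to the paper's: both exploit $\Psi\geq 0$ to show the Volterra term in \eqref{du} is non-positive, bound $v(x,\tau)\leq \frac{1}{L^d\sqrt{\pi\tau}}$ via the heat kernel sup and the mass constraint, and revert the change of variables.

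For the second assertion, the paper simply invokes Lemma~\ref{lm:v_to_bar_rho}. You correctly observe that the explicit bound \eqref{eq:uniform_bound_rho(0)} diverges as $t\to 0^+$, so that lemma cannot be applied using \eqref{eq:uniform_bound_rho(0)} alone. The paper's implicit justification is that, by the regularity in Definition~\ref{def:classical} together with global existence, $\rho(x,0,t)$ is continuous on the compact set $\mathbb T^d\times[0,T]$ and hence uniformly bounded there by some $C_T'$, which is what Lemma~\ref{lm:v_to_bar_rho} actually needs. Your alternative---integrating \eqref{eq:test_s} directly and using the $1/\sqrt{t'}$ integrability of the singularity in a Gr\"onwall argument---is a perfectly valid and slightly more self-contained route that avoids appealing to the abstract continuity of the solution near $t=0$. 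Both work; yours makes the dependence on the data more explicit, while the paper's is shorter.
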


\begin{proof} 
    Let us proceed in modified variables and prove a bound on $v(x,t)$. Note that the non-negativity of $\Phi$ implies non-negativity of $\Psi$. Then, building on the fact that
    \begin{align*} \dfrac{\partial G}{\partial \xi}(\gamma(x,\tau),\tau,\gamma(x,\eta),\eta) =  \dfrac{ \gamma(x,\tau) - \gamma(x,\eta) }{4\sqrt{\pi}(\tau-\eta)^\frac32} e^{-\frac{(\gamma(x,\tau)-\gamma(x,\eta))^2}{4(\tau-\eta)}} 
    = \dfrac{ - \int_\eta^\tau \Psi(x,\zeta)d\zeta }{4\sqrt{\pi}(\tau-\eta)^\frac32} e^{-\frac{(\gamma(x,\tau)-\gamma(x,\eta))^2}{4(\tau-\eta)}},
    \end{align*}
    we rewrite the expression \eqref{du} for $v$ in order to make signs salient:
    \begin{align}
    v(x,\tau) =\,\,& 2 \int_{0}^{+\infty} G(\gamma(x,\tau),\tau, \xi ,0) u^0(x,\xi) d\xi\nonumber\\& - 2\int_{0}^{\tau}  \dfrac{  \int_\eta^\tau \Psi(x,\zeta)d\zeta }{4\sqrt{\pi}(\tau-\eta)^\frac32} e^{-\frac{(\gamma(x,\tau)-\gamma(x,\eta))^2}{4(\tau-\eta)}}v(x,\eta) d\eta.
    \end{align}
    All the factors under the second integral being non-negative, and $u^0$ having unit mass, we claim
    \[  0 \leqslant v(x,\tau) \leqslant  2\int_{0}^{+\infty} G(\gamma(x,\tau),\tau, \xi ,0) u^0(x,\xi) d\xi \leqslant \dfrac{1}{ L^d \sqrt{\pi\tau}}.  \]
    In original variables (see formula \eqref{eq:modif_s=0}), this translates into the uniform bound
    \[ 0\leqslant \rho(x,0,t) \leqslant \dfrac{e^{\frac{t}{\tau_c}}}{ \sqrt\sigma L^d \sqrt{2\pi \left(e^{\frac{2t}{\tau_c}} - 1 \right)}} = \dfrac{1}{L^d}\sqrt{ \dfrac{2}{ \pi \sigma } } \dfrac1{\sqrt{1-e^{-\frac{2t}{\tau_c}}}}. \]
    By Lemma \ref{lm:v_to_bar_rho}, there is also a uniform bound over the average $\bar \rho(x,t)$.
    \end{proof}
    
    \begin{remark}
        Note that the bound over the boundary term is quantitative and universal, \textit{i.e.} independent of the initial condition $\rho^0$, which is made possible by the fact that the upper bound tends to $+\infty$ exponentially fast when time goes to $0$. However, the bound $\bar C_T$ over the average can tend to $+\infty$ when $T$ tends to $+\infty$, as can be seen through the special case $\Phi(z)=z$, $W(x)\equiv 2$, $\tau_c=\sigma=1$, $B(t)\equiv 0$, $L=1$, $\rho^0$ constant in $x$:
        \begin{equation*} \dfrac{\partial \bar\rho}{\partial t}(t) =  \bar \rho(t) + \rho(0,t),    \end{equation*}
        which admits the exponentially increasing subsolution $\bar\rho^0 e^t$.
    \end{remark}
    
    \begin{remark}[Optimality of the bound \eqref{eq:uniform_bound_rho(0)}]\label{rem:optimality}
        The bound is saturated in the high noise regime under the application-relevant assumptions of \cite{CRS}. Assume $\Phi$ is an increasing function with $\Phi(x) = 0$ for $x\leq 0$, the external input is constant and positive ($B(t) = B > 0$), and the connectivity kernel is average inhibitory:
        \[\int_{\mathbb T^d} W(x)dx = W_0 < 0.\]
        Then, by \cite[Lemma 3.1]{CRS}, if
        \[ \sigma > \dfrac{L^{2d }\pi B^2}{2 |W_0|^2},  \]
        there exists a unique $x$-independent stationary state $\rho_\infty$ of the form
        \[  \rho_\infty(s) = \dfrac{1}{L^d} \sqrt{\dfrac{2}{\pi \sigma}}\, e^{-\frac{s^2}{2\sigma}}.  \]
        We readily see that
        \[ \rho_\infty(0) = \dfrac{1}{L^d} \sqrt{\dfrac{2}{\pi \sigma}},\]
        which corresponds exactly to the bound \eqref{eq:uniform_bound_rho(0)} in the limit $t\to+\infty$.
    \end{remark}

    \begin{lemma}
        Assume $\Phi$ is globally Lipschitz, $B\in C^0(\R_+)$, $W\in L^1(\mathbb T^d)$ and $\rho^0$ is a compatible initial condition. If $\Phi$ is non-positive, then
        \[ \forall t\in \R_+,\ \forall x\in \mathbb T^d, \quad  \bar\rho(x,t) \leqslant  \dfrac{1}{L^d}(1+\sigma - \sigma e^{-\frac{2t}{\tau_c}}) +  e^{-\frac{2t}{\tau_c}} \sup_{x\in\mathbb T^d}\int_0^{+\infty} s^2\rho^0(x,s)ds . \]
        For all $T\in\R_+^*$, there exists $C_T$ such that
        \[ \forall t\in[0,T],\ \forall x\in\mathbb T^d, \quad \rho(x,0,t) \leqslant C_T.  \]
    \end{lemma}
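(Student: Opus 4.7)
The plan is to mirror the structure of the previous lemma but with a quadratic test function instead of the linear one, exploiting the sign of $\Phi$ to drop a non-negative term, and then to close the loop via Lemma \ref{lm:bar_rho_to_v}.

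More precisely, I would apply the weak formulation \eqref{eq:test_functions} with $h(s)=s^2$. Since $h'(0)=0$, the troublesome boundary contribution involving $\rho(x,0,t)$ vanishes, and writing $M_2(x,t):=\int_0^{+\infty} s^2\rho(x,s,t)\,ds$, a direct computation gives
\begin{equation*}
\tau_c\,\partial_t M_2(x,t) \;=\; 2\,\Phi_{\bar\rho}(x,t)\,\bar\rho(x,t) \;-\; 2\,M_2(x,t) \;+\; \frac{2\sigma}{L^d}.
\end{equation*}
Here is the key use of the sign assumption: since the solution is non-negative (so $\bar\rho\geqslant 0$) and $\Phi\leqslant 0$, the cross term $2\Phi_{\bar\rho}\bar\rho$ is non-positive, and we obtain the linear differential inequality
\begin{equation*}
\tau_c\,\partial_t M_2(x,t) \;\leqslant\; -2\,M_2(x,t) \;+\; \frac{2\sigma}{L^d}.
\end{equation*}
Gronwall (or integrating a linear first order ODE) then gives the pointwise bound
\begin{equation*}
M_2(x,t) \;\leqslant\; \frac{\sigma}{L^d}\bigl(1-e^{-\frac{2t}{\tau_c}}\bigr) \;+\; e^{-\frac{2t}{\tau_c}}\,M_2(x,0).
\end{equation*}

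To pass from $M_2$ to $\bar\rho$, I would split the first moment as
\begin{equation*}
\bar\rho(x,t) \;=\; \int_0^1 s\,\rho\,ds \;+\; \int_1^{+\infty} s\,\rho\,ds \;\leqslant\; \int_0^1 \rho\,ds \;+\; \int_1^{+\infty} s^2\,\rho\,ds \;\leqslant\; \frac{1}{L^d} \;+\; M_2(x,t),
\end{equation*}
where the homogeneity condition \eqref{eq:homogeneousBrain} controls the mass term. Combining with the previous bound and taking the supremum over $x \in \mathbb{T}^d$ of the initial datum yields exactly the announced estimate on $\bar\rho(x,t)$.

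For the local-in-time bound on $\rho(x,0,t)$, the uniform-in-time bound on $\bar\rho$ just obtained places us directly in the hypotheses of Lemma \ref{lm:bar_rho_to_v}: on any finite interval $[0,T]$ we have $\|\bar\rho\|_{L^\infty(\mathbb T^d\times[0,T])}\leqslant \bar C_T$, and the lemma produces the desired $C_T$ with $\rho(x,0,t)\leqslant C_T$ on $\mathbb T^d\times[0,T]$. The only mild technical point to double-check is that the test-function identity \eqref{eq:test_functions} is indeed justified for $h(s)=s^2$, which follows from the fast-decay of $\rho$ and $\partial_s\rho$ built into Definition \ref{def:classical}; no other ingredient beyond Lemma \ref{lm:bar_rho_to_v} and the sign of $\Phi$ is needed, and I do not anticipate a serious obstacle here.
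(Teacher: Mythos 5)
Your proposal is correct and follows essentially the same route as the paper: the weak formulation with $h(s)=s^2$, dropping the cross term $2\Phi_{\bar\rho}\bar\rho\leqslant 0$ by sign, Gr\"onwall for $M_2$, the split $\bar\rho\leqslant \frac{1}{L^d}+M_2$, and Lemma \ref{lm:bar_rho_to_v} for the boundary value. No gaps.
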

    
    \begin{proof}
    Let $x\in \mathbb T^d$. Applying \eqref{eq:test_functions} to $h(s)=s^2$ yields
    \[  \tau_c\dfrac{\partial}{\partial t} \int_{0}^{+\infty} s^2 \rho(x,s,t) ds = 2 \Phi_{\bar\rho}(x,t) \bar \rho(x,t) - 2 \int_{0}^{+\infty} s^2 \rho(x,s,t) ds  + \dfrac{2\sigma}{L^d}.      \]
    Denoting
    \[ M_2(x,t) =  \int_{0}^{+\infty} s^2 \rho(x,s,t) ds   \]
    and using non-negativity of $\bar \rho$ and non-positivity of $\Phi$, we come to
    \[  \dfrac{\partial }{\partial t} \left( M_2(x,t) - \dfrac{1}{L^d} \right) \leqslant
    -  \frac{2}{\tau_c} \left( M_2(x,t) - \dfrac{\sigma}{L^d} \right).
    \]
    Hence, by Grönwall's lemma applied to each point $x\in\mathbb T^d$ separately, we have
    \[ M_2(x,t) \leqslant \dfrac{\sigma}{L^d}(1-e^{-\frac{2t}{\tau_c}}) +  M_2(x,0)e^{-\frac{2t}{\tau_c}},  \]
    which translates into the uniform bound
    \[ \forall x\in \mathbb T^d, \forall t\in \R_+, \quad M_2(x,t) \leqslant \dfrac{\sigma}{L^d}(1-e^{-\frac{2t}{\tau_c}}) +  \norme{M_2(\cdot,0)}_{L^\infty(\mathbb T^d)} e^{-\frac{2t}{\tau_c}} .  \]
    We eventually notice that
    \begin{align*} \bar\rho(x,t) & = \int_{0}^1 s \rho(x,s,t) ds + \int_1^{+\infty} s \rho(x,s,t) ds\\
    &\leqslant \int_{0}^1 \rho(x,s,t) ds + \int_1^{+\infty} s^2 \rho(x,s,t) ds \\
    & \leqslant \int_{0}^{+\infty} \rho(x,s,t) ds + \int_0^{+\infty} s^2 \rho(x,s,t) ds \\
    & \leqslant \dfrac{1}{L^d} + \dfrac{\sigma}{L^d}(1-e^{-2 \frac{t}{\tau_c}}) +  \norme{M_2(\cdot,0)}_{L^\infty(\mathbb T^d)} e^{-\frac{2t}{\tau_c}} .
    \end{align*}
    Thanks to Lemma \ref{lm:bar_rho_to_v}, we can lift this result to uniform bounds over the boundary term $\rho(x,0,t)$ on all compact time intervals.\\
\end{proof}

\begin{remark}
    The uniform estimate on $\bar\rho$ is meaningful only when the second moment is finite for all $x\in\mathbb T^d$, which is enforced by the fast-decay in Definitions \ref{def:compatible} and \ref{def:classical}. For a solution which would not be fast-decaying, but only have linear decay, finite first moment, and linear derivative decay, a less precise uniform estimate can be obtained by applying \eqref{eq:test_functions} to the function $h$ we used in the proof of Theorem \ref{thm:global}.
\end{remark}

\section{Stability of stationary states}\label{sec:3}
We study the nonlinear stability of the stationary states of \eqref{eq:PDE}-\eqref{eq:BC} under the assumption of constant input $B$. 

By setting the left hand side of \eqref{eq:PDE} to zero and integrating with respect to $s$, and using the no-flux boundary condition \eqref{eq:BC}, the stationary states satisfy
\[   \sigma \partial_s \rho(x,s) =   - \left(s - \Phi\left( W \ast \bar \rho (x) + B \right) \right) \rho(x,s).  \]
Thus, the stationary states of \eqref{eq:PDE} must solve
\begin{equation}\label{eq:stationarystateequ}
\rho(x,s) = \dfrac{1}{Z_\rho} \exp\left( - \dfrac{ \big( s - \Phi\left(  W\ast \bar \rho (x) + B \big) \right)^2}{2\sigma} \right), 
\end{equation}
where $Z_\rho$ is the normalisation factor,
\begin{align*}
    Z_\rho = L^d \int_{0}^{+\infty} \exp\left( - \dfrac{ \big( s - \Phi\left(  W\ast \bar \rho (x) + B \big) \right)^2}{2\sigma} \right) \diff s. 
\end{align*}
A full characterisation of the stationary solutions of \eqref{eq:PDE}-\eqref{eq:BC} satisfying \eqref{eq:stationarystateequ} does not exist. It was shown in \cite[Prop. 3.1]{CHS} that when $W,B$, and $\Phi$ satisfy: $\Phi$ is an increasing function with $\Phi(x) = 0$ for $x\leq 0$, the external input is constant and positive ($B(t) = B > 0$), and the connectivity kernel is average inhibitory, that is
        \[\int_{\mathbb T^d} W(x)dx = W_0 < 0,\]
then \eqref{eq:PDE}-\eqref{eq:BC} admits a unique spatially homogeneous stationary state for any $\sigma > 0$.

Before embarking on the stability of stationary states of \eqref{eq:PDE}-\eqref{eq:BC}, as a warm-up we investigate in what sense the noiseless ($\sigma=0$) dynamics are relaxed as time increases.

\subsection{A Lyapunov functional for the noiseless case}

Assume that $\Phi \in C^1(\R)$ is a positive and strictly increasing function. Following \cite{KA} and others, we define the functional
\begin{equation}
    \mathcal E : \zeta \to \mathcal E[\zeta] = - \frac 1{2L^{d}}\int_{\mathbb T^d} \int_{\mathbb T^d} W(x-y) \zeta(y) \zeta(x) dx dy + \int_{\mathbb T^d} \int_{0}^{\zeta(x)} \left( \Phi^{-1}(\omega) - B \right) d\omega dx.
\end{equation}
on $C^0(\mathbb T^d)$. Given a solution $\bar \rho$ to the problem
\begin{equation}
    \dfrac{\partial \bar\rho}{\partial t}(x,t) = -\bar \rho(x,t) + \dfrac{\Phi( W \ast \bar\rho(x,t) + B)}{L^d},
\end{equation}
and denoting
\[ \Phi_{\bar\rho}(x,t) = \Phi( W\ast\bar\rho + B), \qquad \Phi_{\bar\rho}' (x,t) = \Phi'( W\ast\bar\rho + B),\]
we compute
\begin{align*}
   \dfrac{\partial \mathcal E[\Phi_{\bar\rho}]}{\partial t}(t)
    = &\, -\frac1{L^{d}}\int_{\mathbb T^d} \int_{\mathbb T^d} W(x-y) \Phi_{\bar\rho}(y) \dfrac{\partial \Phi_{\bar\rho}}{\partial t}(x,t) dx dy \\ &  + \int_{\mathbb T^d}  W\ast \bar\rho(x)\dfrac{\partial \Phi_{\bar\rho}}{\partial t}(x,t) dx\\
    = &\,  -\int_{\mathbb T^d} \dfrac{\partial \Phi_{\bar\rho}}{\partial t}(x,t) \left( -W\ast\bar\rho(x,t) + W\ast \dfrac{\Phi_{\bar\rho}(x,t)}{L^d} \right)dx \\
    = &\,  -\int_{\mathbb T^d} \dfrac{\partial \Phi_{\bar\rho}}{\partial t}(x,t) W \ast \dfrac{\partial \bar\rho}{\partial t}(x,t)dx \\
    = &\,  -\int_{\mathbb T^d}  \Phi'_{\bar\rho}(x,t) \left(  W \ast \dfrac{\partial \bar\rho}{\partial t}(x,t) \right)^2 dx \ \leq  \ 0 . 
\end{align*}
If $\Phi_{\bar\rho}$ is uniformly bounded in time, so is $ \mathcal E[\Phi_{\bar\rho}] $ and then, owing to the compactness of $\mathbb T^d$ the function $\Phi_{\bar\rho}$ converges in time towards a function $\Phi_0$ which is a local minimiser of $\mathcal E$. Hence, without noise, the (now first order) PDE \eqref{eq:PDE} supplemented with homogeneous Neumann boundary condition has a firing rate $\Phi(W\ast\bar \rho + B)$ that converges towards a constant-in-time $\Phi_0$, and hence this $\Phi_0$ must be such that $\Phi_0 = \Phi(W\ast\bar \rho_\infty + B)$ for some stationary state $\bar\rho_\infty$. In summary, when $\sigma = 0$, the nonlinear PDE \eqref{eq:PDE} completely reduces, through the firing rate $\Phi_{\bar\rho}(x,t)$, to the dynamics of a classical deterministic Wilson--Cowan model \cite{WC1,WC2}, which is already well understood. 

\subsection{Asymptotic stability of the stationary states for the system with noise}
For simplicity, we assume that $\tau_c=L=1$ in the following proofs. The results can be extended to any $\tau_c, L >0$ by an appropriate change of variables. Let $\rhostat$ be a stationary state of \eqref{eq:PDE}-\eqref{eq:BC} and define
 \[ \Phi_0(x) = \Phi( W\ast\bar\rho_\infty(x) + B), \qquad \Phi_0' (x) = \Phi'( W\ast\bar\rho_\infty (x) + B),\]
and $h=\frac{\rho}{\rhostat}$. Let $G(h)$ be a convex function of $h$. Then the relative entropy at position $x$ with respect to $G$, 
$$\int_0^{+\infty} G(h(x)) \rhostat(x) ds,$$ satisfies
\begin{align*}
\frac{d}{dt}\int_0^{+\infty} G(h) \rhostat ds & = \int_0^{+\infty} G'(h) \left( -\partial_s\left(\Phi_{\bar{\rho}}-s\right)\rho +\sigma\partial_{ss}\rho\right)   ds \\
& = - \int_0^{+\infty} G''(h) \partial_s h  \left( -\left(\Phi_{\bar{\rho}}-s\right)\rho +\sigma\partial_{s}\rho\right)  ds \\
& = - \int_0^{+\infty} G''(h) \partial_s h  \left( -\left(\Phi_{\bar{\rho}}-s\right)h +\frac{\sigma}{\rhostat}\partial_{s}\rho\right) \rhostat  ds \\
& = - \int_0^{+\infty} G''(h) \left( \left(\Phi_{0}-\Phi_{\bar{\rho}}\right) \frac{\partial h}{\partial s}  h +\sigma| \partial_s h |^2\right) \rhostat  ds,
\end{align*}
by an integration by parts and then using the relation $\frac{\partial_s \rho}{\rhostat} =  \partial_s h  + \frac1{\sigma} (\Phi_0-s) h$. 

Let us define the variance of the probability measure $L^d  \rho_\infty(x,\cdot)$ at each $x\in {\mathbb T^d}$ as   
\begin{align}\label{eq:minfty}
    M_\infty(x) = \int_0^{+\infty} (s-  L^d  \bar \rho_\infty(x))^2\rhostat(x,s) ds,
\end{align}
where $L$ is set to 1 in the proofs.
Letting $G(h) = \hf (h-1)^2$ and utilising the Poincar\'e inequality with respect to the measure $\rhostat$,
\begin{align}\label{eq:poincare}
\gamma(\rhostat) \int_0^{+\infty} |g - \bar g |^2\rhostat  ds \leq \int_0^{+\infty} |\partial_s g|^2\rhostat  ds,
\end{align}
where $\bar g = \int g(s) \rhostat(s) ds,$ see \cite{Mu72,RBI17}, we can prove the following proposition.

\begin{proposition}\label{prop:stab1}
Let $\Phi$ be Lipschitz, $W \in L^2(\mathbb T^d)$, and $\rhostat$ be a stationary state of \eqref{eq:PDE}-\eqref{eq:BC} given a $\sigma$. Assume that $\sigma$ satisfies 
\begin{align}\label{eq:poincareestimate}
\|\Phi' \|_\infty \frac{\|W\|_{L^2({\mathbb T^d})}}{L^\frac d2} \sup_{x \in {\mathbb T^d}} M_\infty^\hf (x) < \frac{\sigma}{2} \tilde\gamma (\rhostat )^\hf,
\end{align}
where $\tilde\gamma (\rhostat ) = \inf_{x \in \mathbb T^d} \gamma(\rhostat (x))$, and $\gamma(\rhostat(x))$ is the Poincar\'e constant of \eqref{eq:poincare} for $\rhostat (x)$, and $M_\infty$ is defined in \eqref{eq:minfty}. Then, the $L^1$ norm of the relative entropy 
\begin{align*}
 \int_{\mathbb T^d} \int_{0}^{+\infty} \left( \dfrac{ \rho(x,s,t) - \rho_\infty(x,s) }{ \rho_\infty(x,s) } \right)^2 \rho_\infty(x,s) ds dx,
\end{align*} 
decays exponentially fast whenever the compatible initial data $\rho_0$ is close enough to the stationary state $\rhostat$ in relative entropy.
\end{proposition}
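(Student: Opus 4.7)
The plan is to derive a Gr\"onwall-type inequality for the total quadratic relative entropy
\[
\mathcal R_{\mathrm{tot}}(t):=\tfrac12\int_{\mathbb T^d}\int_0^{+\infty}(h-1)^2\rho_\infty\,ds\,dx,\qquad h:=\rho/\rho_\infty,
\]
by combining the dissipation identity just derived with the Poincar\'e inequality \eqref{eq:poincare}. Specializing that identity to $G(h)=\tfrac12(h-1)^2$ (so $G''\equiv 1$) and integrating in $x$ yields
\[
\frac{d}{dt}\mathcal R_{\mathrm{tot}}=-\int_{\mathbb T^d}(\Phi_0-\Phi_{\bar\rho})\,A(x,t)\,dx-\sigma\int_{\mathbb T^d}D(x,t)\,dx,
\]
with $A:=\int_0^{+\infty}h\,\partial_s h\,\rho_\infty\,ds$ and $D:=\int_0^{+\infty}|\partial_s h|^2\rho_\infty\,ds$. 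The whole strategy is to absorb the cross term into a fraction of $\int D\,dx$ via Young's inequality, leaving a remainder controlled by $\mathcal R_{\mathrm{tot}}$ itself.

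Four ingredients feed this absorption. (i) Cauchy-Schwarz in $s$ gives $|A(x,t)|\le\sqrt{D(x,t)(1+2\mathcal R(x,t))}$, where $\int(h-1)\rho_\infty\,ds=0$ (from \eqref{eq:homogeneousBrain}) implies $\int h^2\rho_\infty\,ds=1+2\mathcal R$. (ii) Lipschitzness of $\Phi$ and the elementary bound $\|W\ast f\|_{L^\infty}\le\|W\|_{L^2}\|f\|_{L^2}$ (Cauchy-Schwarz under the convolution) give $\|\Phi_0-\Phi_{\bar\rho}\|_{L^\infty(\mathbb T^d)}\le\|\Phi'\|_\infty\|W\|_{L^2}\|\bar\rho-\bar\rho_\infty\|_{L^2}$. (iii) The identity $\bar\rho(x)-\bar\rho_\infty(x)=\int(s-\bar\rho_\infty(x))(h-1)\rho_\infty\,ds$ together with Cauchy-Schwarz in $s$ yields $|\bar\rho-\bar\rho_\infty|^2\le 2M_\infty(x)\mathcal R(x,t)$, whence $\|\bar\rho-\bar\rho_\infty\|_{L^2(\mathbb T^d)}^2\le 2\sup_xM_\infty\cdot\mathcal R_{\mathrm{tot}}$. (iv) The pointwise Poincar\'e inequality \eqref{eq:poincare} gives $\int_{\mathbb T^d}D\,dx\ge 2\tilde\gamma\,\mathcal R_{\mathrm{tot}}$. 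Applying Young's inequality $ab\le a^2/(2\sigma)+\sigma b^2/2$ with $a$ the prefactor and $b=\sqrt{\int D\,dx}$ absorbs $\tfrac{\sigma}{2}\int D\,dx$ and, using the normalization $L=1$, leads to
\[
\frac{d}{dt}\mathcal R_{\mathrm{tot}}\le -\Bigl[\sigma\tilde\gamma-\frac{\|\Phi'\|_\infty^2\|W\|_{L^2}^2\sup_x M_\infty}{\sigma}\bigl(1+2\mathcal R_{\mathrm{tot}}\bigr)\Bigr]\mathcal R_{\mathrm{tot}}.
\]

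Assumption \eqref{eq:poincareestimate} is equivalent to $\|\Phi'\|_\infty^2\|W\|_{L^2}^2\sup_xM_\infty/\sigma<\sigma\tilde\gamma/4$, so the bracket equals $3\sigma\tilde\gamma/4>0$ at $\mathcal R_{\mathrm{tot}}=0$ and stays above $\sigma\tilde\gamma/2$ while $\mathcal R_{\mathrm{tot}}\le 1/2$. A standard continuation/bootstrap then closes the proof: choosing the initial relative entropy below this explicit threshold, the inequality becomes $\tfrac{d}{dt}\mathcal R_{\mathrm{tot}}\le-\tfrac{\sigma\tilde\gamma}{2}\mathcal R_{\mathrm{tot}}$ on the maximal interval of bootstrap validity, which self-consistently propagates the smallness for all $t\ge 0$ and delivers the claimed exponential decay. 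The main obstacle is precisely the extra factor $1+2\mathcal R$ arising from $\int h^2\rho_\infty\,ds=1+2\mathcal R$: it prevents a purely linear Lyapunov argument and dictates that the theorem be local in initial data. A minor technicality to verify is $\tilde\gamma>0$, which follows from the explicit truncated-Gaussian form of $\rho_\infty(x,\cdot)$ combined with continuity of $x\mapsto\Phi_0(x)$ on the compact torus.
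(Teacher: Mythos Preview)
Your proof is correct and follows essentially the same strategy as the paper: compute the time derivative of the quadratic relative entropy, bound $|\Phi_0-\Phi_{\bar\rho}|$ via Lipschitz continuity of $\Phi$ combined with two Cauchy--Schwarz applications and the variance $M_\infty$, use Young's inequality to absorb half the Fisher dissipation, and close with the Poincar\'e inequality.

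The only noteworthy difference lies in the handling of the cross term $\int(\Phi_0-\Phi_{\bar\rho})\,h\,\partial_s h\,\rho_\infty\,ds$. The paper first splits $h=(h-1)+1$, applies Young to the $(h-1)\partial_s h$ piece and a direct Cauchy--Schwarz to the $\partial_s h$ piece, which produces the structure $\big(C-\tfrac{\sigma}{2}\tilde\gamma^{1/2}\big)(\text{dissipation})^{1/2}(\text{entropy})^{1/2}$ plus a quadratic remainder. You instead keep $h\partial_s h$ intact and use $\int h^2\rho_\infty=1+2\mathcal R$ inside Cauchy--Schwarz, which packages everything into the single factor $(1+2\mathcal R_{\mathrm{tot}})$. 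Both routes close under the same hypothesis \eqref{eq:poincareestimate}; your version is slightly more streamlined and, incidentally, yields a marginally sharper linear decay rate ($\sigma\tilde\gamma-C^2/\sigma$ versus the paper's $\sigma\tilde\gamma-2C\tilde\gamma^{1/2}$, where $C=\|\Phi'\|_\infty\|W\|_{L^2}\sup_xM_\infty^{1/2}$).
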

\begin{remark}
Notice that there exist stationary states of \eqref{eq:PDE}-\eqref{eq:BC} satisfying condition \eqref{eq:poincareestimate}. Indeed, the bound $\gamma(\rhostat (x)) \geq \frac{1}{\sigma}$ can be generically ensured due to sharp Poincaré constants, see for example \cite[Thm. 3]{RBI17} or the book \cite{BGL2014}. As a consequence, stationary states satisfying     
\begin{align}\label{eq:stabcond2}
\|\Phi' \|_\infty \frac{\|W\|_{L^2({\mathbb T^d})}}{L^\frac d2} \sup_{x \in {\mathbb T^d}} M_\infty^\hf (x) < \frac{\sigma^\hf}{2},
\end{align}
are by Proposition \ref{prop:stab1} exponentially stable. In \cite[Lem. 2.3-2.4]{CRS} it was shown that 
\begin{align*}
    \frac{M_\infty}{\sigma} = \dfrac{1}{L^d} \, g\left(\frac{\Phi_0}{\sqrt{2\sigma}} \right),  \qquad g(\eta):= 1-\frac{2}{\sqrt{\pi}}\frac{\exp (-\eta^2)}{1+\erf (\eta)}\left[\frac{1}{\sqrt{\pi}}\frac{\exp (-\eta^2)}{1+\erf (\eta)}+\eta\right], 
\end{align*}
where $g$ is an increasing function whose values lie in the interval $\big(1-\frac{2}{\pi}, 1\big)$. Thus, a sufficient condition for local asymptotic stability of all stationary states is 
\begin{equation}\label{eq:stabcond3}
   \frac1{L^d} \norme{\Phi'}_\infty \norme{W}_{L^2(\mathbb T^d)} <  \frac1{2}.
\end{equation}
It is an open problem to determine whether multiple stationary states can coexist under \eqref{eq:stabcond2} and if there is only one stationary state under \eqref{eq:stabcond3}. Note also that the length of the domain $L$ plays a special role: when the domain becomes smaller, the sufficient condition for stability requires that W becomes smaller in $L^2$ norm. Another way to see this is to take $W\equiv w$, where $w\in\R$ is a constant, then the condition is not on $w$ but on the rescaled quantity $\tfrac w{L^d}$.
\end{remark}

\begin{remark}
     Assume $\Phi$ is an increasing function with $\Phi(x) = 0$ for $x\leq 0$, the external input is constant and positive ($B(t) = B > 0$), and the connectivity kernel is average inhibitory, that is
        \[\int_{\mathbb T^d} W(x)\,dx = W_0 < 0.\] 
    Then, we can deduce that in the high-noise case $\sigma > \tfrac{\pi B^2}{2 |W_0|^2}$,
        the unique space-homogeneous stationary state
        \[  \rho_\infty(s) = \dfrac{1}{L^d} \sqrt{\dfrac{2}{\pi \sigma}}\, e^{-\frac{s^2}{2\sigma}},  \]
        is locally asymptotically stable under under the sufficient condition
        \begin{equation*}
    \norme{\Phi'}_\infty \norme{W}_{L^2(\mathbb T^d)} \leqslant \frac {L^d} {2 \sqrt{ \left(1-\frac2\pi\right)}}.
\end{equation*}
\end{remark}

\begin{proof}[Proof of Proposition \ref{prop:stab1}]
Let $G(h) = \hf (h-1)^2$. Then 
\begin{align}
\frac{d}{dt}\int_0^{+\infty} & \frac{1}{2}(h-1)^2 \rhostat ds \nonumber \\
= & - \int_0^{+\infty} \left(\Phi_{0}-\Phi_{\bar{\rho}}\right)  \frac{\partial h}{\partial s}  h \rhostat  ds -\sigma \int_0^{+\infty} | \partial_s h |^2\rhostat  ds \nonumber \\
= & - \int_0^{+\infty} \left(\Phi_{0}-\Phi_{\bar{\rho}}\right)  \partial_s h  (h-1) \rhostat  ds -  \left(\Phi_{0}-\Phi_{\bar{\rho}}\right)\int_0^{+\infty}  \partial_s h  \rhostat  ds -\sigma \int_0^{+\infty} | \partial_s h |^2\rhostat  ds \label{eq:h-timeevo}\\
 \leq & \, \frac{1}{2\sigma}\left|\Phi_{0}-\Phi_{\bar{\rho}}\right|^2 \int_0^{+\infty} (h-1)^2 \rhostat  ds -\frac{\sigma}{2} \int_0^{+\infty} | \partial_s h |^2\rhostat  ds -  \left(\Phi_{0}-\Phi_{\bar{\rho}}\right)\int_0^{+\infty}  \partial_s h  \rhostat  ds, \nonumber
\end{align}
by applying the inequality $|ab| \leq \hf (a^2 + b^2)$ with $a = \sqrt{\frac{\rhostat}{\sigma}}(h-1)(\Phi_{0}-\Phi_{\bar{\rho}})$ and $b=\sqrt{\sigma\rhostat}  \partial_s h $. We bound the difference $\Phi_{0}-\Phi_{\bar{\rho}}$ as follows:
\begin{align*}
    \left|\Phi_{0}-\Phi_{\bar{\rho}}\right| & \leq \|\Phi' \|_\infty \left| \int_{\mathbb T^d} W(x-y) \int_0^{+\infty} s(\rhostat -\rho)(s,y) ds dy \right| \\
    & = \|\Phi' \|_\infty \left| \int_{\mathbb T^d} W(x-y) \int_0^{+\infty} s(1 -h)(s,y) \rhostat ds dy \right| \\
    & = \|\Phi' \|_\infty \left| \int_{\mathbb T^d} W(x-y) \int_0^{+\infty} (s-\bar\rho_\infty)(1 -h)(s,y) \rhostat ds dy \right| \\
    & \leq \|\Phi' \|_\infty \int_{\mathbb T^d} |W(x-y)|  M_\infty^\hf\left(\int_0^{+\infty} (h-1)^2 \rhostat ds \right)^\hf dy \\
    & \leq \|\Phi' \|_\infty \left(\int_{\mathbb T^d} |W(x-y)|^2 dy \right)^\hf \left(\int_{\mathbb T^d} \left[  M_\infty \int_0^{+\infty} (h-1)^2 \rhostat ds \right]dy \right)^\hf \\
    & = \|\Phi' \|_\infty \|W\|_{L^2({\mathbb T^d})} \left(\int_{\mathbb T^d} \left[  M_\infty \int_0^{+\infty} (h-1)^2 \rhostat ds \right]dy\right)^\hf \\
    & \leq  \|\Phi' \|_\infty \|W\|_{L^2({\mathbb T^d})} \sup_{x \in {\mathbb T^d}} M_\infty^\hf(x) \left(\int_{\mathbb T^d} \int_0^{+\infty} (h-1)^2 \rhostat ds dy \right)^\hf \\
    & = C(\Phi,W, \rhostat) \left(\int_{\mathbb T^d} \int_0^{+\infty} (h-1)^2 \rhostat ds dy \right)^\hf,
\end{align*}
by applying the Cauchy--Schwarz inequality twice, first in $s$, then in $y$. Summing up and by applying the Poincar\'e inequality \eqref{eq:poincare}, we arrive at 
\begin{align*}
\frac{d}{dt}\int_{\mathbb T^d}&\int_0^{+\infty} \frac{1}{2}(h-1)^2 \rhostat dsdy \\
\leq &\, \frac{1}{2\sigma} C^2 \left(\int_{\mathbb T^d} \int_0^{+\infty} (h-1)^2 \rhostat ds dy \right)^2 \\
& + C \left|\int_{\mathbb T^d}\int_0^{+\infty}  \partial_s h  \rhostat  dsdy \right|\left(\int_{\mathbb T^d} \int_0^{+\infty} (h-1)^2 \rhostat ds dy \right)^\hf -\frac{\sigma}{2} \int_{\mathbb T^d}\int_0^{+\infty} | \partial_s h |^2\rhostat  ds dy \\
\leq &\,\frac{1}{2\sigma} C^2 \left(\int_{\mathbb T^d} \int_0^{+\infty} (h-1)^2 \rhostat ds dy \right)^2 \\
& + \left(C-\frac{\sigma}{2} \tilde\gamma (\rhostat)^\hf\right) \left(\int_{\mathbb T^d}\int_0^{+\infty} | \partial_s h |^2\rhostat  dsdx\right)^\hf \left(\int_{\mathbb T^d} \int_0^{+\infty} (h-1)^2 \rhostat ds dy \right)^\hf.
\end{align*}
 By assumption \eqref{eq:poincareestimate}, $\tilde C:=C-\frac{\sigma}{2} \tilde\gamma (\rhostat)^\hf < 0$. Thus, we can apply the Poincar\'e inequality \eqref{eq:poincare} again, which results in
\begin{align*}
\frac{d}{dt}\int_{\mathbb T^d} & \int_0^{+\infty} \frac{1}{2}(h-1)^2 \rhostat dsdy \\
& \leq \left[\frac{C^2}{2\sigma}  \int_{\mathbb T^d} \int_0^{+\infty} (h-1)^2 \rhostat ds dy + \tilde\gamma (\rhostat)^\hf\tilde C \right] \int_{\mathbb T^d} \int_0^{+\infty} (h-1)^2 \rhostat ds dy.
\end{align*}
The expression between the brackets will be negative as long as we choose $\int_{\mathbb T^d} \int_0^{+\infty} (h-1)^2 \rhostat ds dy$ small enough at $t=0$ and $\sigma$ satisfies \eqref{eq:minfty}. Thus, we get exponential decay in time of the relative entropy under condition \eqref{eq:poincareestimate}.
\end{proof}

\begin{remark}
    The exponential decay rate obtained in Proposition \ref{prop:stab1} depends on the initial condition: we can write
    \begin{align*}
 \int_{\mathbb T^d} \int_{0}^{+\infty} \left( \dfrac{ \rho - \rho_\infty }{ \rho_\infty } \right)^2 \rho_\infty ds dx \leqslant e^{- 2 K t } \int_{\mathbb T^d} \int_{0}^{+\infty} \left( \dfrac{ \rho^0 - \rho_\infty }{ \rho_\infty } \right)^2 \rho_\infty ds dx.
\end{align*} 
with
\begin{align*} K =&\  \tilde\gamma (\rhostat)^\hf\left( \frac{\sigma}{2} \tilde\gamma (\rhostat)^\hf - \frac1{L^d}\|\Phi' \|_\infty \|W\|_{L^2({\mathbb T^d})} \sup_{x \in {\mathbb T^d}} M_\infty^\hf(x) \right)\\
&\ -\frac{\|\Phi' \|_\infty^2 \|W\|_{L^2({\mathbb T^d})}^2 \sup_{x \in {\mathbb T^d}} M_\infty(x)}{2L^d\sigma} \int_{\mathbb T^d} \int_{0}^{+\infty} \left( \dfrac{ \rho^0 - \rho_\infty }{ \rho_\infty } \right)^2 \rho_\infty ds dx.
\end{align*}
However, for all $\varepsilon\in\R_+^*$, there exists $T_\varepsilon$, depending on $\rho^0$, such that for all $t\in(T_\varepsilon,+\infty)$,
\begin{align*}
 &\int_{\mathbb T^d} \int_{0}^{+\infty} \left( \dfrac{ \rho(x,s,t) - \rho_\infty(x,s) }{ \rho_\infty(x,s) } \right)^2 \rho_\infty(x,s) ds dx \\ \leqslant&\, e^{- 2 K_\varepsilon t } \int_{\mathbb T^d} \int_{0}^{+\infty} \left( \dfrac{ \rho(x,s,T_\varepsilon) - \rho_\infty(x,s) }{ \rho_\infty(x,s) } \right)^2 \rho_\infty(x,s) ds dx,
\end{align*} 
where $K_\varepsilon =\tilde\gamma (\rhostat)^\hf\left( \frac{\sigma}{2} \tilde\gamma (\rhostat)^\hf - \frac1{L^\frac d2}\|\Phi' \|_\infty \|W\|_{L^2({\mathbb T^d})} \sup_{x \in {\mathbb T^d}} M_\infty^\hf(x) \right) - \varepsilon$.
\end{remark}

\begin{corollary}
    Grant the assumptions of Proposition \ref{prop:stab1}. Assume further that either
    \begin{itemize}
        \item $\Phi\in L^{\infty}(\R_+),$ and
    \[  -K:=\dfrac{2\norme{\Phi}_\infty^2}{\sigma}  +   \tilde\gamma (\rhostat )^\hf \left(\|\Phi' \|_\infty \frac{\|W\|_{L^2({\mathbb T^d})}}{L^\frac d2} \sup_{x \in {\mathbb T^d}} M_\infty^\hf (x)  - \frac{\sigma}{2} \tilde\gamma (\rhostat )^\hf\right) < 0, \quad \mathrm{or}   \]

        \item $W\leqslant 0$, $B\geqslant 0$, $\Phi$ is non-negative and increasing, and
        \[  -K := \dfrac{\Phi(B)^2}{\sigma}  +   \tilde\gamma (\rhostat )^\hf \left(\|\Phi' \|_\infty \frac{\|W\|_{L^2({\mathbb T^d})}}{L^\frac d2} \sup_{x \in {\mathbb T^d}} M_\infty^\hf (x)  - \frac{\sigma}{2} \tilde\gamma (\rhostat )^\hf\right) < 0.   \]
    \end{itemize}    
Then, for all compatible initial data $\rho_0$ such that
\[   \int_{\mathbb T^d} \int_{0}^{+\infty} \left( \dfrac{ \rho^0(x,s) - \rho_\infty(x,s) }{ \rho_\infty(x,s) } \right)^2 \rho_\infty(x,s) ds dx <  +\infty,      \]
the $L^1$ norm of the relative entropy satisfies
\begin{align*}
 \int_{\mathbb T^d} \int_{0}^{+\infty} \left( \dfrac{ \rho - \rho_\infty }{ \rho_\infty } \right)^2 \rho_\infty ds dx \leqslant e^{- 2 K t } \int_{\mathbb T^d} \int_{0}^{+\infty} \left( \dfrac{ \rho^0 - \rho_\infty }{ \rho_\infty } \right)^2 \rho_\infty ds dx.
\end{align*} 
In particular, the stationary state $\rho_\infty$ is the unique stationary state of the equation.
\end{corollary}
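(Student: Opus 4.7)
The strategy is to refine the proof of Proposition~\ref{prop:stab1}, removing the smallness assumption on the initial relative entropy. The obstruction in that earlier proof is the quartic contribution $\tfrac{C^2}{2\sigma}E^2$ (with $E:=\int_{\mathbb T^d}\!\int_0^{+\infty}(h-1)^2\rhostat\,ds\,dx$) produced by squaring the Lipschitz bound $|\Phi_0-\Phi_{\bar\rho}|\leq C\,E^{1/2}$ inside the term involving $(\Phi_0-\Phi_{\bar\rho})\partial_s h\,(h-1)$. The key new ingredient is to replace this Lipschitz bound by a uniform pointwise bound on $|\Phi_0-\Phi_{\bar\rho}|$, available under either added hypothesis: under (i), the triangle inequality gives $|\Phi_0-\Phi_{\bar\rho}|\leq 2\|\Phi\|_\infty$; under (ii), $W\leq 0$ together with $\bar\rho,\bar\rho_\infty\geq 0$ and the monotonicity of $\Phi\geq 0$ yield $0\leq \Phi_0,\Phi_{\bar\rho}\leq \Phi(B)$, hence $|\Phi_0-\Phi_{\bar\rho}|\leq \Phi(B)$.

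Starting from the identity \eqref{eq:h-timeevo} integrated over $\mathbb T^d$ and applying Young's inequality with this uniform bound, I would obtain
\[
\Big|\int_{\mathbb T^d}\!\int_0^{+\infty}(\Phi_0-\Phi_{\bar\rho})\partial_s h\,(h-1)\rhostat\,ds\,dx\Big| \leq \tfrac{\|\Phi_0-\Phi_{\bar\rho}\|_\infty^2}{2\sigma}\,E + \tfrac{\sigma}{2}\,D,
\]
with $D:=\int_{\mathbb T^d}\!\int_0^{+\infty}|\partial_s h|^2\rhostat\,ds\,dx$: this coefficient is now linear in $E$ with a universal constant. The cross term $\int_{\mathbb T^d}(\Phi_0-\Phi_{\bar\rho})\!\int_0^{+\infty}\partial_s h\,\rhostat\,ds\,dx$ I would treat exactly as in Proposition~\ref{prop:stab1}, which gives a bound of the form $C\,E^{1/2}D^{1/2}$ with $C=\|\Phi'\|_\infty\|W\|_{L^2}/L^{d/2}\sup_{\mathbb T^d}M_\infty^{1/2}$. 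The Poincaré inequality \eqref{eq:poincare} is then used twice: first as $E^{1/2}\leq D^{1/2}/\tilde\gamma(\rhostat)^{1/2}$ to convert the cross term into a multiple of $D$, and second as $D\geq \tilde\gamma(\rhostat)\,E$ to absorb the remaining signed $D$ contribution back into $E$. Collecting coefficients yields $\dot E \leq -2K\,E$ with $K$ as in the statement, and Grönwall's lemma concludes the exponential decay.

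For the uniqueness claim, any other stationary state $\rho_\infty'$ solving \eqref{eq:stationarystateequ} is a truncated Gaussian in $s$, hence its ratio with $\rho_\infty$ is bounded above and below by exponentials in $s$ and the relative entropy with respect to $\rho_\infty$ is finite. Taking $\rho_\infty'$ as initial condition produces the time-independent solution $\rho(t)\equiv \rho_\infty'$, whose relative entropy is simultaneously constant in time and, by the estimate above, exponentially decaying; this forces the initial relative entropy to be zero, so $\rho_\infty'=\rho_\infty$.

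The main obstacle is purely bookkeeping: tuning the weights in Young's inequality for the first term and the order in which Poincaré is applied to the cross term so that the signs and coefficients combine into precisely the constant $K$ of the statement. In particular, whether $CE^{1/2}D^{1/2}$ is handled by one Poincaré substitution or by a Young split between $E$ and $D$ changes the final numerical constant, and only the former choice reproduces the expression written in the corollary.
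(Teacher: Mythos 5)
Your proposal is correct and follows essentially the same route as the paper: replace the squared Lipschitz bound on $|\Phi_0-\Phi_{\bar\rho}|$ in the Young-inequality term by the uniform pointwise bound ($2\|\Phi\|_\infty$ or $\Phi(B)$), keep the treatment of the cross term and the double application of the Poincar\'e inequality from Proposition \ref{prop:stab1} unchanged, and conclude uniqueness by noting that any other stationary state has finite relative entropy and is therefore forced to coincide with $\rho_\infty$ by the decay estimate. The only (inconsequential) difference is that you spell out the uniqueness argument in slightly more detail than the paper does.
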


\begin{proof}
Let $M$ be either $\norme{\Phi}_\infty$ or $\Phi(B)$. We notice that in both cases, for all $(x,t)\in\mathbb T^d\times \R_+$, $\Phi_{\bar\rho} (x,t) \leqslant M$.
Then, coming back to \eqref{eq:h-timeevo}, we now use the bound 
\[\left|\Phi_{0}-\Phi_{\bar{\rho}}\right| \leqslant 2M\ \ (\mathrm{first \ case}) \qquad \mathrm{or} \qquad \left|\Phi_{0}-\Phi_{\bar{\rho}}\right| \leqslant M\ \ (\mathrm{second \ case})\]
 in the first term of the last inequality, while leaving the remainder of the proof of Proposition \ref{prop:stab1} unchanged. It yields, for example in the first case, the differential inequality
\begin{align*}
\frac{d}{dt}\int_{\mathbb T^d}  \int_0^{+\infty} \frac{1}{2}(h-1)^2 \rhostat dsdy
& \leq \left[\frac{2M^2}{\sigma} + \tilde\gamma (\rhostat)^\hf\tilde C \right] \int_{\mathbb T^d} \int_0^{+\infty} (h-1)^2 \rhostat ds dy.\\
& \leq - 2 K \int_{\mathbb T^d} \int_0^{+\infty} \frac12 (h-1)^2 \rhostat ds dy. 
\end{align*}
Since all stationary states have finite relative entropy, any other stationary state $\rho^1_\infty$ converges towards $\rho_\infty$ in relative entropy, and hence the uniqueness of the stationary state.    
\end{proof}

\begin{remark}
    The second case of the corrollary applies to the setting of \cite{CRS} with the additional assumption of a negative (fully inhibitory) connectivity kernel $W\leqslant 0$. With the same arguments as in the previous remarks, the sufficient condition can be simplified to the less sharp, but clearer,
    \[  \sigma >\frac{4\Phi(B)^4}{\left(\frac1{2} -\frac{1}{L^d} \norme{\Phi'}_\infty \norme{W}_{L^2(\mathbb T^d)}\right)^2} > 0\qquad \mathrm{and} \qquad  \frac{1}{L^d}\norme{\Phi'}_\infty \norme{W}_{L^2(\mathbb T^d)} <  \frac12, \]
    which partially confirms the conjecture that in the high-noise case there is at most one stationary state which is homogeneous in time and globally asymptotically stable.
\end{remark}

Although the above result applies to some stationary states, it is difficult to determine the full range of $\sigma$ for which condition \eqref{eq:poincareestimate} actually holds. Therefore, building on the linear stability estimate in \cite{CHS}, we establish below that the spatially homogeneous stationary states are nonlinearly asymptotically stable under similar conditions to the linearised case. In \cite{CHS} it was shown that spatially homogeneous stationary states of \eqref{eq:PDE}-\eqref{eq:BC} are linearly exponentially stable if all the unnormalised Fourier modes of $W$, $\hat{W}_k$, defined as
\begin{align*}
\hat{W}_k = \int_{{\mathbb{T}^d}} W(\bx)\exp(2\pi k\cdot\bx) d\bx, \qquad k \in \Z^d,
\end{align*}
satisfy the condition 
\begin{equation}\label{eqn:W_tilde_s_0}
        \Phi_0' \hat W_k < \frac{\sigma}{M_\infty}.
\end{equation}  
This was done by carefully combining estimates for the time derivative of the relative entropy of each Fourier mode $(\hat \rho - \hat \rho_\infty)_k(s,t)$ of the perturbation $(\rho-\rhostat)(x,s,t)$,  
\begin{align*}
\int_0^{+\infty}  \frac{1}{2}\left(\frac{(\hat \rho - \hat \rho_\infty)_k}{\rhostat}\right)^2 \rhostat ds
\end{align*}
and the square of the Fourier modes of the perturbation of the mean, $ (\hat \brho - \hat \brho_\infty)_k$, $((\hat \brho - \hat \brho_\infty)_k)^2$, yielding exponential decay of the quantities
\begin{align}\label{eq:fouriermodes}
\int_0^{+\infty}  \left(\frac{(\hat \rho - \hat \rho_\infty)_k}{\rhostat}\right)^2 \rhostat ds - \frac{\Phi_0'}{\sigma} \hat W_k ((\hat \brho - \hat \brho_\infty)_k)^2, \qquad k \in \mathbb Z^d,
\end{align}
which where shown to be positive under condition \eqref{eqn:W_tilde_s_0}. This was sufficient to prove that perturbations away from the spatially homogeneous state are linearly exponentially stable in relative entropy. Condition \eqref{eqn:W_tilde_s_0} was later found to be optimal in the sense that the condition leading to bifurcations along the branch of spatially homogeneous solutions of \eqref{eq:PDE}-\eqref{eq:BC} is exactly \eqref{eqn:W_tilde_s_0} with the inequality replaced by an equality \cite{CRS}.

Motivated by the approach of the proof in \cite{CHS}, we carefully obtain estimates for a combination of the time derivatives of the quantities
\begin{align*}
\mathcal{E} =  \int_0^{+\infty}  \frac{1}{2}\left(\frac{\rho- \rhostat}{ \rhostat}\right)^2 \rhostat ds, \quad \mathrm{and} \quad \mathcal{H}=    \Phi^\delta \bar \rho^\delta,
\end{align*}
where $ \Phi^\delta = \Phi_{\bar \rho}-\Phi_0$, $ \bar \rho^\delta = \bar \rho - \bar \rho_\infty$. Linearizing $\Phi^\delta$, we get
\begin{align*}
2\mathcal{E}-\frac{\mathcal{H}}{\sigma} \simeq  \int_0^{+\infty} \left(\frac{\rho- \rhostat}{ \rhostat}\right)^2 \rhostat ds - \frac{\Phi'_0}{\sigma} W\ast(\bar \rho - \bar \rho_\infty) (\bar \rho - \bar \rho_\infty).
\end{align*}
Integrating the previous expression in $x\in{\mathbb{T}^d}$, we obtain
\begin{align*}
\int_{\mathbb T^d} 2\mathcal{E}-\frac{\mathcal{H}}{\sigma} dx \simeq  \int_{\mathbb T^d}\int_0^{+\infty} \left(\frac{\rho- \rhostat}{ \rhostat}\right)^2 \rhostat ds dx - \int_{\mathbb T^d} \frac{\Phi'_0}{\sigma} W\ast(\bar \rho - \bar \rho_\infty) (\bar \rho - \bar \rho_\infty) dx.
\end{align*}
Notice that assuming $\rhostat$ is spatially homogeneous and using the Parseval--Plancherel identity, we obtain the sum of the Fourier coefficients in \eqref{eq:fouriermodes}.

Now, utilising the similarity between \eqref{eq:fouriermodes} and $2\mathcal{E}-\frac{\mathcal{H}}{\sigma}$, we show exponential decay of the latter in $L^1(\mathbb T ^d)$ under conditions closely related to the linear stability condition \eqref{eqn:W_tilde_s_0}, see Remark \ref{rem:goodRemark} for an ellaboration on the relation between the conditions. This will in turn give exponential decay of the relative entropy \eqref{eq:relentropy}. To complete the proof, we will, in addition to the equality \eqref{eq:h-timeevo} and the Poincar\'e inequality \eqref{eq:poincare}, also need the time evolution of $\bar \rho^\delta=\brho -\brho_\infty$:
\begin{align}\label{eq:barrhoevo}
    \frac{\partial}{\partial t}(\brho-\brho_\infty) = \left(\Phi_{\bar{\rho}}-\Phi_{0}\right) - \sigma \int_0^{+\infty}  \partial_s h  \rhostat ds.
\end{align} 
Equation \eqref{eq:barrhoevo} is derived from \eqref{eq:test_s} and noting that
\begin{align*}
 \brho - \brho_\infty & =   \int_0^{+\infty} s (\rho -\rhostat) ds = \int_0^{+\infty} s (h-1)\rhostat ds = \sigma \int_0^{+\infty} \frac{s-\Phi_0}{\sigma} (h-1)\rhostat ds \\
 & = - \sigma \int_0^{+\infty} (h-1) \partial_s\rhostat ds = \sigma(\rho-\rhostat)(0) + \sigma \int_0^{+\infty}  \partial_s h  \rhostat ds.
\end{align*}
We will also use the notation
\begin{align*}
    \|f\|_{L^2_{\rhostat(x)}} = \left(\int_0^{+\infty} f(x,s)^2\rhostat(x,s) ds \right)^\hf\!\!, \qquad \|f\|_{L^2_{\rhostat}} = \left(\int_{\mathbb T^d}\int_0^{+\infty} f(x,s)^2\rhostat(x,s) ds dx \right)^\hf\!\!.
\end{align*}

\begin{proposition}\label{prop:stab}
    Let $\Phi \in C^2(\mathbb R)\cap \mathcal W^{2,\infty}(\R)$, $W \in L^2(\mathbb T^d)$, and $\rhostat$ be a spatially homogeneous stationary state of \eqref{eq:PDE}-\eqref{eq:BC} given a $\sigma$. Assume that $W$ is componentwise symmetric, and that for a suitably small $1>\alpha >0$, $\sigma$ satisfies
    \begin{align}\label{eq:nonlinstab-condition}
    \int_{\mathbb T^d} (1-\alpha)g^2(x) - \frac{\Phi^\delta_g(x)}{\sigma} g(x) \, dx > 0, \quad g \in L^2(\mathbb T^d),
    \end{align}
    where $$\Phi^\delta_g(x) =\Phi\big( M_\infty W\ast g (x) +W_0 \brho_\infty +B\big)-\Phi_0.$$ Then, the $L^1$ norm of the relative entropy
\begin{align}\label{eq:relentropy}
 \int_{\mathbb T^d} \int_{0}^{+\infty} \left( \dfrac{ \rho(x,s,t) - \rho_\infty(s) }{ \rho_\infty(s) } \right)^2 \rho_\infty(s) ds dx,
\end{align} 
 decays exponentially fast whenever the compatible initial data $\rho_0$ is close enough to the stationary state $\rhostat$ in relative entropy.   
\end{proposition}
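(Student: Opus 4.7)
The plan is to use the functional $\mathcal{F}(t) := \int_{\mathbb T^d} \bigl(2\mathcal{E}(x,t) - \mathcal{H}(x,t)/\sigma\bigr)\,dx$ from the preceding discussion as a Lyapunov functional, and to show (i) coercivity of $\mathcal{F}$ with respect to the relative entropy \eqref{eq:relentropy} on a small neighbourhood of $\rhostat$, and (ii) a differential inequality of the form $\tfrac{d}{dt}\mathcal{F} \leq -c\,\mathcal{F}\bigl(1-C\sqrt{\mathcal{F}}\bigr)$, from which exponential decay follows by a bootstrap argument once $\mathcal{F}(0)$ is small enough. For (i), the key observation is that $\Phi^\delta = \Phi^\delta_{\bar\rho^\delta/M_\infty}$ by the very definition of $\Phi^\delta_g$ and the spatial homogeneity of $\rhostat$; applying \eqref{eq:nonlinstab-condition} with $g=\bar\rho^\delta/M_\infty$ and combining with the Cauchy--Schwarz bound $|\bar\rho^\delta(x)|^2 \leq M_\infty \|h(x,\cdot)-1\|^2_{L^2_{\rhostat(x)}}$ (a consequence of the mass-conservation identity $\int_0^{+\infty}(h-1)\rhostat\,ds=0$, which allows re-centring by $\bar\rho_\infty$ before Cauchy--Schwarz) yields $\int_{\mathbb T^d}\Phi^\delta\bar\rho^\delta/\sigma\,dx \leq (1-\alpha)\|h-1\|_{L^2_{\rhostat}}^2$, whence $\mathcal{F}(t) \geq \alpha\|h-1\|_{L^2_{\rhostat}}^2$.

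For (ii), I would first differentiate the $\mathcal{E}$-part via \eqref{eq:h-timeevo} with $G(h)=\hf(h-1)^2$. An integration by parts in the drift term, combined with the stationary identity $\sigma\partial_s\rhostat=-(s-\Phi_0)\rhostat$, produces a boundary contribution in $\hf(h(x,0,t)-1)^2\rhostat(0)$ and a higher-moment piece in $\int(s-\Phi_0)(h-1)^2\rhostat\,ds$; the remaining cross term $-(\Phi_0-\Phi_{\bar\rho})\int\partial_s h\,\rhostat\,ds$ is rewritten using \eqref{eq:barrhoevo} as $|\Phi^\delta|^2/\sigma - \Phi^\delta\partial_t\bar\rho^\delta/\sigma$. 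Next, I would differentiate the $\mathcal{H}$-part by the chain rule, using $\partial_t\Phi^\delta = \Phi'_{\bar\rho}\,W\ast\partial_t\bar\rho^\delta$ and invoking \eqref{eq:barrhoevo} a second time. After integrating over $\mathbb T^d$, the componentwise symmetry of $W$ makes the convolution self-adjoint on $L^2(\mathbb T^d)$, which is precisely what allows the cross terms involving $\partial_t\bar\rho^\delta$ arising in $\tfrac{d}{dt}\mathcal{E}$ and $\tfrac{d}{dt}\mathcal{H}/\sigma$ to combine into a manifestly quadratic expression in $(\bar\rho^\delta,\Phi^\delta)$.

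Collecting the terms and invoking the Poincar\'e inequality \eqref{eq:poincare} to bound the diffusion, the resulting estimate reads schematically $\tfrac{d}{dt}\mathcal{F}(t) \leq -2\sigma\tilde\gamma(\rhostat)\|h-1\|_{L^2_{\rhostat}}^2 + \frac{2}{M_\infty}\int_{\mathbb T^d}\bigl[(\bar\rho^\delta)^2 - M_\infty\Phi^\delta\bar\rho^\delta/\sigma\bigr]\,dx + R(t)$, in which the middle term is non-positive (indeed bounded above by $-\tfrac{2\alpha}{M_\infty}\int_{\mathbb T^d}(\bar\rho^\delta)^2\,dx$) thanks to \eqref{eq:nonlinstab-condition}, and $R(t)$ gathers the cubic remainders produced by the boundary term $(h(\cdot,0,t)-1)^2\rhostat(0)$, the higher-moment contribution, and the Taylor expansion of $\Phi^\delta$ at order two. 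Using $\|\Phi''\|_\infty$, $\|W\|_{L^2(\mathbb T^d)}$, H\"older's inequality and the uniform a priori bounds on $\bar\rho$ and $\rho(\cdot,0,\cdot)$ from Section \ref{sec:2}, $R(t)$ is controlled by $C\|h-1\|_{L^2_{\rhostat}}^3$. Combined with (i), this gives the announced differential inequality and a standard bootstrap yields exponential decay of $\mathcal{F}$, hence of \eqref{eq:relentropy}, for all initial data close enough to $\rhostat$ in relative entropy.

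The main obstacle is the algebra of step (ii): keeping precise track of which terms cancel between $\tfrac{d}{dt}\mathcal{E}$ and $\tfrac{d}{dt}\mathcal{H}/\sigma$, and which reassemble into the quadratic form controlled by \eqref{eq:nonlinstab-condition}. The componentwise symmetry of $W$, the identity \eqref{eq:barrhoevo} (derived above from \eqref{eq:test_s} and the stationary equation), and the $C^2\cap\W^{2,\infty}$ regularity of $\Phi$ all intervene simultaneously at this point; the argument is essentially the nonlinear counterpart of the Fourier-mode energy estimate of \cite{CHS}, since by Parseval the quantity $\int_{\mathbb T^d}(2\mathcal{E}-\mathcal{H}/\sigma)\,dx$ reproduces to linear order the sum over $k\in\mathbb Z^d$ of the modewise functionals in \eqref{eq:fouriermodes}.
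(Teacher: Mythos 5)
Your choice of Lyapunov functional, your coercivity step (i), and the final bootstrap all match the paper's strategy: the paper's Step 2 proves exactly your bound $\mathcal Q\geq\alpha\|h-1\|_{L^2_{\rhostat}}^2$ via the decomposition $h-1=V+A(s-\brho_\infty)$ with $A=\brho^\delta/M_\infty$, which is equivalent to your Cauchy--Schwarz argument. The genuine gap is in step (ii): the schematic differential inequality you announce is not what the computation yields. After inserting \eqref{eq:h-timeevo} and \eqref{eq:barrhoevo}, the derivative of $\int_{\mathbb T^d}(2\mathcal E-\tfrac1\sigma\mathcal H)\,dx$ contains the cross term $\int_{\mathbb T^d}\Phi^\delta\int_0^{+\infty}\partial_s h\,\rhostat\,ds\,dx$ (with coefficient $3$, becoming $4$ once the convolution term is symmetrised). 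This term is \emph{quadratic} in the perturbation, of the same order as the dissipation; it is not of the form $(\brho^\delta)^2-M_\infty\Phi^\delta\brho^\delta/\sigma$ and cannot go into a remainder $R(t)=O(\|h-1\|^3)$. Absorbing it by Cauchy--Schwarz against $\|\partial_s h\|_{L^2_{\rhostat}}^2$ reintroduces a smallness condition on $\|\Phi'\|_\infty\|W\|_{L^2}/\sigma$ of the type of Proposition \ref{prop:stab1}, defeating the purpose of this proposition. The crux of the paper's proof, absent from your outline, is to complete the square so that the dissipation is $-\tfrac2\sigma\|\Phi^\delta-\sigma\partial_s h\|_{L^2_{\rhostat}}^2$, and then to apply the Poincar\'e inequality to the non-obvious test function $g=s\Phi^\delta-\sigma(h-1)$: its variance reproduces exactly $2\mathcal E-\tfrac1\sigma\mathcal H+\tfrac{M_\infty}{\sigma^2}(\Phi^\delta)^2-\tfrac1\sigma\Phi^\delta\brho^\delta=\int_0^{+\infty}V^2\rhostat\,ds+M_\infty(A-\Phi^\delta/\sigma)^2$, which is then compared to $\mathcal Q$ via the pointwise case analysis \eqref{eq:APhisquared}.

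A second problem is your extra integration by parts of the drift term, which produces $\Phi^\delta\bigl(\tfrac12(h(x,0,t)-1)^2\rhostat(0)+\tfrac1{2\sigma}\int_0^{+\infty}(s-\Phi_0)(h-1)^2\rhostat\,ds\bigr)$. Neither the pointwise boundary value nor the $s$-weighted moment is controlled by $\|h-1\|_{L^2_{\rhostat(x)}}$, so the claim $R(t)\leq C\|h-1\|_{L^2_{\rhostat}}^3$ is unjustified; the a priori bounds of Section \ref{sec:2} only give $O(1)$ control of $\rho(x,0,t)$, not control quadratic in the perturbation. The paper never performs this integration by parts: it bounds the cubic term $III$ directly by Cauchy--Schwarz in $s$, keeping the factor $\|\Phi^\delta-\sigma\partial_s h\|_{L^2_{\rhostat(x)}}$ intact so that it can be factored out of the entire right-hand side. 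Finally, as written your middle term $\tfrac2{M_\infty}\int_{\mathbb T^d}[(\brho^\delta)^2-M_\infty\Phi^\delta\brho^\delta/\sigma]\,dx$ is bounded \emph{below} by $\tfrac{2\alpha}{M_\infty}\int_{\mathbb T^d}(\brho^\delta)^2\,dx$ under \eqref{eq:nonlinstab-condition}, i.e.\ it is non-negative rather than non-positive; the sign must be reversed for the schematic to be self-consistent.
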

\begin{remark}\label{rem:goodRemark}
Note that 
\begin{align}\label{eq:nonlinstab2}
        \int_{\mathbb T^d} (1-\alpha) g^2 - \frac{\Phi^\delta_g}{\sigma} g \, dx & \geq  (1-\alpha) \left(\int_{\mathbb T^d}g dx\right)^2 - \int_{\mathbb T^d}\frac{\Phi^\delta_g}{\sigma} g \, dx \nonumber \\
        &=
        \int_{\mathbb T^d}\int_{\mathbb T^d} g(x)\left(1-\alpha-\frac{M_\infty}{\sigma}\tilde \Phi_g'(x) W(x-y) \right) g(y) dydx, 
\end{align}
with $\tilde \Phi_g'(x) =\frac{\Phi^\delta_g(x)}{W\ast g (x)}.$ Requiring that the right hand side of the equality in \eqref{eq:nonlinstab2} is positive is quite similar to the linear stability condition of \cite{CHS}, see \eqref{eqn:W_tilde_s_0}. In fact, assuming $\Phi$ is linear, then $ \Phi^\delta_g = M_\infty W \ast g$, and the right hand side of the equality of \eqref{eq:nonlinstab2} is 
\begin{align*}
    \int_{\mathbb T^d}\int_{\mathbb T^d} g(x)\left(1-\alpha-\frac{M_\infty}{\sigma} \Phi' W(x-y) \right) g(y) dydx.
\end{align*}
Thus, in the linear $\Phi$ case requiring that the right hand side of \eqref{eq:nonlinstab2} is positive, which is also known as requiring that $c(x) = 1-\alpha-\frac{M_\infty}{\sigma}\Phi' W(x)$ is H-stable, see \cite{CGPS20}, is identical to requiring that $c(x)$ only has positive unnormalised Fourier modes. This is again identical to the linear condition \eqref{eqn:W_tilde_s_0} if we replace the numerator on the right hand side of \eqref{eqn:W_tilde_s_0} with $\sigma(1-\alpha)$. Lastly, if $\Phi$ is linear and increasing, and $-W$ itself is H-stable, then any spatially homogeneous stationary state is locally asymptotically stable in relative entropy. This is in accordance with the finding of \cite{CRS}: there are no noise-driven bifurcations from the curve of homogeneous stationary states when $-W$ is H-stable.  
\end{remark}

\begin{proof}[Proof of Proposition \ref{prop:stab}] 
Using \eqref{eq:h-timeevo} and \eqref{eq:barrhoevo},
\begin{align*}
  \frac{d}{dt} \int_{\mathbb T^d} 2\mathcal{E}- \frac{1}{\sigma}\mathcal{H} \, dx =\,&  2 \int_{\mathbb T^d} \Phi^\delta \int_0^{+\infty}  \partial_s h  (h-1) \rhostat  ds dx - 2\sigma \int_{\mathbb T^d} \int | \partial_s h |^2 \rhostat ds dx \\
  & + 3 \int_{\mathbb T^d} \Phi^\delta \int_0^{+\infty}  \partial_s h   \rhostat  ds dx - \frac{1}{\sigma} \int_{\mathbb T^d} (\Phi^\delta)^2 dx \\
  &- \frac{1}{\sigma}\int_{\mathbb T^d} \brho^\delta \, \Phi'_\brho W \ast \left(\Phi^\delta - \sigma \int_0^{+\infty}  \partial_s h   \rhostat  ds \right) dx\\
  =& -\frac{2}{\sigma}\int_{\mathbb T^d}\int_0^{+\infty} \left(\Phi^\delta -\sigma  \partial_s h  \right)^2 \rhostat  ds dx \\
  &+\frac{1}{\sigma} \int_{\mathbb T^d} \Phi^\delta \left(\Phi^\delta-\sigma \int_0^{+\infty} \partial_s h   \rhostat  ds \right) dx + 2 \int_{\mathbb T^d} \Phi^\delta \int_0^{+\infty}  \partial_s h  (h-1) \rhostat  ds dx\\
  &- \frac{1}{\sigma}\int_{\mathbb T^d} \brho^\delta \, \Phi'_\brho W \ast \left(\Phi^\delta - \sigma \int_0^{+\infty}  \partial_s h   \rhostat  ds \right) dx \\
   =& -\frac{2}{\sigma}\int_{\mathbb T^d}\int_0^{+\infty} \left(\Phi^\delta -\sigma  \partial_s h  \right)^2 \rhostat  ds dx \\
& +\frac{1}{\sigma} \int_{\mathbb T^d} \left(\Phi^\delta - W\ast (\Phi_\brho'\brho^\delta)\right)\int_0^{+\infty} \left(\Phi^\delta-\sigma  \partial_s h   \right) \rhostat  ds dx \\
& - \frac{2}{\sigma} \int_{\mathbb T^d} \int_0^{+\infty} (h-1) \Phi^\delta \left( \Phi^\delta -\sigma \partial_s h \right) \rhostat ds dx \\
  =&\,I+II+III,
\end{align*}
where the last equality follows by the symmetry of $W$ and noting that $\Phi^\delta$ does not depend on $s$ and that $\int_0^{+\infty} (h-1) \rhostat ds = 0$. Notice first that
\[ 
I  = -\frac2\sigma \big\| \Phi^\delta -\sigma  \partial_s h  \big\|_{L^2_{\rhostat}}^2 \,.
\]
Then, for for $II$, 
\begin{align*}
    II \leq\, &\frac{1}{\sigma} \int_{\mathbb T^d}\big|\Phi^\delta - W\ast (\Phi_\brho'\brho^\delta)\big|\| \Phi^\delta-\sigma  \partial_s h  \|_{L^2_{\rhostat(x)}} dx\\ \leqslant\, & \frac{1}{\sigma}\int_{\mathbb T^d}\big\| \Phi^\delta -\sigma  \partial_s h  \big\|_{L^2_{\rhostat(x)}} \big|\Phi^\delta - W\ast (\Phi_\brho'\brho^\delta)\big|dx\\
    \leqslant\, & \frac{1}{\sigma}\big\| \Phi^\delta -\sigma  \partial_s h  \big\|_{L^2_{\rhostat}}\left(\int_{\mathbb T^d}\big|\Phi^\delta - W\ast (\Phi_\brho'\brho^\delta)\big|^2 dx\right)^\hf,
\end{align*}
and for $III$, we have
\begin{align*}
    III \leq\, &\frac{2}{\sigma} \int_{\mathbb T^d} |\Phi^\delta| \|h-1\|_{L^2_{\rhostat(x)}} \| \Phi^\delta -\sigma  \partial_s h \|_{L^2_{\rhostat(x)}}dx\\
    \leq \, & \frac{2}{\sigma}\int_{\mathbb T^d} \big\| \Phi^\delta -\sigma  \partial_s h \big\|_{L^2_{\rhostat(x)}} |\Phi^\delta| \|h-1\|_{L^2_{\rhostat(x)}}  dx\\
    \leq \, & \frac{1}{\sigma} \big\| \Phi^\delta -\sigma  \partial_s h \big\|_{L^2_{\rhostat}}  2 \left(\int_{\mathbb T^d}|\Phi^\delta|^2 \|h-1\|_{L^2_{\rhostat(x)}}^2 dx\right)^\hf,
\end{align*}
yielding
\begin{align}
   \frac{d}{dt}\int_{\mathbb T^d} & 2\mathcal{E}- \frac{1}{\sigma}\mathcal{H} \, dx \nonumber \\
\leq & \, -\frac{1}{\sigma}\big\| \Phi^\delta -\sigma  \partial_s h  \big\|_{L^2_{\rhostat}}\left(2\big\| \Phi^\delta -\sigma  \partial_s h  \big\|_{L^2_{\rhostat}}- \left(\int_{\mathbb T^d}\big|\Phi^\delta - W\ast (\Phi_\brho'\brho^\delta)\big|^2 dx\right)^\hf\right. \nonumber \\
   & \qquad\qquad\qquad\qquad\qquad\qquad\qquad\qquad\qquad\left. - 2 \left(\int_{\mathbb T^d}|\Phi^\delta|^2 \|h-1\|_{L^2_{\rhostat(x)}}^2 dx\right)^\hf\right). \label{eq:estsofar}
\end{align}
Let us denote 
\begin{align*}
    \mathcal{Q}:=\int_{\mathbb T^d} \mathcal E - \frac1\sigma \mathcal H\,  dx.
\end{align*}
We now sketch the strategy of the remaining steps of the proof. We first claim that $\mathcal{Q}$ is equivalent to the relative entropy as
\begin{equation}\label{new0}
0\,\leqslant\, 2\alpha \int_{\mathbb T^d}\mathcal{E}dx\,  \leqslant\, \mathcal Q \,\leqslant \,\left(1+\frac{\|\Phi'\|_\infty}{\sigma}\|W\|_{L^2(\mathbb T^d)}M_\infty\right) \int_{\mathbb T^d}\mathcal{E}dx. 
\end{equation}
This will be shown in Step 2 below. Then, with the estimates 
\begin{align}\label{new1}
\big\| \Phi^\delta -\sigma  \partial_s h \big\|_{L^2_{\rhostat}} \geqslant \sigma\gamma(\rhostat)^\hf\alpha \mathcal Q^\hf
\end{align}
and
\begin{align}\label{new2}
\left(\int_{\mathbb T^d}\big|\Phi^\delta - W\ast (\Phi_\brho'\brho^\delta)\big|^2 dx\right)^\hf +  2 \left(\int_{\mathbb T^d}|\Phi^\delta|^2 \|h-1\|_{L^2_{\rhostat(x)}}^2 dx\right)^\hf \leqslant   2 C_4 \mathcal Q,
\end{align}
 where $C_4$ is a positive constant, we show exponential decay of $\mathcal{Q}$, which then yields exponential decay of the relative entropy. The proofs of the estimates are postponed to Step 3 and 4. 
 
{\bf Step 1. Exponential decay of $\mathcal{Q}$.-} Assuming \eqref{new0}-\eqref{new2}, let us finalize the proof. From \eqref{eq:estsofar}, we conclude
\begin{align}\label{eq:Qdecay}
\frac{d}{dt} \mathcal{Q} \leq \, -2\gamma(\rhostat)^\hf \big\| \Phi^\delta -\sigma  \partial_s h \big\|_{L^2_{\rhostat}}\left(\alpha \mathcal{Q}^\hf - C_4 \mathcal{Q} \right).
\end{align}
Thus, by choosing $\rho_0$ close enough in relative entropy to $\rhostat$, we can ensure that the term between the parantheses in \eqref{eq:Qdecay} is positive initially, and so $\frac{d}{dt} \mathcal{Q} \leq 0$, which then will keep the term between the parenthesis positive. This allows us to apply \eqref{new1} again, such that we arrive at 
\begin{align}\label{eq:Q}
\frac{d}{dt} \mathcal{Q} &\leq \, -2\sigma\gamma(\rhostat) \alpha \mathcal{Q}^\hf \left(\alpha \mathcal{Q}^\hf - C_4 \mathcal{Q} \right) \nonumber \\
& = -2\sigma\gamma(\rhostat) \alpha \mathcal{Q} \left(\alpha - C_4 \mathcal{Q}^\hf \right)\\
& \leq -2\sigma\gamma(\rhostat)  \alpha \left(\alpha - C_4 \mathcal{Q}^\hf_0 \right) \mathcal{Q}, \nonumber
\end{align}
where $\mathcal{Q}^\hf_0 = \mathcal{Q}^\hf(t=0)$, yielding exponential decay of $\mathcal{Q}$, and, due to \eqref{new0}, consequently of \eqref{eq:relentropy} whenever $\rho_0$ is close enough in relative entropy to $\rhostat$.

{\bf Step 2. Positivity and equivalence to relative entropy of $\mathcal{Q}$.-} As in the proof of \cite[Thm. 3.6]{CHS}, let 
\begin{align}\label{eq:changevar}
h(x,s,t)-1 = V(x,s,t) + A(x,t)(s-\brho_\infty),    
\end{align}
where $A(x,t) $ is chosen such that $\int_0^{+\infty} V(s-\brho_\infty)\rhostat ds = 0$ for all $x \in \mathbb T^d$, i.e. $A(x,t) = \frac{\brho^\delta}{M_\infty}$. We first rewrite
\begin{align*}
    \mathcal{Q}=\int_{\mathbb T^d} \int_0^{+\infty} V^2 \rhostat ds +M_\infty \left( A^2 - \frac{\Phi^\delta}{\sigma} A \right)\, dx  . 
\end{align*}
Due to \eqref{eq:nonlinstab-condition}, we have 
\begin{align*}
 \int_{\mathbb T^d}A^2 dx \leq \frac{1}{\alpha}  \left(  \int_{\mathbb T^d}A^2 - \frac{1}{\sigma}\Phi^\delta A \, dx \right).
\end{align*}
This fact entails
\begin{align*}
    2\alpha \int_{\mathbb T^d}\mathcal{E}dx &= \alpha \int_{\mathbb T^d} \int_0^{+\infty}V^2\rhostat ds + M_\infty A^2 dx  \nonumber \\
    &\leq \int_{\mathbb T^d} \alpha \int_0^{+\infty}V^2\rhostat ds + M_\infty \left(A^2 - \frac{\Phi^\delta}{\sigma}A\right) dx \nonumber \\
    &\leq \mathcal{Q} \\
    &\leq \int_{\mathbb T^d}\mathcal{E}dx + \frac{1}{\sigma}\int_{\mathbb T^d} |\Phi^\delta||\brho^\delta|dx \nonumber\\
    &\leq \left(1+\frac{\|\Phi'\|_\infty}{\sigma}\|W\|_{L^2(\mathbb T^d)}M_\infty\right) \int_{\mathbb T^d}\mathcal{E}dx, \nonumber 
\end{align*}
and concludes the desired equivalence \eqref{new0}.

{\bf Step 3. Proof of \eqref{new1}.-}
By the Poincar\'e inequality with respect to the measure $\rhostat$ in \eqref{eq:poincare} with $g = s\Phi^\delta - \sigma (h-1)$, at each $x$ we have 
\begin{align}
 \big\| \Phi^\delta -\sigma  \partial_s h \big\|_{L^2_{\rhostat(x)}}^2 &\geq \gamma (\rhostat )\sigma^2 \left(\|h-1\|_{L^2_{\rhostat(x)}}^2+\frac{M_\infty}{\sigma^2} (\Phi^\delta)^2 -\frac{2}{\sigma}\Phi^\delta \brho^\delta\right)\nonumber\\
 &= \gamma (\rhostat )\sigma^2 \left(2\mathcal{E}-\frac{1}{\sigma }\mathcal{H} +\frac{M_\infty}{\sigma^2} (\Phi^\delta)^2 -\frac{1}{\sigma}\Phi^\delta \brho^\delta\right).\label{eq:important_poincare}
\end{align}
Since
\begin{align*}
  2\mathcal{E}-\frac{1}{\sigma }\mathcal{H} = \int_0^{+\infty} V^2 \rhostat ds + M_\infty A^2 - \frac{M_\infty}{\sigma}\Phi^\delta A,
\end{align*}
the terms between the brackets of \eqref{eq:important_poincare} simplify to 
\begin{align*}
 2\mathcal{E}-\frac{1}{\sigma }\mathcal{H} +\frac{M_\infty}{\sigma^2} (\Phi^\delta)^2 -\frac{1}{\sigma}\Phi^\delta \brho^\delta = \int_0^{+\infty} V^2 \rhostat ds + M_\infty \left(A-\frac{\Phi^\delta}{\sigma} \right)^2.
\end{align*}
Notice that
\begin{align}\label{eq:APhisquared}
 \left(A-\frac{\Phi^\delta}{\sigma} \right)^2 \geq   
 \begin{cases}
 A^2 - \frac{\Phi^\delta}{\sigma} A, \quad &\sgn(\Phi^\delta) \neq \sgn(A),  \\
\alpha \left(  A^2 - \frac{\Phi^\delta}{\sigma} A\right), \quad &\sgn(\Phi^\delta) = \sgn(A), |\Phi^\delta| \geq \sigma (1-\alpha)|A|,  \\
 \alpha^2 \left(  A^2 - \frac{\Phi^\delta}{\sigma} A\right), \quad &\sgn(\Phi^\delta) = \sgn(A), |\Phi^\delta| < \sigma (1-\alpha)|A|,
 \end{cases}
\end{align}
and thus, we conclude that
\[
\big\| \Phi^\delta -\sigma  \partial_s h \big\|_{L^2_{\rhostat}}^2 \geqslant \sigma^2\gamma(\rhostat) \alpha^2 \left(\int_{\mathbb T^d}\int_0^{+\infty} V^2 \rhostat ds + M_\infty \left(A^2-\frac{\Phi^\delta}{\sigma}A \right)dx\right) = \sigma^2\gamma(\rhostat) \alpha^2 \mathcal Q,
\]
leading to the claim \eqref{new1}.

{\bf Step 4. Proof of \eqref{new2}.-}
For the first term of \eqref{eq:estsofar}, using the mean value theorem, introduce
\begin{align*}
   \phi'(x) = \frac{\Phi^\delta (x)}{W \ast \brho^\delta (x)} = \Phi'\big( \eta(x) W\ast\brho+(1-\eta(x))W\ast\brho_\infty+B\big), 
\end{align*}
where $\eta(x)$ is a parameter which, for each $x$, yields the exact point between $W \ast \brho (x)$ and $W \ast \brho_\infty = \brho_\infty \int_{\mathbb T^d} W(x) dx $ to be used in the mean value theorem, $0\leq \eta(x)\leq 1$.
Then,
\begin{align}\label{eq:thedifference}
\left|\Phi^\delta - W\ast (\Phi_\brho'\brho^\delta)\right| & = \left|\int_{\mathbb{T}^d} \brho^\delta (y) W(x-y) \left(\phi'(x) - \Phi_\brho' (y) \right) dy \right|  \\
& \leq \|\Phi''\|_\infty \int_{\mathbb{T}^d} \int_{\mathbb{T}^d} \left| \brho^\delta (y) W(x-y) (W(x-z)\eta(x)+W(y-z))\brho^\delta(z) \right| dy dz \nonumber \\
& \leq 2 \|\Phi''\|_\infty \|W\|_{L^2(\mathbb T^d)}^2\int_{\mathbb{T}^d} (\brho^\delta )^2 dx. \nonumber
\end{align}

To summarise, replacing $h-1$ and $\brho^\delta$ by $V$ and $A$ of \eqref{eq:changevar}, and inserting the above estimates, we now have
\begin{align*}
  &\left(\int_{\mathbb T^d}\big|\Phi^\delta - W\ast (\Phi_\brho'\brho^\delta)\big|^2 dx\right)^\hf +  2 \left(\int_{\mathbb T^d}|\Phi^\delta|^2 \|h-1\|_{L^2_{\rhostat(x)}}^2 dx\right)^\hf\\ &\qquad\qquad\qquad\qquad\leq\, - 2 C_2 \int_{\mathbb{T}^d} A^2 dx   - \frac{2}{\sigma C_1}  \left( \int_{\mathbb T^d}(\Phi^\delta)^2 \left(\int_0^{+\infty} V^2 \rhostat ds + M_\infty A^2\right) dx \right)^\hf,
\end{align*}
where $C_1 = \gamma (\rhostat )^\hf$, and $C_2 = \frac{M_\infty^2\|\Phi''\|_\infty\|W\|_{L^2(\mathbb T^d)}^2}{\gamma (\rhostat )^\hf \sigma}$. Estimating further, we find that
\begin{align*}
  &\left(\int_{\mathbb T^d}\big|\Phi^\delta - W\ast (\Phi_\brho'\brho^\delta)\big|^2 dx\right)^\hf +  2 \left(\int_{\mathbb T^d}|\Phi^\delta|^2 \|h-1\|_{L^2_{\rhostat(x)}}^2 dx\right)^\hf\\ &\qquad\qquad\qquad\qquad\leq\,  C_2 \int_{\mathbb{T}^d} A^2 dx +C_3 \int_{\mathbb T^d}\int_0^{+\infty} V^2 \rhostat ds + M_\infty A^2 dx \\
  &\qquad\qquad\qquad\qquad \leq\,  C_3 \int_{\mathbb T^d}\int_0^{+\infty} V^2 \rhostat dsdx + (C_2+M_\infty) \int_{\mathbb T^d}A^2 dx,
\end{align*}
with $C_3= \frac{M_\infty\|\Phi'\|_\infty^2\|W\|_{L^2(\mathbb T^d)}^2}{\gamma (\rhostat )^\hf \sigma}$, after applying the estimate of $\Phi^\delta$ in the proof of Proposition \ref{prop:stab1} to the last term. 
It leads to
\begin{align*}
&\left(\int_{\mathbb T^d}\big|\Phi^\delta - W\ast (\Phi_\brho'\brho^\delta)\big|^2 dx\right)^\hf +  2 \left(\int_{\mathbb T^d}|\Phi^\delta|^2 \|h-1\|_{L^2_{\rhostat(x)}}^2 dx\right)^\hf\\ &\qquad\qquad\qquad\qquad \leq\,
C_4 \int_{\mathbb T^d}\int_0^{+\infty} V^2 \rhostat ds + M_\infty \left(A^2-\frac{\Phi^\delta}{\sigma}A \right) dx  = C_4 \mathcal Q,
\end{align*}
where $C_4 = \max \left\{C_3,\frac{C_2+M_\infty}{\alpha M_\infty} \right\}$, finalizing the proof of \eqref{new2}.
\end{proof}

\begin{remark}
There is a trade-off between the restrictiveness of the stability condition \eqref{eq:nonlinstab-condition} and the decay rate of $\mathcal Q$ in \eqref{eq:Q} with respect to $\alpha$. Keeping all other quantities fixed, increasing $\alpha$ improves the decay rate of $\mathcal Q$, while decreasing $\alpha$, makes condition \eqref{eq:nonlinstab-condition} less restrictive.  
\end{remark}

\begin{remark}
If we drop the symmetry assumption on the connectivity $W$ in Proposition \ref{prop:stab}, and instead assume $W \in L^2(\mathbb T^d)\cap \mathcal W^{1,\infty}(\mathbb T^d)$, we obtain exponential decay of \eqref{eq:relentropy} under \eqref{eq:stabcond3} and the additional condition
\begin{align}\label{eq:additionalcond}
       2L^d \leq \frac{\alpha^{\unitfrac{3}{2}}(2-\xi)\gamma(\rhostat)^\hf \sigma}{\|\nabla W\|_\infty\|\Phi'\|_\infty M_\infty^\hf},
\end{align}
for a suitably small $\xi >0$.
The result can be obtained by replacing $W\ast (\Phi_\brho'\brho^\delta) = \int_{\mathbb T^d}W(x-y)\Phi_\brho'(y)\brho^\delta(y) dy$ with $\int_{\mathbb T^d}W(y-x)\Phi_\brho'(y)\brho^\delta(y) dy$ in II and \eqref{eq:thedifference}, and then estimating the resulting difference as 
\begin{align*}
&\left|\Phi^\delta(x) - \int_{\mathbb T^d}W(y-x)\Phi_\brho'(y)\brho^\delta(y) dy\right| \\
&\quad \leq 2 \|\Phi''\|_\infty \|W\|_{L^2(\mathbb T^d)}^2\int_{\mathbb{T}^d} (\brho^\delta )^2 dx + \|\Phi'\|_\infty \int_{\mathbb T^d}\left|\brho^\delta (y) (W(x-y)-W(y-x))\right|dy,
\end{align*}
where the first term is as in the proof above and the last term pops up due to the lack of symmetry of $W$. By a similar argument to the last paragraphs of the proof of Proposition \ref{prop:4stab} for the four population model \eqref{eq:4PDE}, we get the additional condition \eqref{eq:additionalcond}, cf. \eqref{eq:shiftcondition}.
\end{remark}

\section{Extension of the main results to a model for grid cells}
\label{sec:4}

The stochastic neural field PDE system \eqref{eq:PDE} was first proposed in \cite{CHS} in the following more general form, with four populations of neurons with an orientation preference:
\begin{align}\label{eq:4PDE}
\tau_c \frac{\partial \rho^\beta}{\partial t} =
-\frac{\partial}{\partial s}\Bigg(
\Big[\Phi^\beta(x,t) -s\Big] \rho^\beta
\Bigg) + \sigma \frac{\partial^2 \rho^\beta}{\partial s^2},
\end{align}
where $\Phi^\beta(x,t)$ is given by
\begin{align}\label{eq:phi}
\Phi^\beta(x,t) = \Phi \left(\frac{1}{4}\sum_{\beta'=1}^4 \int_{\mathbb T^d} W^{\beta'}(x-y) \int_{0}^\infty s \rho^{\beta'} (y,s,t)\, \diff s \diff y + B^\beta(t) \right),
\end{align}
where the orientation preference in physical space is represented by the index $\beta =1,2,3,4$ (north, west, south, east), and with corresponding no-flux boundary and mass normalisation conditions. In the case $d=2$, the system \eqref{eq:4PDE} models a network of noisy grid cells \cite{CHS}. The orientation preference is modelled by $\beta-$dependent shifts $r^\beta$ and connectivity $W^\beta(x-y)=W(x-y-r^\beta)$. The movement of an animal can be connected to this PDE system \eqref{eq:4PDE} through the orientation and time dependent input $B^\beta(t)$. The system was rigorously derived from a stochastic particle system in \cite{CCS} and its noise-driven bifurcations from the homogeneous stationary state were rigorously characterised in \cite{CRS}.

In the same fashion as in \cite{CRS}, the main analytical results obtained on the solutions of \eqref{eq:PDE}-\eqref{eq:BC} can be extended to the more general system \eqref{eq:4PDE}. By checking the key points of the proof and without writing all the technical details, we first extend the local and global in time existence results of Theorem \ref{thm:main} to the four population system to a very general setting, before the asymptotic convergence results of Theorem \ref{thm:main2} are extended to \eqref{eq:4PDE}.

\subsection{Global-in-time solutions for Lipschitz modulation functions}

The proof of local and global existence of solutions for the four population model \eqref{eq:4PDE}-\eqref{eq:phi} being a direct extension of our work in Section \ref{sec:2}, we will only go through the main steps in order to check that no additional difficulty endangers the method. 

We can apply the same change of variables as before to each population
\[  y=e^{t}s      \qquad  \tau = \dfrac{1}{2}(e^{2t}-1)   ,\qquad  q^\beta(x,y,\tau) = \alpha(\tau) \rho^\beta\Big(x,y\alpha(\tau), -\log(\alpha(\tau))\Big), \]
and then we can apply separately to each population the change of variable
\begin{align} &\gamma^\beta(x,\tau) = - \int_{0}^{\tau} \Phi^\beta\left( \alpha(\eta)\sum_{\beta=1}^{4} W \ast \bar q^\beta(x,\eta)  + \lambda^\beta(\eta)\right)\alpha(\eta) d \eta,\\ & u^\beta(x,z,\tau)=q^\beta(x,y,\tau), \qquad\bar u^\beta = \int_{\gamma^\beta(x,\tau)}^{+\infty} zu^\beta(z,\tau)dz, \quad v^\beta(x,\tau)= u^\beta(x,\gamma(x,\tau),\tau). \end{align}

Then, we obtain a similar free boundary Stefan problem with non-local Robin boundary condition: for $\beta\in\{1,2,3,4\}$,
\begin{equation}\label{eq:4Stefan}
\left\{\begin{array}{rcl}
\displaystyle \dfrac{\partial u^\beta}{\partial \tau}(x,z,\tau) &=& \dfrac{\partial^2 u^\beta}{\partial z^2}(x,z,\tau), \qquad\qquad\qquad\  z\in(\gamma^\beta(x,\tau), +\infty), \\[2mm]
\displaystyle \gamma^\beta(x,\tau) &=& - \displaystyle\int_{0}^{\tau} \Psi^\beta(x,\eta) d \eta, \\
\dfrac{\partial u^\beta}{\partial y}(x,\gamma(x,\tau),\tau) &=& \Psi^\beta(x,\tau) u^\beta(x,\gamma(x,\tau),\tau), \\
\Psi^\beta(x,\tau) &=&\displaystyle \Phi\left( \alpha(\tau) \sum_{\beta=1}^{4} W^\beta \ast [ \bar u^\beta (x,\tau) - \gamma^\beta(x,\tau)] + \lambda^\beta(\tau)\right)\alpha(\tau),  \\
\displaystyle u^\beta(x,z,0)&=&u^{0,\beta}(x,z),\qquad\qquad \qquad\qquad z\in\,(0,+\infty).
\end{array}\right.
\end{equation}

The exact same method that we used in Section \ref{sec:2} for the one population model allows us to derive Duhamel formulae for $u^\beta, v^\beta, \gamma^\beta$.\\

For the solution:
\begin{equation}\label{eq:Duhamel4pop}
    u^\beta(x,z,\tau) = \int_{0}^{+\infty} G(z,\tau, \xi ,0) u^{0,\beta}(x,\xi) d\xi + \int_{0}^{\tau}  \dfrac{\partial G}{\partial \xi}(z,\tau,\gamma^\beta(x,\eta),\eta)\, v^\beta(x,\eta) d\eta.
\end{equation}

For the boundary value:
\begin{align}\label{never_gonna_give_you_up}
    v^\beta(x,\tau) =&\, 2 \int_{0}^{+\infty} G(\gamma^\beta(x,\tau),\tau, \xi ,0) u^{0,\beta}(x,\xi) d\xi\nonumber\\ &\,+ 2\int_{0}^{\tau}  \dfrac{\partial G}{\partial \xi}(\gamma^\beta(x,\tau),\tau,\gamma^\beta(x,\eta),\eta)\, v^\beta(x,\eta) d\eta.
\end{align}

For the average:
\begin{align}\label{never_gonna_let_you_down}
    \bar u^\beta (x,\tau) =&\, \int_{0}^{+\infty} \int_{\gamma^\beta(x,\tau)}^{+\infty} zG(z,\tau, \xi ,0)dz u^{0,\beta}(x,\xi) d\xi\nonumber\\&\, + \int_{0}^{\tau} \int_{\gamma(x,\tau)}^{+\infty} z \dfrac{\partial G}{\partial \xi}(z,\tau,\gamma^\beta(x,\eta),\eta) dz\, v^\beta(x,\eta) d\eta.
\end{align}

For the motion of the boundary:
\begin{equation}\label{never_gonna_run_around_and_desert_you}
    \gamma^\beta(x,\tau) =  - \int_0^\tau \Phi\left( \alpha(\eta) \sum_{\beta=1}^{4}W^\beta \ast [ \bar u^\beta (x,\eta) - \gamma^\beta(x,\eta)] + \lambda^\beta(\eta)\right)\alpha(\eta)d\eta.
\end{equation}

Hence, if we denote in capital letters the vectors containing entries for all $\beta$, we can write the same kind of closed system as before
\begin{equation}\label{eq:closed4pop}
    \left\{\begin{array}{rcl}
        V(x,\tau)      &=&   F_v[V,\Gamma, \bar U](x,\tau) \\
        \Gamma(x,\tau) &=&   F_\gamma[V,\Gamma,\bar U](x,\tau)\\
        \bar U(x,\tau) &=&  F_{\bar u}[V,\Gamma,\bar U](x,\tau).
    \end{array}\right.
\end{equation}

We can then consider the space
    \[  \mathcal C_{\tau_0,m} = \{ \textbf{w} \in C^0(\mathbb T^d \times [0,\tau_0])^8\ | \  \norme{\textbf{w}}_{\infty} < m\},  \]
where we denote \[\norme{\textbf{w}}_\infty = \max_{i\in\{1,\dots,8\}}\ \norme{w_i}_{L^{\infty}(\mathbb T^d\times [0,\tau_0])}.\]
On this space, we consider the functional
\begin{equation}\label{eq:functional4pops}
    \begin{array}{rccl}
         \mathcal T:& \mathcal C_{\tau_0,m} & \to & C^0(\mathbb T^d\times [0,\tau_0])^8  \\
                   & (V,\bar U) & \mapsto &  ( F_v[V,\Gamma, \bar U], F_{\bar u}[V,\Gamma,\bar U]),
    \end{array}
\end{equation}
where  $\Gamma=(\gamma^1,\gamma^2,\gamma^3,\gamma^4)$ is the unique continuous solution of the coupled initial value problems
\begin{equation}\label{eq:gamma_ODE_4}
     \left\{\begin{array}{rcl}
          \dfrac{\partial \gamma^\beta}{\partial \tau}(x,\tau) & = &\displaystyle -\Phi\left( \alpha(\tau) \sum_{\beta=0}^{4} W^\beta \ast [ \bar u^\beta (x,\tau) - \gamma^\beta(x,\tau)] + \lambda^\beta(\tau)\right)\alpha(\tau),  \quad  \\
          \gamma^\beta(x,0)& = &  \gamma^{0,\beta}(x).
     \end{array}\right.
\end{equation}

Like previously in Lemma \ref{lm:gamma}, we can prove that if $\Phi$ is locally Lipschitz, $W^\beta\in L^1(\mathbb T^d)$ and $m>0$ is fixed, then there exists $\tau_0\in(0,1]\,$ small enough depending only on $\Phi,\lambda^\beta$ and $m$ such that for all $\bar u\in C^0(\mathbb T^d\times [0,\tau_0])$ and $\gamma^0\in C^0(\mathbb T^d)$ satisfying 
    \[  \max_{\beta\in\{1,2,3,4\}}\norme{\bar u^\beta}_{L^\infty(\mathbb T^d\times[0,\tau_0])} \leqslant m, \qquad \mathrm{and} \qquad \max_{\beta\in\{1,2,3,4\}}\norme{\gamma^{0,\beta}}_{L^\infty(\mathbb T^d)}\leqslant \frac m2, \]
    there exists a unique solution $\Gamma\in C^0(\mathbb T^d\times [0,\tau_0])^4$ to \eqref{eq:gamma_ODE_4}. For all $x\in \mathbb T^d$, $\tau\mapsto \Gamma(x,\tau)$ is a $C^1$ function and, denoting $\Psi  =  (\Psi^1,\Psi^2,\Psi^3,\Psi^4)$,
    \begin{equation*}
        \norme{\Psi}_{L^\infty(\mathbb T^d\times [0,\tau_0])^4} \leqslant C_\Psi,\qquad \mathrm{and} \qquad \norme{\Gamma}_{L^\infty(\mathbb T^d\times [0,\tau_0])^4} \leqslant m.
    \end{equation*}

Now we can assume without loss of generality that $\Gamma^0 \equiv (0,0,0,0)$ like we did above for the one population case, and choose
\[ m = \max_{\beta\in\{1,2,3,4\}}\left( \max\left(\ 2\norme{u^{0,\beta}}_{L^\infty(\mathbb T^d \times \R_+)}\ ,\ \norme{z u^{0,\beta}}_{L^\infty(\mathbb T^d, L^1(\R_+))}\ \right) + 1\right).  \]
Then, we can follow, for each component $\beta\in\{1,2,3,4\}$, the proof of Proposition \ref{prop:local}. The interaction appearing only in the terms $\Psi^\beta$ which are uniformly controlled, no additional difficulty can arise and the exact same techniques lead to a local existence result for the closed system \eqref{eq:closed4pop}.

Then, for global existence, as we argued before, it is enough to obtain uniform bounds on $\Gamma$ or $\bar U$ for any compact interval of time. We can follow the proof of Theorem \ref{thm:global}: step 1 can be done with the sole modification of bounding the sum in oder to obtain
\[  \norme{\Gamma}_{L^\infty(\mathbb T^d\times [0,\tau^*))^4} \leqslant C^*;  \]
then, for step 2, we can apply the same test function $h$ in original variables and obtain
\begin{multline}
    \forall x\in \mathbb T^d, \quad  \int_0^{+\infty} h(s) \rho^\beta(x,s,t) ds =\int_0^{+\infty} h(s) \rho^{0,\beta}(x,s) ds \\  +\frac{1}{\tau_c} \int_0^t\int_0^{+\infty}\left[ (\Phi_{\bar \rho}^\beta(x,t') - s)\dfrac{d h}{ds}(s) + \sigma \dfrac{d^2 h}{ds^2}(s)\right] \rho(x,s,t') dsdt',
    \end{multline}
    and like before get to
    \begin{align*} \bar\rho^\beta(x,t) \leqslant \dfrac{1}{L^d} &+ \int_0^{+\infty} h(s) \rho^{0,\beta}(x,s) ds\\ &+ \dfrac{ T^*}{\tau_c L^d}\left( C_1^* \max_{\beta\in\{1,2,3,4\}} \left(\norme{W}_{L^1} \norme{\bar\rho^\beta}_{L^\infty(\mathbb T^d\times[0,t))} + \norme{B}_{L^\infty([0,T^*))}\right)  + C_2^*\right);   
\end{align*}
taking the supremum on $\beta$ in the left-hand side and $T^*$ small enough yields a locally uniform in time bound on $(\bar\rho^1,\bar\rho^2,\bar\rho^3,\bar\rho^4)$.
Hence, we can claim the following result.

\begin{theorem}[Solutions for the four population model]
    Assume $\Phi$ is locally Lipschitz, $B^\beta\in C^0(\R_+)$, $\beta\in\{1,2,3,4\}$, and $W\in L^1(\mathbb T^d)$. Then, for any compatible initial data $\rho^{0,\beta}$, $\beta\in\{1,2,3,4\}$,
    equation \eqref{eq:4PDE} admits a unique maximal solution in the sense of Definition \ref{def:classical} for each component.
    Moreover, if $\Phi$ is globally Lipschitz,
    \begin{itemize}
        \item the solution is global-in-time;
        \item if $\Phi$ non-negative, then
    \begin{equation*}
    \forall \beta\in\{1,2,3,4\},\ \forall t\in\R_+,\ \forall x\in\mathbb T^d, \quad \rho^\beta(x,0,t) \leqslant \dfrac{1}{L^d}\sqrt{ \dfrac{2}{ \pi \sigma } } \dfrac1{\sqrt{1-e^{-\frac{2t}{\tau_c}}}};
    \end{equation*}
    \item if $\Phi$ is non-positive, then for all  $\beta\in\{1,2,3,4\}$,
        \[  \forall t\in \R_+,\ \forall x\in \mathbb T^d, \quad  \bar\rho^\beta(x,t) \leqslant  \dfrac{1}{L^d}(1+\sigma - \sigma e^{-\frac{2t}{\tau_c}}) +  e^{-\frac{2t}{\tau_c}} \sup_{x\in\mathbb T^d}\int_0^{+\infty} s^2\rho^{0,\beta}(x,s)ds . \]
    \end{itemize}
\end{theorem}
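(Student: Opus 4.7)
The plan is to mirror step-by-step the arguments of Section \ref{sec:2} for the single-population case, exploiting the fact that the four populations are coupled only through an additive nonlocal term inside a single scalar function $\Phi$. First I would apply the selfsimilar rescaling $y=e^{t}s$, $\tau=\tfrac{1}{2}(e^{2t}-1)$ componentwise, then the characteristic shift $z^\beta=y-\int_0^\tau \Psi^\beta(x,\eta)\,d\eta$ for each $\beta$ to obtain the free-boundary system \eqref{eq:4Stefan}. Every computation in the derivation of \eqref{eq:Duhamel} transfers verbatim to each component, yielding the coupled Duhamel system \eqref{eq:Duhamel4pop}--\eqref{never_gonna_run_around_and_desert_you}.

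Next, for the local-in-time existence, I would replicate the strategy of Proposition \ref{prop:local} on the product space $\mathcal C_{\tau_0,m}\subset C^0(\mathbb T^d\times[0,\tau_0])^8$ endowed with the vector norm $\norme{\textbf{w}}_\infty=\max_{i\in\{1,\dots,8\}}\norme{w_i}_{L^\infty(\mathbb T^d\times[0,\tau_0])}$. The analogue of Lemma \ref{lm:gamma} solves for $\Gamma=(\gamma^1,\dots,\gamma^4)$ given a 4-tuple $\bar U=(\bar u^1,\dots,\bar u^4)$ bounded by $m$; the key observation is that the argument of $\Phi$ in each $\Psi^\beta$ is a finite sum of bounded convolutions, so the local Lipschitz bound on $\Phi$ yields contraction estimates with constants that only gain a factor $4$ from the number of populations. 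The four kernel estimates \eqref{eq:brick1}--\eqref{eq:brick3} used in Step 1 of Proposition \ref{prop:local} are purely properties of $G$ and independent of the coupling; Steps 2 and 3 then translate componentwise to prove the self-map and contraction properties for $\tau_0$ small enough. A Banach fixed-point argument produces a unique local maximal solution, and the blow-up criterion \eqref{eq:time_max} holds with the supremum taken over the eight components.

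For the global-in-time statement when $\Phi$ is globally Lipschitz, I would extend Lemmas \ref{lm:v_to_bar_rho} and \ref{lm:bar_rho_to_v} by replacing each scalar bound with the componentwise maximum, noting that the analogue of \eqref{eq:test_s} for $\bar\rho^\beta$ still involves a single $\Phi^\beta$ whose argument is controlled by $4\norme{W}_{L^1}\max_\beta \norme{\bar\rho^\beta}_\infty+\norme{B^\beta}_\infty$. Theorem \ref{thm:global} then goes through unchanged: Step 1 produces a conditional bound on $\Gamma$ from bounds on $\bar U$ and $V$; Step 2 applies the same cutoff test function $h$ separately to each $\rho^\beta$ and concludes by taking the supremum over $\beta$ in the resulting Grönwall-type inequality; Step 3 closes the bootstrap via the extended Lemma \ref{lm:bar_rho_to_v}. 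The two sign-dependent estimates follow componentwise: the bound on $\rho^\beta(x,0,t)$ when $\Phi\geq 0$ uses only the non-negativity of $\Psi^\beta$ in the representation \eqref{never_gonna_give_you_up}, a property inherited because the single scalar $\Phi$ controls the sign of every $\Psi^\beta$; likewise, the second-moment argument for $\Phi\leq 0$ is applied to each $\beta$ separately with the test function $h(s)=s^2$ in \eqref{eq:test_functions}, each equation for the second moment being closed thanks to the non-negativity of $\bar\rho^\beta$.

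The main obstacle is purely bookkeeping, namely ensuring that the various constants $C_\Psi$, $K_\Psi$, $C_\lambda$ appearing in the one-population proof can be chosen uniformly in $\beta$ and that the smallness requirements on $\tau_0$ aggregated across the eight components remain compatible. There is no genuinely new analytic difficulty, because the four populations interact only through a single scalar nonlocal sum inside $\Phi$, the heat kernel $G$ acts independently on each population, and the sign-preserving structure of $\Phi\mapsto \Psi^\beta$ is population-blind.
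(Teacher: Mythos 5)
Your proposal is correct and follows essentially the same route as the paper: componentwise change of variables to the four-population Stefan system, the coupled Duhamel formulae, a Banach fixed point on $C^0(\mathbb T^d\times[0,\tau_0])^8$ with the max norm (with the boundary positions $\Gamma$ handled by an extension of Lemma \ref{lm:gamma}), and global existence plus the sign-dependent bounds obtained componentwise after taking suprema over $\beta$, the only coupling being the uniformly controlled sum inside $\Phi$. The paper's own argument is exactly this verification that no new difficulty arises, so there is nothing to add.
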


\begin{remark}
    An alternative route for the extension of existence to the four population system would be, denoting $\Lambda= (\Lambda^1,\Lambda^2,\Lambda^3,\Lambda^4)$ a vector of four continuous functions of time, to introduce the following functional: consider for each $\beta$ the solution of the one population problem \eqref{eq:Stefan} with exterior input $\Lambda^\beta$. Given the four solutions of these one population problems, we define $\mathcal F(\Lambda)$ as the vector whose entries $\beta\in\{1,2,3,4\}$ are
    \[\mathcal F( \Lambda )_\beta = \sum_{\beta'=1,\ \beta'\neq\beta}^{4} W^\beta \ast [ \bar u^\beta (x,\tau) - \gamma^\beta(x,\tau)] + \lambda^\beta(\tau). \]
    From the regularities proved in the one population existence proof, it is possible to prove that this functional is a contraction on a suitable Banach space. Applications of the Banach fixed point theorem then yields a unique fixed point $(\Lambda^1,\Lambda^2,\Lambda^3,\Lambda^4)$. Using these four functions as exterior input in the one-population existence theorem and assembling these carefully crafted pieces, we recover the unique solution of the four population model.
    
    Yet another way to frame the extension is to introduce, instead of the vector $\bar U = (\bar u^1,\bar u^2,\bar u^3,\bar u^4)$, the scalar variable
    \[  \bar U = \sum_{\beta'=1}^4 W^{\beta'} \ast \bar u^{\beta'},   \]
    and then to sum the Duhamel formulae for $\bar u^\beta$ in order to include this quantity in the fixed point functional \eqref{eq:functional4pops}, thus symplifying the space upon which it is defined. This last extension framework was used in \cite{CRS} to extend bifurcation results from the one population system to the model for grid cells.
\end{remark}

\subsection{Long-time behaviour for constant and identical input functions}
Note that due to the arguments of \cite[Sec. 4]{CRS}, the stationary states for the four different orientation preferences $\beta$ are identical. Checking the proof of Proposition \ref{prop:stab1}, the result can be straightforwardly extended to \eqref{eq:4PDE}: 
\begin{corollary}
Let $B^\beta$ be constant and identical for $\beta = 1,2,3,4$. Let $\Phi$ be Lipschitz, $W \in L^2(\mathbb T^d)$, and let $\rhostat$ be a stationary state of \eqref{eq:4PDE}, equal for $\beta = 1,2,3,4$, given a $\sigma$. Assume that $\sigma$ satisfies 
\begin{align*}
\|\Phi' \|_\infty \|W\|_{L^2({\mathbb T^d})} \left(\sup_{x \in {\mathbb T^d}} M_\infty^\hf (x)\right) < 2 \sigma \tilde\gamma (\rhostat )^\hf,
\end{align*}
where $\tilde\gamma (\rhostat ) = \inf_{x \in \mathbb T^d} \gamma(\rhostat^\beta (x))$, and $\gamma(\rhostat(x))$ is the Poincar\'e constant of \eqref{eq:poincare} for $\rhostat (x)$, and $M_\infty$ is defined in \eqref{eq:minfty}. Then, the sum of the $L^1$ norms of the relative entropies,
\begin{align*}
 \sum_\beta \int_{\mathbb T^d} \int_{0}^{+\infty} \left( \dfrac{ \rho^\beta(x,s,t) - \rho_\infty(x,s) }{ \rho_\infty(x,s) } \right)^2 \rho_\infty(x,s) ds dx,
\end{align*} 
 decays exponentially fast whenever the initial data $\rho_0^\beta$ are close enough to the stationary state $\rhostat$ in relative entropy.
 \end{corollary}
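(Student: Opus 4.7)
The plan is to replay the proof of Proposition \ref{prop:stab1} applied to the aggregate quantity
\[\mathcal{S}(t) := \sum_{\beta=1}^4 \int_{\mathbb T^d}\int_0^{+\infty}\tfrac12(h^\beta-1)^2 \rhostat\,ds\,dx,\]
where $h^\beta = \rho^\beta/\rhostat$. The decisive simplification under the corollary's hypotheses is that, since all $B^\beta\equiv B$ coincide and the common stationary state $\rhostat$ is spatially homogeneous, the firing rate
\[\Phi^\beta_{\bar\rho}(x,t) = \Phi\Bigl(\tfrac14\sum_{\beta'=1}^4 W^{\beta'}\ast\bar\rho^{\beta'} + B\Bigr)\]
is independent of $\beta$; we denote it $\Phi_{\bar\rho}$. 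Moreover $\|W^{\beta'}\|_{L^2(\mathbb T^d)} = \|W\|_{L^2(\mathbb T^d)}$ for each $\beta'$, since the $W^{\beta'}$ are translates of $W$ on the torus.

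I would first derive a differential inequality for $\mathcal{S}$ by writing the analogue of \eqref{eq:h-timeevo} componentwise with $G(h)=\tfrac12(h-1)^2$, applying the same Young inequality to split the sign-indefinite cross term against the $|\partial_s h^\beta|^2 \rhostat$ dissipation, and summing over $\beta$. This yields three contributions: a dissipation $-\tfrac{\sigma}{2}\sum_\beta \iint|\partial_s h^\beta|^2 \rhostat\,ds\,dx$, a quadratic term in $|\Phi_{\bar\rho}-\Phi_0|$ multiplying the $s$-integrated square $(h^\beta-1)^2$, and a cross term involving $\int \partial_s h^\beta \rhostat\,ds$. The only new estimate required is on $|\Phi_{\bar\rho}-\Phi_0|$: using the Lipschitz property of $\Phi$ and the Cauchy-Schwarz chain from the proof of Proposition \ref{prop:stab1} on each of the four convolutions in $\tfrac14\sum_{\beta'}W^{\beta'}\ast(\bar\rho^{\beta'}-\bar\rho_\infty)$, together with the elementary inequality $(\sum_{\beta'=1}^4 a_{\beta'})^2 \leq 4\sum_{\beta'}a_{\beta'}^2$, one checks that the prefactor $\tfrac14$ in \eqref{eq:phi} ultimately produces the four-fold relaxation of the stability threshold relative to \eqref{eq:poincareestimate}.

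I would then close the argument by using the Poincaré inequality \eqref{eq:poincare} population by population, summed over $\beta$, to absorb the cross term and the quadratic $\Phi$-term into the $-\tfrac{\sigma}{2}$ dissipation, thereby obtaining a Grönwall-type inequality of the form $\tfrac{d\mathcal{S}}{dt} \leq [K_1 \mathcal{S} + K_2]\mathcal{S}$, with $K_2<0$ under the corollary's hypothesis and $K_1>0$. Choosing $\mathcal{S}(0)$ small enough keeps the bracket negative for all later times, and exponential decay of $\mathcal{S}$ follows. The main obstacle is essentially administrative: tracking carefully how the $\tfrac14$ averaging factor of \eqref{eq:phi} propagates through the Cauchy-Schwarz chain so as to recover the sufficient condition stated in the corollary, and verifying that the $\beta$-couplings do not obstruct the summation; no qualitatively new technical difficulty arises beyond what was already handled in the proof of Proposition \ref{prop:stab1}.
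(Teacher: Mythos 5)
Your strategy is exactly the one the paper intends: the paper's own ``proof'' of this corollary is the single sentence that the argument of Proposition \ref{prop:stab1} extends straightforwardly, and your proposal fleshes out precisely that — sum the relative entropies over $\beta$, use that identical constant inputs make $\Phi_{\bar\rho}$ independent of $\beta$, rerun the Cauchy--Schwarz chain on each of the four convolutions (noting $\|W^{\beta'}\|_{L^2}=\|W\|_{L^2}$ for translates on the torus), and close with the componentwise Poincar\'e inequality. The structure is correct and no $\beta$-coupling obstructs the summation.

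The one step you should not dismiss as administrative is the claim that the prefactor $\tfrac14$ in \eqref{eq:phi} yields the four-fold relaxation of the threshold from $\tfrac{\sigma}{2}\tilde\gamma(\rhostat)^{1/2}$ to $2\sigma\tilde\gamma(\rhostat)^{1/2}$. Set $E_\beta=\big(\int_{\mathbb T^d}\int_0^{+\infty}(h^\beta-1)^2\rhostat\,ds\,dx\big)^{1/2}$, $D_\beta=\big(\int_{\mathbb T^d}\int_0^{+\infty}|\partial_s h^\beta|^2\rhostat\,ds\,dx\big)^{1/2}$, $E^2=\sum_\beta E_\beta^2$, $D^2=\sum_\beta D_\beta^2$, and $C=\|\Phi'\|_\infty\|W\|_{L^2}\sup_x M_\infty^{1/2}$. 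The chain gives $|\Phi_{\bar\rho}-\Phi_0|\leq\tfrac{C}{4}\sum_{\beta'}E_{\beta'}\leq\tfrac{C}{2}E$, and the summed cross term is then bounded by $\tfrac{C}{2}E\sum_\beta D_\beta\leq CED$, against which the surviving dissipation satisfies $-\tfrac{\sigma}{2}D^2\leq-\tfrac{\sigma}{2}\tilde\gamma(\rhostat)^{1/2}DE$; this lands you back at the \emph{unrelaxed} condition $C<\tfrac{\sigma}{2}\tilde\gamma(\rhostat)^{1/2}$, because the two factors of $2$ produced by Cauchy--Schwarz over the four populations (once for the $E_{\beta'}$'s, once for the $D_\beta$'s) exactly cancel the $\tfrac14$. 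A sanity check points the same way: if all four populations coincide, the system reduces to a one-population model with kernel $\tfrac14\sum_{\beta'}W^{\beta'}$, whose $L^2$ norm can equal $\|W\|_{L^2}$ (e.g.\ nonnegative $W$ with small shifts), so no factor of $4$ can be gained in general. You should therefore either carry out the bookkeeping explicitly and exhibit where the extra factor of $4$ is recovered, or state the conclusion under the safe one-population threshold; as written, the assertion that ``one checks'' the relaxation is the missing step.
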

 
Extending the result of Proposition \ref{prop:stab} to the system \eqref{eq:4PDE} requires a bit more work as the stability condition in this case will depend on the size of the shifts $r^\beta$ as well.
\begin{proposition}\label{prop:4stab}
Let $B^\beta$ be constant and identical for $\beta = 1,2,3,4$, $\Phi \in C^2(\mathbb R)\cap \mathcal W^{2,\infty}(\R)$, and $W \in L^2(\mathbb T^d)\cap \mathcal W^{1,\infty}(\mathbb T^d)$. Assume that $\rhostat$ is a spatially homogeneous stationary state of \eqref{eq:4PDE}, equal for $\beta = 1,2,3,4$, given a $\sigma$. Furthermore, assume that $W$ is componentwise symmetric, and that $\sigma$ satisfies 
    \begin{align*}
    \sum_\beta \int_{\mathbb T^d} (1-\alpha)(g^\beta)^2(x) - \frac{\Phi^\delta_g(x)}{\sigma} g^\beta(x) \, dx > 0, \quad g^\beta \in L^2(\mathbb T^d),
    \end{align*}
where $$\Phi^\delta_g(x) =\Phi\left(\frac{M_\infty}{4} \sum_\beta W \ast g^\beta (x-r^\beta) +W_0 \brho_\infty +B\right)-\Phi_0,$$ and $M_\infty$ is defined in \eqref{eq:minfty}. Then, if the shifts $r^\beta$ satisfy 
\begin{align}\label{eq:shiftcondition}
    \max_{\beta,j} |r^\beta+r^j| \leq \frac{\alpha^{\unitfrac{3}{2}}(2-4\xi)\gamma(\rhostat)^\hf \sigma}{\|\nabla W\|_\infty\|\Phi'\|_\infty M_\infty}
\end{align}
for a suitably small $\xi>0$, the sum of the $L^1$ norms of the relative entropies,
\begin{align*}
 \sum_\beta \int_{\mathbb T^d} \int_{0}^{+\infty} \left( \dfrac{ \rho^\beta(x,s,t) - \rho_\infty(s) }{ \rho_\infty(s) } \right)^2 \rho_\infty(s) ds dx,
\end{align*} 
 decays exponentially fast whenever the initial data $\rho_0^\beta$ are close enough to the stationary state $\rhostat$ in relative entropy.
 \end{proposition}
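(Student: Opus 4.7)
The plan is to mirror the strategy of Proposition \ref{prop:stab} with appropriate summation over the population index $\beta$. First I would observe that, since $B^\beta \equiv B$ is common to all populations, the modulation \eqref{eq:phi} is in fact independent of $\beta$; denote this common value $\Phi_{\bar\rho}(x,t)$ and set $\Phi^\delta = \Phi_{\bar\rho} - \Phi_0$, $\bar\rho^{\delta,\beta} = \bar\rho^\beta - \bar\rho_\infty$. Introduce the population-wise quantities $\mathcal E^\beta = \tfrac{1}{2}\int_0^{+\infty}\bigl(\tfrac{\rho^\beta - \rho_\infty}{\rho_\infty}\bigr)^2\rho_\infty\,ds$ and $\mathcal H^\beta = \Phi^\delta \bar\rho^{\delta,\beta}$, and consider the aggregate functional
\begin{equation*}
\mathcal Q = \sum_{\beta=1}^4\int_{\mathbb T^d} \Bigl(2\mathcal E^\beta - \tfrac{1}{\sigma}\mathcal H^\beta\Bigr)\,dx.
\end{equation*}
Applying the splitting $h^\beta - 1 = V^\beta + A^\beta(s - \bar\rho_\infty)$ of \eqref{eq:changevar} population by population, the stability hypothesis on the sum over $\beta$ would provide, exactly as in Step 2 of the proof of Proposition \ref{prop:stab}, the equivalence $\mathcal Q \sim \sum_\beta\int_{\mathbb T^d}\mathcal E^\beta\,dx$; it therefore suffices to establish exponential decay of $\mathcal Q$.

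Next I would differentiate $\mathcal Q$ in time and combine \eqref{eq:h-timeevo} applied to each $h^\beta$ with the analog of \eqref{eq:barrhoevo} for each $\bar\rho^{\delta,\beta}$; the computation reproduces, summed over $\beta$, the three pieces $I$, $II$, $III$ of Proposition \ref{prop:stab}. The only genuinely new ingredient arises in the treatment of $II$: the time derivative $\partial_t\Phi^\delta = \Phi'_{\bar\rho}\cdot\tfrac{1}{4}\sum_{\beta'} W^{\beta'}\ast \partial_t\bar\rho^{\delta,\beta'}$ couples every $\beta'$ to every $\beta$ through the shifted kernels $W^{\beta'}(\cdot) = W(\cdot - r^{\beta'})$. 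The single-population proof used the symmetry of $W$ to transfer the convolution from one factor of the quadratic form to the other; here I would still use the componentwise symmetry of $W$ itself, which yields the Fubini identity
\begin{equation*}
\int_{\mathbb T^d} f(x)(W^{\beta'}\ast g)(x)\,dx = \int_{\mathbb T^d} g(y)(\widetilde W^{\beta'}\ast f)(y)\,dy, \qquad \widetilde W^{\beta'}(z):=W(z+r^{\beta'}).
\end{equation*}
Since $\widetilde W^{\beta'}\neq W^{\beta}$ in general, symmetrizing the double sum $\tfrac{1}{4}\sum_{\beta,\beta'}\int \bar\rho^{\delta,\beta}\Phi'_{\bar\rho} W^{\beta'}\ast\partial_t\bar\rho^{\delta,\beta'}\,dx$ by averaging with its Fubini transpose will produce a residual pointwise controlled by
\begin{equation*}
|W^{\beta'}(z) - \widetilde W^{\beta}(z)| = |W(z - r^{\beta'}) - W(z + r^\beta)| \leq |r^\beta + r^{\beta'}|\|\nabla W\|_\infty,
\end{equation*}
which is the origin of the quantity $\max_{\beta,j}|r^\beta + r^j|$ appearing in \eqref{eq:shiftcondition}.

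From here the argument should be a routine reprise of Steps 3 and 4 of Proposition \ref{prop:stab}, carried out componentwise and summed over $\beta$: the Poincar\'e inequality combined with the convexity bounds \eqref{eq:APhisquared} yields
\begin{equation*}
\sum_\beta\int_{\mathbb T^d}\int_0^{+\infty}\bigl(\Phi^\delta - \sigma\partial_s h^\beta\bigr)^2\rho_\infty\,ds\,dx \geq \alpha^2\sigma^2\gamma(\rho_\infty)\,\mathcal Q,
\end{equation*}
while the Cauchy--Schwarz and mean-value estimates of Step 4 provide an upper bound of the form $C_4\mathcal Q$ on the sum of residuals, with $C_4$ having the same structure of constants as in the single-population proof. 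Adding the shift-induced residual $R$ from the preceding paragraph, I would reach
\begin{equation*}
\frac{d}{dt}\mathcal Q \leq -2\sigma\gamma(\rho_\infty)\alpha\bigl(\alpha - C_4\mathcal Q^{1/2}\bigr)\mathcal Q + R,\quad R \leq C\|\nabla W\|_\infty\|\Phi'\|_\infty\tfrac{M_\infty}{\sigma}\max_{\beta,j}|r^\beta + r^j|\,\mathcal Q.
\end{equation*}
The explicit bound \eqref{eq:shiftcondition} is designed to be sharp enough that, up to the slack $\xi > 0$, the coefficient of $\mathcal Q$ in $R$ is strictly smaller than $\alpha^2\sigma\gamma(\rho_\infty)$, so $R$ is absorbed into the leading negative quadratic term. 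Provided $\mathcal Q(0)$ is small enough, one obtains $\tfrac{d}{dt}\mathcal Q \leq -c\mathcal Q$ for some $c > 0$, and the equivalence with the sum of the relative entropies concludes the proof.

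The hard part will be precisely this absorption step: the quadratic form $\sum_{\beta,\beta'}\int\bar\rho^{\delta,\beta}\,W^{\beta'}\ast\bar\rho^{\delta,\beta'}\,dx$ is intrinsically non-symmetric in $(\beta,\beta')$ whenever $r^\beta + r^{\beta'}\neq 0$, and \eqref{eq:shiftcondition} is the sharp quantitative requirement reconciling the size of its antisymmetric part with the coercivity produced by the Poincar\'e step and the stability hypothesis; without it, the Fubini transpose $\widetilde W^\beta$ cannot be swapped against $W^{\beta'}$ for free.
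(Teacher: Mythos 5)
Your proposal follows essentially the same route as the paper's proof: the same aggregate functional $\mathcal Q$ summed over $\beta$, the same decomposition of $\tfrac{d}{dt}\mathcal Q$ into the three pieces, the same use of the componentwise symmetry of $W$ together with a translation on the torus to transpose the convolution, and the same identification of the residual $|W(z-r^{\beta'})-W(z+r^\beta)|\leqslant |r^\beta+r^{\beta'}|\,\|\nabla W\|_\infty$ as the source of condition \eqref{eq:shiftcondition}, absorbed into the coercive term exactly as in the paper (which likewise notes that this residual is of the same order as the leading term and so cannot be handled by smallness of the initial data alone). The only deviations are cosmetic, namely the precise powers of $\alpha$, $\gamma(\rhostat)$ and $M_\infty$ in the shift condition, which you leave implicit.
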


\begin{proof}
    We follow the lines of the proof of Proposition \ref{prop:stab}, but this time denote $$ \Phi^\delta = \Phi\left(\frac{1}{4}\sum_\beta W^\beta \ast \brho^\beta +B\right)-\Phi_0, \quad \bar \rho^\delta_\beta = \bar \rho^\beta - \bar \rho_\infty, \quad \Phi'_{\brho} = \Phi'\left(\frac{1}{4}\sum_\beta W^\beta \ast \brho_\beta + B\right),$$ and
\begin{align*}
\mathcal{E} =  \sum_\beta \int_0^{+\infty}  \frac{1}{2}(h^\beta-1)^2 \rhostat ds, \quad \mathrm{and} \quad \mathcal{H}=    \sum_\beta \Phi^\delta \bar \rho^\delta_\beta .
\end{align*}
We have
\begin{align*}
  \frac{d}{dt} \int_{\mathbb T^d} 2\mathcal{E}- \frac{1}{\sigma}\mathcal{H} \, dx =\,&  2 \int_{\mathbb T^d} \Phi^\delta \sum_\beta \int_0^{+\infty} \partial_s h^\beta (h^\beta-1) \rhostat  ds dx - 2\sigma \sum_\beta\int_{\mathbb T^d} \int |\partial_s h^\beta|^2 \rhostat ds dx \\
  & + 3 \int_{\mathbb T^d} \Phi^\delta \sum_\beta\int_0^{+\infty} \partial_s h^\beta  \rhostat  ds dx - \frac{4}{\sigma} \int_{\mathbb T^d} (\Phi^\delta)^2 dx \\
  &- \frac{1}{\sigma}\int_{\mathbb T^d} \left(\sum_\beta \brho^\delta_\beta \right)\, \Phi'_\brho \frac{1}{4}\sum_\beta W^\beta \ast \left(\Phi^\delta - \sigma \int_0^{+\infty} \partial_s h^\beta  \rhostat  ds \right) dx\\
  =& -\frac{2}{\sigma}\sum_\beta\int_{\mathbb T^d}\int_0^{+\infty} \left(\Phi^\delta -\sigma \partial_s h^\beta \right)^2 \rhostat  ds dx \\
  &+\frac{1}{\sigma} \sum_\beta\int_{\mathbb T^d} \Phi^\delta \left(\Phi^\delta-\sigma \int_0^{+\infty} \partial_s h^\beta  \rhostat  ds \right) dx \\
  &+ 2 \int_{\mathbb T^d} \Phi^\delta \sum_\beta \int_0^{+\infty} \partial_s h^\beta (h^\beta-1) \rhostat  dsdx\\
  &- \frac{1}{\sigma}\int_{\mathbb T^d} \left(\sum_\beta \brho^\delta_\beta \right)\, \Phi'_\brho \frac{1}{4}\sum_\beta W^\beta \ast \left(\Phi^\delta - \sigma \int_0^{+\infty} \partial_s h^\beta  \rhostat  ds \right) dx \,.
\end{align*}
Using the symmetry of $W$,
\begin{align*}
& - \frac{1}{\sigma}\int_{\mathbb T^d} \left(\sum_\beta \brho^\delta_\beta \right)\, \Phi'_\brho \frac{1}{4}\sum_\beta W^\beta \ast \left(\Phi^\delta - \sigma \int_0^{+\infty} \partial_s h^\beta  \rhostat  ds \right) dx\\
=&- \frac{1}{4\sigma}\sum_j \sum_\beta\int_{\mathbb T^d}\int_{\mathbb T^d}  \brho^\delta_\beta(x) \, \Phi'_\brho(x)\,  W(x-y-r^j)  \left(\Phi^\delta(y) - \sigma \int_0^{+\infty} \partial_s h^j(y)  \rhostat  ds \right) dydx\\
=&- \frac{1}{4\sigma}\sum_j \sum_\beta\int_{\mathbb T^d}\int_{\mathbb T^d}  \brho^\delta_\beta(x) \, \Phi'_\brho(x)\,  W(y+r^j-x)  \left(\Phi^\delta(y) - \sigma \int_0^{+\infty} \partial_s h^j(y)  \rhostat  ds \right) dydx\\
=&- \frac{1}{4\sigma}\sum_j \sum_\beta\int_{\mathbb T^d} W\ast\left( \brho^\delta_\beta \, \Phi'_\brho\right)(y+r^j)\,   \left(\Phi^\delta(y) - \sigma \int_0^{+\infty} \partial_s h^j(y)  \rhostat  ds \right) dy\\
=& -\frac{1}{\sigma} \sum_j \int_{\mathbb T^d} \left(\frac{1}{4}\sum_\beta W\ast \big(\Phi_\brho'\,\brho^\delta_\beta\big)(x)\right) \int_0^{+\infty} \left(\Phi^\delta-\sigma \partial_s h^j  \right)(x-r^j) \rhostat  ds dx
\end{align*}
where the last equality is just a change of variable $x=y+r^j$. Now, taking into account 
\begin{align*}
\int_{\mathbb T^d} \Phi^\delta \left(\Phi^\delta-\sigma \int_0^{+\infty} \partial_s h^j  \rhostat  ds \right) dx 
= \int_{\mathbb T^d} \Phi^\delta(x-r^j) \int_0^{+\infty} \left(\Phi^\delta-\sigma \partial_s h^j  \right)(x-r^j) \rhostat  ds dx
\end{align*}
and
$$
\int_{\mathbb T^d} \int_0^{+\infty} (\Phi^\delta)^2 \sum_\beta (h^\beta-1) \rhostat ds dx = 0\,,
$$
we can rewrite the right hand side as 
\begin{align*}
  \frac{d}{dt} \int_{\mathbb T^d} 2\mathcal{E}- \frac{1}{\sigma}\mathcal{H} \, dx =\,&
 -\frac{2}{\sigma}\sum_\beta\int_{\mathbb T^d}\int_0^{+\infty} \left(\Phi^\delta -\sigma \partial_s h^\beta \right)^2 \rhostat  ds dx \\
& +\frac{1}{\sigma} \sum_j \int_{\mathbb T^d} \left(\Phi^\delta(x-r^j) - \frac{1}{4}\sum_\beta W\ast \big(\Phi_\brho'\,\brho^\delta_\beta\big)(x)\right)\\
& \qquad\qquad\qquad  \times \int_0^{+\infty} \left(\Phi^\delta-\sigma \partial_s h^j  \right)(x-r^j) \rhostat  ds dx \\
& - \frac{2}{\sigma} \int_{\mathbb T^d} \int_0^{+\infty} \Phi^\delta \sum_\beta (h^\beta-1) \left( \Phi^\delta -\sigma \partial_s h^\beta\right) \rhostat ds dx \\
& = I + II + III, 
\end{align*}
where 
\begin{align*}
    II &\leq \frac{1}{\sigma} \sum_j \int_{\mathbb T^d}\bigg|\Phi^\delta(x-r^j) - \frac{1}{4}\sum_\beta W\ast \big(\Phi_\brho'\brho^\delta_\beta\big)(x)\bigg|\| \Phi^\delta-\sigma  \partial_s h^j  \|_{L^2_{\rhostat(x)}}(x-r^j) dx, \\
    III &\leq \frac{2}{\sigma} \int_{\mathbb T^d} |\Phi^\delta| \sum_\beta \|h^\beta-1\|_{L^2_{\rhostat(x)}} \| \Phi^\delta -\sigma \partial_s h^\beta \|_{L^2_{\rhostat(x)}}dx.
\end{align*}
Now let
\begin{align*}
   \phi'(x) = \frac{\Phi^\delta (x)}{\frac{1}{4}\sum_\beta W^\beta \ast \brho^\delta_\beta (x)} = \Phi'\left( \frac{1}{4}\sum_\beta \left[\eta(x) (W^\beta\ast\brho^\beta)+(1-\eta(x))(W^\beta\ast\brho_\infty)\right]+B\right), 
\end{align*}
for some $0\leq \eta(x)\leq 1$ in virtue of the mean-value theorem. Then, through the following estimate, the shifts come into play:
\begin{align*}
\bigg|&\Phi^\delta(x-r^j) - \frac{1}{4}\sum_\beta W\ast \big(\Phi_\brho'\,\brho^\delta_\beta\big)(x)\bigg|\\
& \qquad = \frac{1}{4}\left|\sum_\beta \int_{\mathbb{T}^d} \brho^\delta_\beta (y) \left( \phi'(x-r^j)W(x-y-r^\beta-r^j) - \Phi_\brho' (y)W(x-y) \right) dy \right|  \\
& \qquad \leqslant \frac{1}{4}\sum_\beta \int_{\mathbb{T}^d} |\brho^\delta_\beta (y)|\left|\phi'(x-r^j)-\Phi_\brho' (y)\right| |W(x-y)| dy \\
& \qquad \quad + \frac{1}{4}\sum_\beta \int_{\mathbb{T}^d} |\brho^\delta_\beta (y)||\phi'(x-r^j)| \left|W(x-y-r^\beta-r^j) -W(x-y)\right|  dy \\
& \qquad = A + B.
\end{align*}
By using the definition of $\phi'(x)$ and applying the mean value theorem to $\Phi'$, we get
\begin{align*}
\left|\phi'(x-r^j)-\Phi_\brho' (y)\right| \leq \,&\frac{\|\Phi''\|_\infty }{4} \sum_i \left| \eta(x-r_j)(W^i\ast \brho^i)(x-r_j) \right.\\
                                            &\qquad \qquad\,\,\,\,\,\, \left. + (1-\eta(x-r_j))(W^i \ast \brho_\infty)-W^i\ast \brho^i(y) \right| \\
                                           = \,& \frac{\|\Phi''\|_\infty }{4} \sum_i \eta(x-r_j) \left| (W^i\ast \brho_i^\delta)(x-r_j) -W^i \ast  \brho^\delta_i(y) \right|
                                           \\
                                           \leq \,& \frac{\|\Phi''\|_\infty }{2} \|W\|_{L^2(\mathbb T^d)} \sum_{i} \|\brho_i^\delta\|_{L^2(\mathbb T^d)},
\end{align*}
where we used the Cauchy--Schwarz inequality in the convolutions. A further application of the Cauchy--Schwarz inequality yields
$$
A \leq  \frac{\|\Phi''\|_\infty }{8} \|W\|_{L^2(\mathbb T^d)}^2\sum_{\beta,i} \|\brho_\beta^\delta\|_{L^2(\mathbb T^d)}\|\brho_i^\delta\|_{L^2(\mathbb T^d)}.
$$
We proceed to estimate $B$ as
\begin{align*}
B &\leq  \frac{1}{4}\|\nabla W\|_\infty\|\Phi'\|_\infty\sum_\beta |r^\beta+r^j|\int_{\mathbb T^d} |\brho_\beta^\delta|dx \leq  \frac{1}{4}\|\nabla W\|_\infty\|\Phi'\|_\infty\sum_\beta |r^\beta+r^j|\,\|\brho_\beta^\delta\|_{L^2(\mathbb T^d)}. 
\end{align*}
Collecting the estimates above we finally conclude that
\begin{align*}
\bigg|\Phi^\delta(x-r^j) - \frac{1}{4}\sum_\beta W\ast \big(\Phi_\brho'\,\brho^\delta_\beta\big)(x)\bigg| \leq C_1\sum_{\beta} \|\brho_\beta^\delta\|_{L^2(\mathbb T^d)}^2 + C_2\sum_\beta |r^\beta+r^j|\, \|\brho_\beta^\delta\|_{L^2(\mathbb T^d)},
\end{align*}
with $C_1 = \frac{1}{2}\|\Phi''\|_\infty \|W\|_{L^2(\mathbb T^d)}^2$ and $C_2 = \frac{1}{4}\|\nabla W\|_\infty\|\Phi'\|_\infty$.
Thus,
\begin{align*}
 \frac{d}{dt} \int_{\mathbb T^d} 2\mathcal{E}- \frac{1}{\sigma}\mathcal{H} \, dx & \leq -\frac{2}{\sigma}\sum_\beta\|\Phi^\delta -\sigma h_s^\beta \|_{L^2_{\rhostat}}^2 \\
& + \frac{1}{\sigma} \sum_j \int_{\mathbb T^d}\bigg|\Phi^\delta(x-r^j) - \frac{1}{4}\sum_\beta W\ast \big(\Phi_\brho'\brho^\delta_\beta\big)(x)\bigg|\| \Phi^\delta-\sigma  \partial_s h^j  \|_{L^2_{\rhostat(x)}}(x-r^j) dx\\
& + \frac{2}{\sigma} \int_{\mathbb T^d} |\Phi^\delta| \sum_\beta \|h^\beta-1\|_{L^2_{\rhostat(x)}} \| \Phi^\delta -\sigma \partial_s h^\beta \|_{L^2_{\rhostat(x)}}dx \\
\leq & \, - \frac{2}{\sigma}\sum_j\|\Phi^\delta -\sigma h_s^j \|_{L^2_{\rhostat}}^2 + \frac{1}{\sigma}\sum_j\|\Phi^\delta -\sigma h_s^j \|_{L^2_{\rhostat}} \bigg( C_1\sum_{\beta} \|\brho_\beta^\delta\|_{L^2(\mathbb T^d)}^2 \\ 
& \qquad \qquad + C_2\sum_\beta |r^\beta+r^j|\|\brho_\beta^\delta\|_{L^2(\mathbb T^d)}  
+2\left(\int_{\mathbb T^d}|\Phi^\delta|^2 \|h^j-1\|_{L^2_{\rhostat(x)}}^2 dx \right)^\hf\bigg) \\
\leq & \,  - \frac{1}{\sigma}\sum_j\|\Phi^\delta -\sigma h_s^j \|_{L^2_{\rhostat}} \sum_\beta \bigg(\frac{1}{2}\|\Phi^\delta -\sigma h_s^\beta \|_{L^2_{\rhostat}} -C_1 \|\brho_\beta^\delta\|_{L^2(\mathbb T^d)}^2 \\ 
& \qquad \qquad - C_2 |r^\beta+r^j|\|\brho_\beta^\delta\|_{L^2(\mathbb T^d)}  
-2\left(\int_{\mathbb T^d}|\Phi^\delta|^2 \|h^j-1\|_{L^2_{\rhostat(x)}}^2 dx \right)^\hf\bigg).
\end{align*}
Note that the third term in the parenthesis might decay at the same order as the first term. Thus, to ensure that the right hand side of the inequality can be made strictly smaller than zero for all $t\geq 0$ by enforcing a smallness assumption on the pertubation away from the stationary state $\rhostat$ as was done in the proof of Proposition \ref{prop:stab}, we need to ensure that
\begin{align*}
\sum_\beta \left(\frac{1}{2}\|\Phi^\delta -\sigma h_s^\beta \|_{L^2_{\rhostat}} - C_2 |r^\beta+r^j|\|\brho_\beta^\delta\|_{L^2(\mathbb T^d)}\right) \geq \xi \sum_\beta \|\Phi^\delta -\sigma h_s^\beta \|_{L^2_{\rhostat}},
\end{align*}
for some constant $\xi>0$,
which holds under assumption \eqref{eq:shiftcondition}. This can be checked by applying the Poincar\'e inequality \eqref{eq:poincare} to the first term and then estimates corresponding to \eqref{eq:APhisquared}.
The remaining steps now follow by taking extra care of the different orientation preferences, doing a change of variables as in the proof of Proposition \ref{prop:stab} and following the remaining arguments therein.
\end{proof}

\subsection*{Acknowledgements}
JAC and PR were supported by the Advanced Grant Nonlocal-CPD (Nonlocal PDEs for Complex Particle Dynamics: Phase Transitions, Patterns and Synchronization) of the European Research Council Executive Agency (ERC) under the European Union's Horizon 2020 research and innovation programme (grant agreement No. 883363).
JAC was also supported by the grants EP/T022132/1 and EP/V051121/1 of the Engineering and Physical Sciences Research Council (EPSRC, UK).
Parts of this research was conducted while SS was an academic visitor at the Mathematical Institute, University of Oxford, and the author would like to thank the institution for its warm hospitality. The authors would like to thank the Isaac Newton Institute for Mathematical Sciences, University of Cambridge, for support and hospitality during the programme ``Frontiers in kinetic theory'' where parts of the work on this paper was undertaken, supported by EPSRC grant no EP/R014604/1.

\bibliographystyle{abbrv}
\bibliography{Biblio.bib}

\begin{thebibliography}{10}

\bibitem{A}
S.-I. Amari.
\newblock Neural theory of association and concept-formation.
\newblock {\em Biological cybernetics}, 26(3):175--185, 1977.

\bibitem{BGL2014}
D.~Bakry, I.~Gentil, and M.~Ledoux.
\newblock {\em Analysis and Geometry of Markov Diffusion Operators}.
\newblock Springer-Verlag Berlin, 2014.

\bibitem{B}
P.~C. Bressloff.
\newblock Spatiotemporal dynamics of continuum neural fields.
\newblock {\em Journal of Physics A: Mathematical and Theoretical},
  45(3):033001, 2011.

\bibitem{B2}
P.~C. Bressloff.
\newblock Stochastic neural field theory of wandering bumps on a sphere.
\newblock {\em Physica D: Nonlinear Phenomena}, 399:138--152, 2019.

\bibitem{BK}
P.~C. Bressloff and Z.~P. Kilpatrick.
\newblock Nonlinear langevin equations for wandering patterns in stochastic
  neural fields.
\newblock {\em SIAM Journal on Applied Dynamical Systems}, 14(1):305--334,
  2015.

\bibitem{CCP}
M.~J. C{\'a}ceres, J.~A. Carrillo, and B.~Perthame.
\newblock Analysis of nonlinear noisy integrate \& fire neuron models: blow-up
  and steady states.
\newblock {\em The Journal of Mathematical Neuroscience}, 1(1):1--33, 2011.

\bibitem{CRSS}
M.~J. C{\'a}ceres, P.~Roux, D.~Salort, and R.~Schneider.
\newblock Global-in-time solutions and qualitative properties for the {NNLIF}
  neuron model with synaptic delay.
\newblock {\em Communications in Partial Differential Equations},
  44(12):1358--1386, 2019.

\bibitem{CCS}
J.~A. Carrillo, A.~Clini, and S.~Solem.
\newblock The mean field limit of stochastic differential equation systems
  modelling grid cells.
\newblock {\em To appear in SIAM Journal on Mathematical Analysis; arXiv
  preprint arXiv:2112.06213}, 2022.

\bibitem{CGGS}
J.~A. Carrillo, M.~d.~M. Gonz\'alez, M.~P. Gualdani, and M.~E. Schonbek.
\newblock Classical solutions for a nonlinear {F}okker--{P}lanck equation
  arising in computational neuroscience.
\newblock {\em Comm. in Partial Differential Equations}, 38(3):385--409, 2013.

\bibitem{CGPS20}
J.~A. Carrillo, R.~S. Gvalani, G.~A. Pavliotis, and A.~Schlichting.
\newblock Long-time behaviour and phase transitions for the
  {M}c{K}ean--{V}lasov equation on the torus.
\newblock {\em Arch. Ration. Mech. Anal.}, 235(1):635--690, 2020.

\bibitem{CHS}
J.~A. Carrillo, H.~Holden, and S.~{Solem}.
\newblock {Noise-driven bifurcations in a neural field system modeling networks
  of grid cells}.
\newblock {\em J. Math. Biol.}, 85(42), 2022.

\bibitem{CPSS}
J.~A. Carrillo, B.~Perthame, D.~Salort, and D.~Smets.
\newblock Qualitative properties of solutions for the noisy integrate and fire
  model in computational neuroscience.
\newblock {\em Nonlinearity}, 28(9):3365, 2015.

\bibitem{CRS}
J.~A. Carrillo, P.~Roux, and S.~Solem.
\newblock Noise-driven bifurcations in a nonlinear {F}okker-{P}lanck system
  describing stochastic neural fields.
\newblock {\em Phys. D}, 449:Paper No. 133736, 20, 2023.

\bibitem{CEK}
H.~L. Cihak, T.~L. Eissa, and Z.~P. Kilpatrick.
\newblock Distinct excitatory and inhibitory bump wandering in a stochastic
  neural field.
\newblock {\em SIAM Journal on Applied Dynamical Systems}, 21(4):2579--2609,
  2022.

\bibitem{C2}
A.~Clini.
\newblock On the fluctuations of an {SDE} system modelling grid cells.
\newblock {\em arXiv preprint arXiv:2302.05764}, 2023.

\bibitem{C}
S.~Coombes.
\newblock Waves, bumps, and patterns in neural field theories.
\newblock {\em Biological cybernetics}, 93:91--108, 2005.

\bibitem{god}
A.~Friedman.
\newblock {\em Partial differential equations of parabolic type}.
\newblock Prentice-Hall, Inc., Englewood Cliffs, N.J., 1964.

\bibitem{HFMMM}
T.~Hafting, M.~Fyhn, S.~Molden, M.-B. Moser, and E.~I. Moser.
\newblock Microstructure of a spatial map in the entorhinal cortex.
\newblock {\em Nature}, 436(7052):801--806, 2005.

\bibitem{KE}
Z.~P. Kilpatrick and B.~Ermentrout.
\newblock Wandering bumps in stochastic neural fields.
\newblock {\em SIAM Journal on Applied Dynamical Systems}, 12(1):61--94, 2013.

\bibitem{KS}
J.~Kruger and W.~Stannat.
\newblock Front propagation in stochastic neural fields: a rigorous
  mathematical framework.
\newblock {\em SIAM Journal on Applied Dynamical Systems}, 13(3):1293--1310,
  2014.

\bibitem{KA}
S.~Kubota and K.~Aihara.
\newblock Analyzing global dynamics of a neural field model.
\newblock {\em Neural Processing Letters}, 21(2):133--141, 2005.

\bibitem{KT}
C.~Kuehn and J.~M. T{\"o}lle.
\newblock A gradient flow formulation for the stochastic amari neural field
  model.
\newblock {\em Journal of mathematical biology}, 79(4):1227--1252, 2019.

\bibitem{MB}
J.~N. MacLaurin and P.~C. Bressloff.
\newblock Wandering bumps in a stochastic neural field: A variational approach.
\newblock {\em Physica D: Nonlinear Phenomena}, 406:132403, 2020.

\bibitem{MMB}
E.~I. Moser, M.-B. Moser, and B.~L. McNaughton.
\newblock Spatial representation in the hippocampal formation: a history.
\newblock {\em Nature neuroscience}, 20(11):1448--1464, 2017.

\bibitem{Mu72}
B.~Muckenhoupt.
\newblock Hardy's inequality with weights.
\newblock {\em Studia Math.}, 44:31--38, 1972.

\bibitem{RD}
E.~T. Rolls and G.~Deco.
\newblock The noisy brain: stochastic dynamics as a principle of brain
  function.
\newblock 2010.

\bibitem{RBI17}
O.~Roustant, F.~Barthe, and B.~Iooss.
\newblock Poincar\'{e} inequalities on intervals---application to sensitivity
  analysis.
\newblock {\em Electron. J. Stat.}, 11(2):3081--3119, 2017.

\bibitem{RYMM}
D.~C. Rowland, Y.~Roudi, M.-B. Moser, and E.~I. Moser.
\newblock Ten years of grid cells.
\newblock {\em Annual review of neuroscience}, 39:19--40, 2016.

\bibitem{TWPC}
J.~R. Terry, W.~Woldman, A.~D. Peterson, and B.~J. Cook.
\newblock Neural field models: A mathematical overview and unifying framework.
\newblock {\em Mathematical Neuroscience and Applications}, 2, 2022.

\bibitem{WC1}
H.~R. Wilson and J.~D. Cowan.
\newblock Excitatory and inhibitory interactions in localized populations of
  model neurons.
\newblock {\em Biophysical journal}, 12(1):1--24, 1972.

\bibitem{WC2}
H.~R. Wilson and J.~D. Cowan.
\newblock A mathematical theory of the functional dynamics of cortical and
  thalamic nervous tissue.
\newblock {\em Kybernetik}, 13(2):55--80, 1973.

\end{thebibliography}

\end{document}